\newif\ifaistats
\aistatsfalse

\ifaistats
\documentclass[twoside]{article}

\usepackage{natbib}

\usepackage[accepted]{aistats2021}
\else
\documentclass{article}
\fi
\usepackage{amsmath}
\ifaistats
\usepackage{balance}
\else
\usepackage{times}
\fi
\usepackage{slashbox}
\usepackage{amssymb}
\usepackage{amsthm}
\usepackage{thmtools}
\usepackage{thm-restate}
\usepackage{bm}
\usepackage{graphicx}
\usepackage{caption}
\ifaistats
\else
\usepackage[margin = 1in]{geometry}
\usepackage[sort,comma,authoryear,round]{natbib}
\fi
\usepackage{color}
\usepackage{subfigure}
\usepackage{comment}
\usepackage{enumitem}
\usepackage[colorlinks,urlcolor=blue,citecolor=blue,linkcolor=blue]{hyperref}
\usepackage[colorinlistoftodos]{todonotes}

\newcommand{\innp}[1]{\left\langle #1 \right\rangle}

\newcommand{\zeros}{\textbf{0}}
\newcommand{\vx}{\mathbf{x}}
\newcommand{\tF}{\Tilde{F}}

\newcommand{\cu}{\mathcal{U}}
\newcommand{\cf}{\mathcal{F}}
\newcommand{\cfb}{\bar{\mathcal{F}}}

\newcommand{\vy}{\mathbf{y}}
\newcommand{\vz}{\mathbf{z}}
\newcommand{\vv}{\mathbf{v}}

\newcommand{\vw}{\mathbf{w}}

\newcommand{\vg}{\mathbf{g}}
\newcommand{\vu}{\mathbf{u}}
\newcommand{\vub}{\bar{\mathbf{u}}}

\newcommand{\veta}{\bm{\eta}}
\newcommand{\vetab}{\bar{\bm{\eta}}}

\newcommand{\defeq}{\stackrel{\mathrm{\scriptscriptstyle def}}{=}}

\newcommand{\rr}{\mathbb{R}}
\newcommand{\ee}{\mathbb{E}}

\graphicspath{{imgs/}}

\makeatletter
\def\mathcolor#1#{\@mathcolor{#1}}
\def\@mathcolor#1#2#3{%
  \protect\leavevmode
  \begingroup
    \color#1{#2}#3%
  \endgroup
}
\makeatother

\newcommand*{\vsepfbox}[1]{%
  \begingroup
    \sbox0{\fbox{#1}}%
    \setlength{\fboxrule}{0pt}%
    \mbox{\kern-\fboxsep\fbox{\unhbox0}\kern-\fboxsep}%
  \endgroup
}

\theoremstyle{plain} \numberwithin{equation}{section}
\newtheorem{theorem}{Theorem}[section]
\numberwithin{theorem}{section}

\newtheorem{lemma}[theorem]{Lemma}

\newtheorem{fact}[theorem]{Fact}
\theoremstyle{definition}
\newtheorem{definition}[theorem]{Definition}

\newtheorem{remark}[theorem]{Remark}

\newtheorem{assumption}{Assumption}

\DeclareMathOperator*{\argmin}{argmin}

\DeclareMathOperator*{\argsup}{argsup}

\makeatletter
\newcommand{\subalign}[1]{%
  \vcenter{%
    \Let@ \restore@math@cr \default@tag
    \baselineskip\fontdimen10 \scriptfont\tw@
    \advance\baselineskip\fontdimen12 \scriptfont\tw@
    \lineskip\thr@@\fontdimen8 \scriptfont\thr@@
    \lineskiplimit\lineskip
    \ialign{\hfil$\m@th\scriptstyle##$&$\m@th\scriptstyle{}##$\hfil\crcr
      #1\crcr
    }%
  }%
}
\makeatother

\newcommand{\eg}{\textsc{eg}}
\newcommand{\egp}{\textsc{eg}$+$}
\newcommand{\egpp}{\textsc{eg}$_p+$}

\renewcommand{\cite}[1]{\citep{#1}}

\ifaistats

\setlength{\pdfpageheight}{11in}
\setlength{\pdfpagewidth}{8.5in}
\fi

\ifaistats
\else
\title{Efficient Methods for Structured\\  Nonconvex-Nonconcave Min-Max Optimization}
\author{Jelena Diakonikolas\\
University of Wisconsin-Madison\\
\texttt{jelena@cs.wisc.edu}
\and
Constantinos Daskalakis\\
Massachusetts Institute of Technology\\
\texttt{costis@csail.mit.edu}
\and
Michael I.~Jordan\\
UC Berkeley\\
\texttt{jordan@cs.berkeley.edu}}
\date{}
\fi

\begin{document}

\ifaistats
\twocolumn[

\aistatstitle{Efficient Methods for Structured   Nonconvex-Nonconcave Min-Max Optimization}

\aistatsauthor{Jelena Diakonikolas \And Constantinos Daskalakis \And  Michael I.~Jordan}

\aistatsaddress{UW-Madison \And  MIT \And UC Berkeley} ]
\fi

\ifaistats
\else
\maketitle
\fi
\begin{abstract}
    The use of min-max optimization
    in the adversarial training of deep neural network classifiers, and the training of generative adversarial networks has motivated the study of nonconvex-nonconcave optimization objectives, which frequently arise in these applications. Unfortunately, recent results have established that even approximate first-order stationary points of such objectives 
    are intractable, even under smoothness conditions, 
    motivating the study of min-max objectives with additional  structure. We introduce a new class of structured nonconvex-nonconcave min-max optimization problems, proposing a  generalization of the extragradient algorithm which provably converges to a stationary point. The algorithm applies not only to Euclidean spaces, but also to general $\ell_p$-normed finite-dimensional real vector spaces. We also discuss its stability under stochastic oracles and provide bounds on its sample complexity. Our iteration complexity and sample complexity bounds either match or improve the best known bounds for the same or less general nonconvex-nonconcave settings, such as those that satisfy variational coherence or in which a weak solution to the associated variational inequality problem is assumed to exist.
\end{abstract}

\section{Introduction}

Min-max optimization and min-max duality theory lie at the foundations of game theory and mathematical programming, and have found far-reaching applications across a range of disciplines, including complexity theory, statistics, control theory, and online learning theory. Most recently, min-max optimization has played an important role in machine learning, notably in the adversarial training of deep neural network classifiers and the training of generative deep neural network models. These recent applications have heightened the importance of solving min-max optimization problems with nonconvex-nonconcave objectives, taking the following general form:
\begin{align}\label{eq:min-max}
    \min_{\vx} \max_{\vy} f(\vx,\vy),
\end{align}
where $\vx$ and $\vy$ are real-valued vectors and $f$ is not (necessarily) convex in $\vx$ for all $\vy$ and/or not (necessarily) concave in $\vy$ for all $\vx$.  There may also be  constraints on $\vx$ and $\vy$, and in many applications $\vx$ and $\vy$ are high-dimensional vectors.

When the objective function is not convex-concave, von Neumann's celebrated min-max theorem fails to apply, and so do most standard optimization 
methods for  solving~\eqref{eq:min-max}. This has motivated several lines of investigation, which include extensions of the min-max theorem beyond convex-concave objectives (e.g.~Sion's theorem for quasiconvex-quasiconcave objectives), and the pursuit of computational procedures that target solutions to~\eqref{eq:min-max} even in the absence of a min-max theorem; see Section~\ref{sec:related work} for a review of recent work. Of course, without strong assumptions on~$f$, \eqref{eq:min-max} is an intractable problem,  at least as intractable as general nonconvex optimization. Thus, the literature has targeted locally optimal solutions, in the same spirit as the  targeting of local optima in non-convex optimization. Naturally, there are various notions of local optimality that have been studied in the literature.  Our focus here will be on the simplest such notion, namely first-order local optimality, for which, despite the apparent simplicity, many challenges arise~\cite{daskalakis2018limit,mazumdar2018convergence}. 

In contrast to classical optimization problems, where useful results can be obtained with very mild assumptions on the objective function, in min-max optimization it is necessary to impose non-trivial assumptions on $f$, even when the goal is only to compute locally optimal solutions. Indeed,~\citet{daskalakis2020complexity} establish intractability results in the constrained setting of the problem, wherein first-order locally optimal solutions are guaranteed to exist whenever the objective is smooth. Moreover, they show that even the computation of \emph{approximate} solutions is {\tt PPAD}-complete and, if the objective function is accessible through value-queries and gradient-queries,  exponentially many such queries are necessary (in particular, exponential in at least one of the following: the inverse approximation parameter, the smoothness constant of $f$, or the diameter of the constraint set). 

We expect similar intractability results to hold in the unconstrained case, which is the case considered in this paper, even when restricting to  smooth objectives that have a non-empty set of optimal solutions.\footnote{Note that these are stationary points of $f$ in this case.} Indeed, fixed-point complexity-based intractability results for the constrained case are typically extendable to the unconstrained case, by embedding the hard instances within an unbounded domain. 

Relatedly, we already know that the unconstrained Stampacchia variational inequality (SVI) problem for Lipschitz continuous operators $F:\rr^d \to \rr^d$---a problem which includes the unconstrained case of~\eqref{eq:min-max} by setting $F([\subalign{\vx\\ \vy}])=\big[\subalign{\nabla_{\vx}f(\vx, \vy)\\ - \nabla_{\vy}f(\vx, \vy)}\big]$---is computationally intractable, even when restricting to operators that have a non-empty set of SVI solutions.\footnote{We formally define the \emph{Stampacchia variational inequality} problem, \eqref{eq:SVI}, in Section~\ref{sec:prelims}.  We also define the harder \emph{Minty variational inequality} problem, \eqref{eq:MVI}, in the same section.} This is because: (i) $F$ is Lipschitz-continuous if and only if the operator $T(\vu) = \vu - F(\vu)$ is Lipschitz-continuous; (ii) for~$\epsilon \geq 0$, points $\vub \in \rr^d$ such that $\|F(\vub)\|_2 \leq \epsilon$ satisfy  $\|T(\vub) - \vub\|_2\leq \epsilon$, i.e.~they are
$\epsilon$-approximate fixed points of~$T$, and vice versa; and (iii) 
it is known that finding approximate fixed points of Lipschitz operators over $\mathbb{R}^d$ is {\tt PPAD}-hard, even when the operators are guaranteed to have fixed points~\cite{papadimitriou1994complexity}. Moreover, if we restrict  attention to algorithms that only make value queries to $T$ (i.e.~$F$, which corresponds to the type of access that all first-order algorithms have), the query complexity becomes exponential in the dimension~\cite{hirsch1989exponential}. Finally, by the equivalence of norms, these results extend to arbitrary $\ell_p$-normed finite dimensional real vector spaces. Of course, for these intractability results for SVI  to apply to the  nonconvex-nonconcave min-max problem~\eqref{eq:min-max}, one would need to prove that these complexity results extend to operators $F$ constructed from a smooth function $f$ by setting $F([\subalign{\vx\\ \vy}])=\big[\subalign{\nabla_{\vx}f(\vx, \vy)\\ - \nabla_{\vy}f(\vx, \vy)}\big]$.


\paragraph{Our contributions.} Given the aforedescribed intractability results, our goal is to identify structural properties that make it possible to solve min-max optimization problems with smooth objectives. Focusing on the unconstrained setting of~\eqref{eq:min-max}, we view it as a special case (obtained by considering the operator $F([\subalign{\vx\\ \vy}])=\big[\subalign{\nabla_{\vx}f(\vx, \vy)\\ - \nabla_{\vy}f(\vx, \vy)}\big]$) of the unconstrained variational inequality problem~\eqref{eq:SVI}, and consider instead this more general problem. We identify conditions for $F$ under which a generalized version of the extragradient method of~\citet{korpelevich1976extragradient}, which we propose, converges to a solution of~\eqref{eq:SVI} (or, in the special case of~\eqref{eq:min-max}, to a stationary point of~$f$) at a rate of $1/\sqrt{k}$ in the number of iterations $k$. Our condition, presented as Assumption~\ref{assmpt:cohypo}, postulates that there exists a solution to~\eqref{eq:SVI} that only violates the stronger~\eqref{eq:MVI} requirement in a controlled manner that we delineate. Our generalized extragradient method is based on an aggressive interpolation step, as specified by~\eqref{eq:mod-eg}, and our main convergence result is Theorem~\ref{thm:convergence-of-eg+}.  We additionally show, in Theorems~\ref{thm:egpp_deterministic} and~\ref{thm:egpp-stochastic}, that the algorithm converges in non-Euclidean settings, under {the stronger condition} that an~\eqref{eq:MVI} solution exists, or when we only have stochastic oracle access to $F$ (or, in the special case of~\eqref{eq:min-max}, to the gradient of $f$).

The condition on $F$ under which our main result applies is weaker than the assumption that a solution to~\eqref{eq:MVI} exists~\cite{zhou2017stochastic, mertikopoulos2018optimistic, Malitsky2019, song2020optimistic}, an assumption which is already satisfied by several interesting families of min-max objectives, including quasiconvex-concave families or starconvex-concave families. Our significantly weaker condition applies in particular to (min-max objectives $f$ with corresponding) operators $F$ that are negatively comonotone~\cite{bauschke2020generalized} or positively cohypomonotone~\cite{combettes2004proximal}. These conditions have been studied in the literature for at least a couple of decades, but only asymptotic convergence results were available prior to our work for computing solutions to~\eqref{eq:SVI}. In contrast, our rates are asymptotically identical to the rates that we would get under the stronger assumption that a solution to~\eqref{eq:MVI} exists, and sidestep the intractability results for~\eqref{eq:min-max} suggested by~\citet{daskalakis2020complexity} for general smooth objectives.


\subsection{Further Related Work} \label{sec:related work}

\begin{table*}[t!]
\centering
\caption{Comparison of iteration complexities required to find a point $\vx$ with $\|F(\vx)\|_{p^*}\leq \epsilon$ using deterministic algorithms, where $\epsilon >0,$ $F:\rr^d \to\rr^d$ is a Lipschitz operator satisfying Assumption~\ref{assmpt:cohypo} (Section~\ref{sec:prelims}) with $\rho \geq 0$. Parameter $p$ determines the $\ell_p$ setup, and $p^* = \frac{p}{p-1}$ is the exponent conjugate to $p.$ Only the dependence on $\epsilon$ and possibly the dimension $d$ is shown; the dependence on other problem parameters is comparable for all the results. $\widetilde{O}$ hides logarithmic factors. `---' indicates that the result does not exist/is not known.}
\begin{tabular}{|l||*{5}{c|}}
\hline \label{table:det-results}
\backslashbox{\bf Paper}{\bf Setup} & $\rho \in (0, \frac{1}{4L}),$ $p = 2$ & $\rho = 0,$ $p = 2$ & $\rho = 0,$ $p \in (1, 2)$ & $\rho = 0,$ $p > 2$\\
\hline
\cite{dang2015convergence} & --- & $O(\frac{1}{\epsilon^2})$ & $O(\frac{\mathrm{poly}(d^{{1}/{p}-{1}/{2}})}{\epsilon^2})$ & $O(\frac{\mathrm{poly}(d^{{1}/{2}-{1}/{p}})}{\epsilon^2})$\\
\hline
\cite{lin2018solving} & --- & $\widetilde{O}(\frac{1}{\epsilon^2})$ & --- & ---\\
\hline
\cite{song2020optimistic} & --- & $O(\frac{1}{\epsilon^2})$ & $O(\frac{1}{\epsilon^2})$ & --- \\
\hline
\textbf{This Paper} & $O(\frac{1}{\epsilon^2})$ & $O(\frac{1}{\epsilon^2})$ & $O(\frac{1}{\epsilon^2})$ & $O(\frac{1}{\epsilon^p})$\\
\hline
\end{tabular}
\end{table*}

A large number of recent works target identifying practical first-order, low-order, or efficient online learning methods for solving min-max optimization problems in a variety of settings, ranging from the well-behaved setting of convex-concave objectives to the challenging setting of nonconvex-nonconcave objectives. There has been substantial work for convex-concave and nonconvex-concave objectives, targeting the computation of min-max solutions to~\eqref{eq:min-max} or, respectively, stationary points of $f$ or $\Phi(\vx):=\max_{\vy} f(\vx,\vy)$. This work has focused on attaining improved convergence rates~\cite{kong2019accelerated, lin2019gradient, thekumparampil2019efficient, nouiehed2019solving, lu2019hybrid, zhao2019optimal, alkousa2019accelerated, AzizianMLG20, GolowichPDO, lin2020near, diakonikolas2020halpern} and/or obtaining last-iterate convergence guarantees~\cite{daskalakis2017training, daskalakis2018limit, mazumdar2018convergence, MertikopoulosPP18, lin2018solving, hamedani2018primal, adolphs2018local, daskalakis2019last, liang2019interaction, gidel2019negative, mokhtari2019unified, abernethy2019last,liu2019towards}. 

In the nonconvex-nonconcave setting, research has focused on identifying different
notions of local min-max solutions \cite{daskalakis2018limit,mazumdar2018convergence,jin2019minmax, MangoubiV20} and studying 
the existence and (local) convergence properties of learning methods to these
points \cite{wang2019solving, MangoubiSV20, MangoubiV20}. As already discussed, recent work of~\citet{daskalakis2020complexity} shows that, for general smooth objectives, the computation of even approximate first-order locally optimal min-max solutions is intractable, motivating the identification of structural assumptions on the objective function for which these intractability barriers can be bypassed. 

An example such assumption, which is closely related to the one made in this work, is that an~\eqref{eq:MVI} solution exists for the operator $F([\subalign{\vx\\ \vy}])=\big[\subalign{\nabla_{\vx}f(\vx, \vy)\\ - \nabla_{\vy}f(\vx, \vy)}\big]$, as studied by~\citet{zhou2017stochastic, lin2018solving, mertikopoulos2018optimistic, Malitsky2019, liu2019towards, song2020optimistic}. As we have already discussed, the assumption we make for our main result in this work is weaker. Table~\ref{table:det-results} provides a comparison of our results to those of existing works, considering the deterministic setting (i.e.~having exact value access to~$F$). 

In unconstrained Euclidean setups, the best known convergence rates are of the order $1/\sqrt{k}$~\cite{dang2015convergence,song2020optimistic}, under the assumption that an~\eqref{eq:MVI} solution exists. We obtain the same rate under our weaker Assumption~\ref{assmpt:cohypo}. Moreover, under our weaker assumption, we show that the accumulation points of the sequence of iterates of our algorithm are \eqref{eq:SVI} solutions. This was previously established for alternative algorithms and under the stronger assumption that an \eqref{eq:MVI} solution exists~\cite{mertikopoulos2018optimistic,Malitsky2019}. 

When it comes to more general $\ell_p$ norms, \citet{mertikopoulos2018optimistic} establish the asymptotic convergence of the iterates of an optimistic variant of the mirror descent algorithm, under the assumption that an~\eqref{eq:MVI} solution exists, but they do not provide any convergence rates. On the other hand, \citet{dang2015convergence} prove a $1/\sqrt{k}$ rate of convergence for a variant of the mirror-prox algorithm in general normed spaces. This result, however, 
requires the regularizing (prox) function to be both smooth and strongly convex w.r.t.~the same norm, and the constant in the convergence bound scales at least linearly with the condition number of the prox function. It is well-known that no function can be simultaneously smooth and strongly convex w.r.t.~an $\ell_p$ norm with $p\neq 2$ and have a condition number independent of the dimension~\cite{borwein2009uniformly}. In fact, unless $p$ is trivially close to $2,$ we only know of functions whose condition number would scale polynomially with the dimension.

Very recent (and independent) work of~\citet{song2020optimistic} proposes an optimistic dual extrapolation method with linear convergence for a class of problems that have a ``strong'' \eqref{eq:MVI} solution. (In particular, their assumption is that there exists $\vu^* \in \rr^d$ such that $\forall \vu \in \rr^d$: $\innp{F(\vu), \vu - \vu^*} \geq m\|\vu - \vu^*\|^2$ for some constant $m \geq 0$; the case $m=0$ recovers the existence of a standard \eqref{eq:MVI} solution.) Their result only applies to norms that are strongly convex, which in the case of $\ell_p$ norms is true only for $p \in (1, 2].$ In that case, our results match those of~\citet{song2020optimistic}. 
For the case of stochastic oracle access to $F,$ our bounds also match those of \citet{song2020optimistic} for $p \in (1, 2],$ and we also handle the case $p >2$ which is not covered by \citet{song2020optimistic}.

Finally, it is worth noting that~\citet{zhou2017stochastic, mertikopoulos2018optimistic, Malitsky2019, song2020optimistic} consider constrained optimization setups, which are not considered in our work. We believe that generalizing our results to constrained setups is possible, and defer such generalizations to future work.

\section{Notation and Preliminaries}\label{sec:prelims}

We consider real $d$-dimensional spaces $(\rr^d, \|\cdot\|_p),$ where $\|\cdot\|_p$ is the standard $\ell_p$ norm for $p \geq 1.$ In particular, $\|\cdot\|_2 = \sqrt{\innp{\cdot, \cdot}}$ is the $\ell_2$ (Euclidean) norm and $\innp{\cdot, \cdot}$ denotes the inner product. When the context is clear, we omit the subscript 2 and just write $\|\cdot\|$ for the Euclidean norm $\|\cdot\|_2.$ Moreover, we denote by $p^* = \frac{p}{p-1}$ the exponent conjugate to $p.$

We are interested in finding stationary points for min-max problems of the form:
\begin{equation}\tag{P}\label{problem:min-max}
    \min_{\vx \in \rr^{d_1}}\max_{\vy \in \rr^{d_2}} f(\vx, \vy),
\end{equation}
where $f$ is a smooth (possibly nonconvex-nonconcave) function and $d_1 + d_2 = d$. In this case, stationary points can be defined as the points at which the gradient of $f$ is the zero vector. As is standard, the $\epsilon$-approximate variant of this problem for $\epsilon > 0$ is to find a point $(\vx, \vy)\in \rr^{d_1}\times \rr^{d_2}$ such that $\|\nabla f(\vx, \vy)\|_{p^*} \leq \epsilon.$ 


We will study Problem~\eqref{problem:min-max} through the lens of variational inequalities, described in Section~\ref{sec:VIs}. To do so, we consider the operator $F: \rr^d \to \rr^d$ defined via $F(\vu) = \big[\subalign{\nabla_{\vx}f(\vx, \vy)\\ - \nabla_{\vy}f(\vx, \vy)}\big],$ where $\vu = \big[\subalign{\vx \\ \vy}\big]$ and where $\nabla_{\vx} f$ (respectively, $\nabla_{\vy} f$) denotes the gradient of $f$ w.r.t.~$\vx$ (respectively, $\vy$). It is clear that $F$ is Lipschitz-continuous whenever $f$ is smooth and that $\|F(\vu)\|_{p^*}\leq \epsilon$ for $\vu = \big[\subalign{\vx \\ \vy}\big]$ holds if and only if $\|\nabla f(\vx, \vy)\|_{p^*} \leq \epsilon$.

%
\subsection{Variational Inequalities and Structured (Possibly Non-Monotone) Operators} \label{sec:VIs}
Let $F: \rr^d \rightarrow \rr^d$ be an operator that is $L$-Lipschitz-continuous w.r.t.~$\|\cdot\|_p$:
\begin{equation}\label{eq:lipschitzness}\tag{\textsc{a}$_1$}
    (\forall \vu, \vv \in \rr^d):\quad \|F(\vu) - F(\vv)\|_{p^*} \leq L\|\vu - \vv\|_p. 
\end{equation}
$F$ is said to be \emph{monotone} if:
\begin{equation}\label{eq:mon-op}
    (\forall \vu, \vv \in \rr^d):\quad \innp{F(\vu) - F(\vv), \vu - \vv} \geq 0.
\end{equation}

Given a closed convex set $\cu\subseteq \rr^d$ and an operator $F,$ the \emph{Stampacchia Variational Inequality} problem consists in finding $\vu^* \in \rr^d$ such that:
\begin{equation}\label{eq:SVI}\tag{\textsc{svi}}
    (\forall \vu \in {\cu}): \quad \innp{F(\vu^*), \vu - \vu^*} \geq 0.
\end{equation}
In this case, $\vu^*$ is referred to as the \emph{strong solution} to the variational inequality corresponding to $F$ and $\cu$. When $\cu \equiv \rr^d$ (the case considered here), it must be the case that $\|F(\vu^*)\|_{p^*} = 0.$ We will assume that there exists at least one \eqref{eq:SVI} solution, and will denote the set of all such solutions by $\cu^*.$ 

The \emph{Minty Variational Inequality} problem consists in finding $\vu^*$ such that:
\begin{equation}\label{eq:MVI}\tag{\textsc{mvi}}
    (\forall \vu \in \cu): \quad \innp{F(\vu), \vu^* - \vu} \leq 0,
\end{equation}
in which case $\vu^*$ is referred to as the \emph{weak solution} to the variational inequality corresponding to $F$ and $\cu$.   
If we assume that $F$ is monotone, then~\eqref{eq:mon-op} implies that every solution to~\eqref{eq:SVI} is also a solution to \eqref{eq:MVI}, and the two solution sets are equivalent. More generally, if $F$ is \emph{not} monotone, all that can be said is that the set of \eqref{eq:MVI} solutions is a subset of the set of \eqref{eq:SVI} solutions. In particular, \eqref{eq:MVI} solutions may not exist even when \eqref{eq:SVI} solutions exist. These facts follow from Minty's theorem (see, e.g.,~\citep[Chapter 3]{kinderlehrer2000introduction}).

We will \emph{not}, in general, be assuming that $F$ is monotone. Note that the Lipschitzness of $F$ on its own is not sufficient to guarantee that the problem is computationally tractable, as discussed in the introduction. Thus, additional structure is needed, which we introduce in the following. 

\paragraph{Weak {MVI} solutions.}

We define the class of problems with weak \eqref{eq:MVI} solutions as the class of problems in which $F$ satisfies the following assumption.

\begin{assumption}[Weak \textsc{mvi}]\label{assmpt:cohypo}
There exists $\vu^* \in \cu^*$ such that:
\begin{equation}\label{eq:assmpt-ncvx}\tag{\textsc{a}$_2$}
    (\forall \vu \in \rr^d):\quad 
    \innp{F(\vu), \vu - \vu^*} \geq -\frac{\rho}{2} \|F(\vu)\|_{p^*}^2,
\end{equation}
for some parameter $\rho \in \big[0, \frac{1}{4L}\big)$.   
\end{assumption}

We will only provide results for $\rho > 0$ in the case of the $\ell_2$ norm. For $p \neq 2,$ we will require a stronger assumption; namely, that an~\eqref{eq:MVI} solution exists, which holds when $\rho = 0$. 

\subsection{Example Settings Satisfying Assumption~\ref{assmpt:cohypo}}

The class of problems that have weak \eqref{eq:MVI} solutions in the sense of Assumption~\ref{assmpt:cohypo} generalizes other structured non-monotone variational inequality problems, as we discuss in this section.

When $\rho = 0,$ we recover the class of problems that have an~\eqref{eq:MVI} solution. This class  contains all unconstrained variationally coherent problems studied in, e.g.,~\citet{zhou2017stochastic,mertikopoulos2018optimistic}, which encompass all min-max problems with objectives that are bilinear, pseudo-convex-concave, quasi-convex-concave, and star-convex-concave. 

When $\rho > 0$ and $p=2$, Assumption~\ref{assmpt:cohypo} is implied by $F$ being $-\frac{\rho}{2}$-comonotone~\cite{bauschke2020generalized} or $\frac{\rho}{2}$-cohypomonotone~\cite{combettes2004proximal}, defined as follows:
\begin{align}
     &(\forall \vu, \vv \in \rr^d):\notag\\ &~\innp{F(\vu) - F(\vv), \vu - \vv} \geq - \frac{\rho}{2}\|F(\vu) - F(\vv)\|_2^2. \label{eq:cohypomonotone}
\end{align}
In particular, Assumption~\ref{assmpt:cohypo} is equivalent to  requiring that~\eqref{eq:cohypomonotone}  be satisfied for general $\vu$ and $\vv = \vu^*$, where $\vu^*$ is a solution to~\eqref{eq:SVI} (in which case $F(\vu^*) = \zeros$). Note that Assumption~\ref{assmpt:cohypo} does \emph{not} imply that a solution to~\eqref{eq:MVI} exists, unless $\rho=0$. It is further important to note that cohypomonotone operators arise as inverses of operators that only need to be Lipschitz-continuous. (In fact, even a weaker property suffices; see~\citet{bauschke2020generalized}.) This is particularly interesting as, combined with our main result, it implies that we can efficiently find zeros of inverses of Lipschitz-continuous operators, as long as those inverses are sufficiently Lipschitz, even though finding zeros of Lipschitz-continuous operators is computationally intractable, in general, as we have discussed.

It is interesting to note that Assumption~\ref{assmpt:cohypo} does not imply that, in the min-max setting, $f$ is convex-concave (or, more generally, that $F$ is monotone), even in a neighborhood of an~\eqref{eq:SVI} solution $\vu^* = \big[\subalign{\vx^* \\ \vy^*}\big]$, i.e., a stationary point of $f$. To see this, fix $\vy = \vy^*$ and consider $f(\vx, \vy^*)$ for $\vx$ in a small neighborhood of $\vx^*.$ Using the fact that a continuously-differentiable function is well-approximated by its linear approximation within small neighborhoods, all that we are able to deduce from Assumption~\ref{assmpt:cohypo} is that
\ifaistats
\begin{align*}
    f(\vx^*, \vy^*) - f(\vx, \vy^*) &\approx \innp{\Big[\subalign{\nabla_{\vx} f(\vx, \vy^*)\\ \nabla_{\vy} f(\vx, \vy^*)}\Big], \Big[\subalign{&\vx^* - \vx\\ &\vy^* - \vy^*}\Big]}\\
    &\leq \frac{\rho}{2}\|\nabla f(\vx, \vy^*)\|_{p^*}^2. 
\end{align*}
\else
\begin{align*}
    f(\vx^*, \vy^*) - f(\vx, \vy^*) \approx \innp{\Big[\subalign{\nabla_{\vx} f(\vx, \vy^*)\\ \nabla_{\vy} f(\vx, \vy^*)}\Big], \Big[\subalign{&\vx^* - \vx\\ &\vy^* - \vy^*}\Big]} \leq \frac{\rho}{2}\|\nabla f(\vx, \vy^*)\|_{p^*}^2. 
\end{align*}
\fi
In particular, Assumption~\ref{assmpt:cohypo} does not preclude that $f(\vx^*, \vy^*)$ is larger than $f(\vx, \vy^*)$; it only bounds how much larger it can be by a quantity proportional to $\|\nabla f(\vx, \vy^*)\|_{p^*}^2$. Compare this also to the Polyak-{\L}ojasiewicz condition (see, e.g., \citet{nouiehed2019solving,yang2020global}), which imposes the opposite inequality, namely, that $f(\vx, \vy^*) - f(\vx^*, \vy^*)$ is bounded {\em above} by a multiple of $\|\nabla f(\vx, \vy^*)\|_{p^*}^2$. 

One  way that a generic operator $F$ may satisfy Assumption~\ref{assmpt:cohypo} is when there is a constant $\lambda > 0$ such that for some $\vu^* \in \cu^*$ we have
\begin{align}
    (\forall \vu \in \rr^d) \innp{F(\vu), \vu - \vu^*} \geq - \frac{\lambda}{2}\|\vu - \vu^*\|_p^2, \label{eq:costis}
\end{align}
and when the operator $F$ does not plateau or become too close to a linear operator around $\vu^*;$ namely, $\|F(\vu) - F(\vu^*)\|_{p^*} \geq \mu \|\vu - \vu^*\|_{p}$. (Note that~\eqref{eq:costis} is always satisfied with $\lambda = 2L$ for $L$-Lipschitz operators, but we may need $\lambda$ to be smaller than $2L$).  Then Assumption~\ref{assmpt:cohypo} would be satisfied with $\rho = \frac{\lambda}{\mu}.$ For a min-max problem, assuming $f$ is twice differentiable, this would mean that the lowest eigenvalue of the symmetric part of the Jacobian of $\big[\subalign{\nabla_{\vx}f(\vx, \vy)\\ - \nabla_{\vy}f(\vx, \vy)}\big]$ is bounded below by $-\lambda/2$ in any direction $\vu - \vu^*$ and the function $f$ is sufficiently ``curved'' (not close to a linear or a constant function) around $\vu^* = \big[\subalign{\vx^* \\ \vy^*}\big].$

Finally, we discuss a concrete min-max application wherein there are no~\eqref{eq:MVI} solutions, but there do exist \eqref{eq:SVI} solutions satisfying the weak~\eqref{eq:MVI} condition of Assumption~\ref{assmpt:cohypo}. This application arises in the context of two-agent zero-sum reinforcement learning problems studied by many authors, including recently by~\citet{daskalakis2021independent}. In Section 5.1 of that work, the authors consider a special case of the general two-agent zero-sum RL problem, called von Neumann’s ratio game, for which they observe that, even on a random example, the \eqref{eq:MVI} solution set is empty, yet the extragradient method still converges in practice (albeit at a slower rate). Interestingly, it is easy to construct examples of the von Neumann ratio game for which no~\eqref{eq:MVI} solution exists, but the {weak}~\eqref{eq:MVI} condition of Assumption~\ref{assmpt:cohypo} does hold, and yet the stronger cohypomonotonicity condition of~\eqref{eq:cohypomonotone} does not hold. Indeed, one such example is obtained for the game shown in Proposition 2 of their paper, setting $s=1/2$ and $\epsilon=.49$. Here~\eqref{eq:MVI} fails, the weak~\eqref{eq:MVI} condition of Assumption~\ref{assmpt:cohypo} is satisfied, and cohypomonotonicity fails to hold, e.g., for $\vu=(\vx,\vy)=(0.1,0.3)$  and $\vv=(\vx’,\vy’)=(0.8,0.3)$.  To be clear, the von Neumann ratio game gives rise to a constrained min-max problem while our algorithm targets the unconstrained setting. While extending our result to the constrained setting remains open, our example here demonstrates that there is value in further studying the weak~\eqref{eq:MVI}  condition of Assumption~\ref{assmpt:cohypo} in the constrained setting as well. 
\subsection{Useful Definitions and Facts}

We now list some useful definitions and facts that will subsequently be used in our analysis. 
\ifaistats
Additional background, including proofs of Propositions~\ref{prop:step-to-grad} and \ref{prop:smooth-ub} is provided in Appendix~\ref{appx:background}.
\else
We start with a presentation of uniformly convex functions, convex conjugates, and Bregman divergence, and then specialize these basic facts to the $\ell_p$ setups used in Section~\ref{sec:extensions}.
\fi

\begin{definition}[Uniform convexity]\label{def:unif-cvxty}
Given $p \geq 2$, a differentiable function $\psi: \rr^d \to \rr \cup \{+ \infty\}$ is said to be $p$-uniformly convex w.r.t.~$\|\cdot\|$ and with constant $m$ if $\forall \vx, \vy \in \rr^d,$
$$
    \quad \psi(\vy) \geq \psi(\vx) + \innp{\nabla \psi(\vx), \vy - \vx} + \frac{m}{p}\|\vy - \vx\|^p.
$$
\end{definition}
Observe that when $p = 2,$ we recover the standard definition of strong convexity. Thus, uniform convexity is a generalization of strong convexity.

\ifaistats
\else
\begin{definition}[Convex conjugate]\label{def:cvx-conj}
Given a convex function $\psi: \rr^d \to \rr \cup \{+\infty\},$ its convex conjugate $\psi^*$ is defined by:
$$
    (\forall \vz \in \rr^d):\quad \psi^*(\vz) = \sup_{\vx \in \rr^d}\{\innp{\vz, \vx} - \psi(\vx)\}.
$$
\end{definition}

The following standard fact can be derived using the Fenchel-Young inequality, which states that $\forall \vx, \vz \in \rr^d: \psi(\vx) + \psi^*(\vz) \geq \innp{\vz, \vx},$ and it is a simple corollary of Danskin's theorem (see, e.g.,~\cite{bertsekas1971control,Bertsekas2003}).

\begin{fact}\label{fact:danskin}
Let $\psi: \rr^d \to \rr \cup \{+\infty\}$ be a closed convex proper function and let $\psi^*$ be its convex conjugate. Then,  $\forall \vg \in \partial \psi^*(\vz),$
$$
     \vg \in \argsup_{\vx \in \rr^d}\{\innp{\vz, \vx} - \psi(\vx)\},
$$
where $\partial \psi^*(\vz)$ is the subdifferential set (the set of all subgradients) of $\psi^*$ at point $\vz$. In particular, if $\psi^*$ is differentiable, then $\argsup_{\vx \in \rr^d}\{\innp{\vz, \vx} - \psi(\vx)\}$ is a singleton set and $\nabla \psi^*(\vz)$ is its only element.
\end{fact}
\fi

\begin{definition}[Bregman divergence]
Let $\psi:\rr^d \to \rr$ be a differentiable function. Then its Bregman divergence between points $\vx, \vy \in \rr^d$ is defined by
$$
    D_{\psi}(\vx, \vy) = \psi(\vx)- \psi(\vy) - \innp{\nabla \psi(\vy), \vx - \vy}.
$$
\end{definition}
\noindent
It is immediate that the Bregman divergence of a convex function is non-negative.

\paragraph{Useful facts for $\ell_p$ setups.} We now outline some useful auxiliary results used specifically in Section~\ref{sec:extensions}, where we study the case that $p$ is not necessarily equal to 2.

\begin{restatable}{proposition}{propsteptograd}\label{prop:step-to-grad}
Given, $\vz, \vu \in \rr^d$, $p \in (1, \infty)$ and $q \in \{p, 2\}$, let
$$
    \vw = \argmin_{\vv \in \rr^d} \Big\{\innp{\vz, \vv} + \frac{1}{q}\|\vu - \vv\|_p^q\Big\}.
$$
Then, for $p^* = \frac{p}{p-1},$ $q^* = \frac{q}{q-1}$:
$$
    \vw = \vu - \nabla \Big(\frac{1}{q^*}\|\vz\|_{p^*}^{q^*}\Big) \quad\text{ and } \quad \frac{1}{q}\|\vw - \vu\|_p^q = \frac{1}{q}\|\vz\|_{p^*}^{q^*}. 
$$
\end{restatable}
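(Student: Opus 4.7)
\medskip

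\noindent\textbf{Proof plan.} The plan is to reduce the stated minimization to a convex-conjugate identity and then read off both conclusions from that identity together with an explicit norm computation.

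First I would translate variables by setting $\vs = \vv - \vu$. The objective becomes $\innp{\vz, \vu} + \innp{\vz, \vs} + \frac{1}{q}\|\vs\|_p^q$, and the minimizer $\vs^\star = \vw - \vu$ is the argmax of $\innp{-\vz, \vs} - \frac{1}{q}\|\vs\|_p^q$. Thus, if one defines $\phi(\vs) = \frac{1}{q}\|\vs\|_p^q$, the problem is exactly evaluating (the maximizer in) the convex conjugate $\phi^*$ at $-\vz$. The standard identity gives $\phi^*(\vy) = \frac{1}{q^*}\|\vy\|_{p^*}^{q^*}$ for $q,p\in(1,\infty)$ with $\frac{1}{q}+\frac{1}{q^*}=1$, $\frac{1}{p}+\frac{1}{p^*}=1$; I would briefly justify this by noting that the supremum defining the conjugate is attained, and optimality of a radially symmetric ansatz reduces the computation to a one-dimensional Young-type equality. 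Since $\phi^*$ is even, applying Fact~\ref{fact:danskin} (Danskin's theorem) to $\phi^*(-\vz)$ yields $\vs^\star = -\nabla\bigl(\frac{1}{q^*}\|\vz\|_{p^*}^{q^*}\bigr)$, which is exactly the first claim $\vw = \vu - \nabla\bigl(\frac{1}{q^*}\|\vz\|_{p^*}^{q^*}\bigr)$.

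Next I would establish the energy identity $\frac{1}{q}\|\vw-\vu\|_p^q = \frac{1}{q}\|\vz\|_{p^*}^{q^*}$. Using the chain rule, $\nabla\bigl(\frac{1}{q^*}\|\vz\|_{p^*}^{q^*}\bigr) = \|\vz\|_{p^*}^{q^*-p^*} \cdot \bigl(\mathrm{sign}(z_i)|z_i|^{p^*-1}\bigr)_i$, and taking its $\ell_p$ norm reduces to the scalar identity $(p^*-1)p = p^*$ (equivalent to conjugacy of $p$ and $p^*$). This gives $\|\nabla\bigl(\frac{1}{q^*}\|\vz\|_{p^*}^{q^*}\bigr)\|_p = \|\vz\|_{p^*}^{q^*-1}$. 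Raising both sides to the $q$-th power and using $(q^*-1)q = q^*$ (conjugacy of $q$ and $q^*$) yields the claimed $\|\vw-\vu\|_p^q = \|\vz\|_{p^*}^{q^*}$, and dividing by $q$ completes the proof. Note this computation is valid in both of the cases $q = p$ and $q = 2$ covered by the statement.

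The main obstacle, if any, is not conceptual but notational: keeping the conjugate exponents $p,p^*,q,q^*$ straight and invoking the correct conjugacy identities $(p^*-1)p = p^*$ and $(q^*-1)q = q^*$ at the right moment. A verification alternative that avoids the chain-rule calculation is to use the Fenchel--Young equality at optimality, $\phi(\vs^\star) + \phi^*(-\vz) = \innp{-\vz,\vs^\star}$, together with the first-order optimality condition $-\vz = \nabla\phi(\vs^\star)$; the homogeneity identity $\innp{\nabla\phi(\vs^\star),\vs^\star} = q\,\phi(\vs^\star)$ then yields $\phi(\vs^\star) = \phi^*(-\vz)\cdot\frac{q^*}{q}\cdot\frac{1}{q^*-1}$, and after simplification $\frac{1}{q}\|\vs^\star\|_p^q = \frac{1}{q}\|\vz\|_{p^*}^{q^*}$, which I would include as a sanity check.
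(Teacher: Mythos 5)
Your proof is correct and takes essentially the same route as the paper: reduce the minimization to evaluating the convex conjugate of $\frac{1}{q}\|\cdot\|_p^q$ (whose conjugate is $\frac{1}{q^*}\|\cdot\|_{p^*}^{q^*}$) and invoke Fact~\ref{fact:danskin}; you simply make explicit the change of variables $\vs = \vv - \vu$, the sign via evenness of the conjugate, and the chain-rule computation of $\|\nabla(\tfrac{1}{q^*}\|\vz\|_{p^*}^{q^*})\|_p = \|\vz\|_{p^*}^{q^*-1}$, all of which the paper's terse proof leaves to the reader. One slip in your optional Fenchel--Young cross-check: combining $\phi(\vs^\star)+\phi^*(-\vz)=\innp{-\vz,\vs^\star}$ with $\innp{\nabla\phi(\vs^\star),\vs^\star}=q\phi(\vs^\star)$ gives $\phi(\vs^\star)=\tfrac{1}{q-1}\phi^*(-\vz)$, not $\phi^*(-\vz)\cdot\tfrac{q^*}{q}\cdot\tfrac{1}{q^*-1}$ (the latter equals $\phi^*(-\vz)$, which is only right at $q=2$); the corrected relation still yields $\tfrac{1}{q}\|\vs^\star\|_p^q=\tfrac{1}{q}\|\vz\|_{p^*}^{q^*}$ since $\tfrac{1}{(q-1)q^*}=\tfrac{1}{q}$, so the sanity check is salvageable and the main argument is unaffected.
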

\ifaistats
\else
\begin{proof}
The statements in the proposition are simple corollaries of conjugacy of the functions $\psi(\vu) = \frac{1}{q}\|\vu\|_p^q$ and $\psi^*(\vz) = \frac{1}{q^{*}}\|\vz\|_{p^*}^{q^*}$. In particular, the first part follows from
$$
    \psi^*(\vz) = \sup_{\vv\in \rr^d}\{\innp{\vz, \vv} - \psi(\vv)\},
$$
by the definition of a convex conjugate and using that $ \frac{1}{q}\|\vu\|_p^q$ and $ \frac{1}{q^{*}}\|\vz\|_{p^*}^{q^*}$ are conjugates of each other, which are standard exercises in convex analysis for $q \in \{p, 2\}$ (see, e.g.,~\citep[Exercise 4.4.2]{borwein2004techniques} and \citep[Example 3.27]{boyd2004convex}). 

The second part follows by $\nabla \psi^*(\vz) = \arg\sup_{\vv\in \rr^d}\{\innp{\vz, \vv} - \psi(\vv)\},$ due to Fact~\ref{fact:danskin} ($\psi$ and $\psi^*$ are both continuously differentiable for $p \in (1, \infty)$). Lastly, $ \frac{1}{q}\|\vw - \vu\|_p^q = \frac{1}{q}\|\vz\|_{p^*}^{q^*}$ can be verified by setting $\vw = \vu - \nabla \big(\frac{1}{q^*}\|\vz\|_{p^*}^{q^*}\big).$
\end{proof}
\fi

Another useful result is the following proposition, which will allow us to relate Lipschitzness of $F$ to uniform convexity of the prox mapping $\frac{1}{q}\|\cdot\|_p^q$ in the definition of the algorithm. The ideas used in the proof can be found in the proofs of~\citep[Lemma 5.7]{d2018optimal},  \citep[Lemma 2]{nesterov2015universal}, and in \citep[Section 2.3]{devolder2014first}. \ifaistats\else The proof is provided for completeness.\fi
\begin{restatable}{proposition}{propsmoothub}\label{prop:smooth-ub}
For any $L > 0$, $\kappa > 0$, $q\geq \kappa,$ $t \geq 0$, and $\delta > 0$,
$$
    \frac{L}{\kappa}t^{\kappa} \leq \frac{\Lambda}{q}t^q + \frac{\delta}{2},
$$
where
\ifaistats
$
    \Lambda = \big(\frac{2(q-\kappa)}{\delta q \kappa}\big)^{\frac{q-\kappa}{\kappa}}L^{q/\kappa}.
$
\else
$$
    \Lambda = \Big(\frac{2(q-\kappa)}{\delta q \kappa}\Big)^{\frac{q-\kappa}{\kappa}}L^{q/\kappa}.
$$
\fi
\end{restatable}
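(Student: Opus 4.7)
The plan is to prove the bound via a sharp Young's inequality with a tuning parameter $\alpha$ that is subsequently chosen so that the constant term equals exactly $\delta/2$ and the coefficient of $t^q$ equals $\Lambda/q$. First, I would dispose of the degenerate cases: if $t=0$ the bound is trivial since the right-hand side is $\delta/2 > 0$; if $q=\kappa$, then by the convention $0^0 = 1$ we have $\Lambda = L$, and the claim reduces to $\frac{L}{\kappa}t^\kappa \leq \frac{L}{\kappa}t^q + \frac{\delta}{2}$, which is immediate. So assume $q > \kappa$ and $t > 0$.

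For the main case, I would write $t^\kappa = (\alpha t^\kappa)\cdot \alpha^{-1}$ for $\alpha > 0$ to be chosen and apply Young's inequality $ab \leq \frac{a^r}{r} + \frac{b^s}{s}$ with the conjugate exponents $r = q/\kappa > 1$ and $s = q/(q-\kappa) > 1$ (checking that $1/r+1/s = 1$). This yields
\begin{equation*}
  t^\kappa \;\leq\; \frac{\kappa}{q}\,\alpha^{q/\kappa}\,t^q \;+\; \frac{q-\kappa}{q}\,\alpha^{-q/(q-\kappa)},
\end{equation*}
and multiplying through by $L/\kappa$ gives
\begin{equation*}
  \frac{L}{\kappa}\,t^\kappa \;\leq\; \frac{L}{q}\,\alpha^{q/\kappa}\,t^q \;+\; \frac{L(q-\kappa)}{q\kappa}\,\alpha^{-q/(q-\kappa)}.
\end{equation*}

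Finally, I would pick $\alpha$ to force the constant term to equal $\delta/2$, i.e.\ solve $\frac{L(q-\kappa)}{q\kappa}\alpha^{-q/(q-\kappa)} = \delta/2$ to get $\alpha = \bigl(\tfrac{2L(q-\kappa)}{\delta q\kappa}\bigr)^{(q-\kappa)/q}$. Substituting into the coefficient of $t^q$ yields
\begin{equation*}
  \frac{L}{q}\alpha^{q/\kappa} \;=\; \frac{1}{q}\,L\cdot L^{(q-\kappa)/\kappa}\left(\frac{2(q-\kappa)}{\delta q\kappa}\right)^{(q-\kappa)/\kappa} \;=\; \frac{\Lambda}{q},
\end{equation*}
which is exactly the stated $\Lambda$. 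There is no real analytic obstacle here; the only thing to be careful about is the bookkeeping of exponents in the last step and the convention $0^0 = 1$ used to handle $q = \kappa$ so that $\Lambda$ is well-defined at the boundary.
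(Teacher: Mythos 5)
Your proof is correct and follows essentially the same route as the paper's: both apply Young's inequality with the conjugate exponents $r=q/\kappa$, $s=q/(q-\kappa)$ to introduce a free parameter, then solve for that parameter so the additive constant equals $\delta/2$ (your $\alpha$ corresponds to the paper's $y$ via $y=\alpha^{-q/\kappa}$). Your explicit treatment of the degenerate cases $t=0$ and $q=\kappa$ is a small bonus in rigor over the paper's version, but the main computation is identical.
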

\ifaistats
\else
\begin{proof}
The proof is based on the Fenchel-Young inequality and the conjugacy of functions $\frac{|x|^r}{r}$ and $\frac{|y|^s}{s}$ for $r, s \geq 1$, $\frac{1}{r} + \frac{1}{s} = 1,$ which implies $xy \leq \frac{x^r}{r} + \frac{y^s}{s},$ $\forall x, y \geq 0.$ In particular, setting $r = q/\kappa,$ $s = q/(q-\kappa)$, and $x = t^{\kappa},$ we have
$$
    \frac{L}{\kappa}t^{\kappa} \leq \frac{L t^q}{q y} + \frac{L(q - \kappa)}{q\kappa} y^{\frac{\kappa}{q-\kappa}}.
$$
It remains to set $\frac{\delta}{2} = \frac{L(q - \kappa)}{q\kappa} y^{\frac{\kappa}{q-\kappa}},$ which, solving for $y,$ gives $y = \big(\frac{\delta q \kappa}{2 L (q - \kappa)}\big)^{q - \kappa}$, and verify that, under this choice, $\Lambda = \frac{L t^q}{q y}.$
\end{proof}
\fi

\section{Generalized Extragradient for Problems with Weak MVI Solutions}\label{sec:mod-eg}

In this section, we consider the setup with the Euclidean norm $\|\cdot\| = \|\cdot\|_2$, i.e., $p = 2.$ To address the class of problems with weak \eqref{eq:MVI} solutions (see Assumption~\ref{assmpt:cohypo}), we introduce the following generalization of the extragradient algorithm, to which we refer as  Extragradient+ (\egp).
\ifaistats
\begin{equation}\label{eq:mod-eg}\tag{\egp}
\begin{gathered}
    \vub_k = \argmin_{\vu \in \rr^d}\Big\{\frac{a_k}{\beta} \innp{F(\vu_k), \vu - \vu_k} + \frac{1}{2}\|\vu - \vu_k\|^2\Big\},\\
    \vu_{k+1} = \argmin_{\vu \in \rr^d} \Big\{a_k \innp{F(\vub_k), \vu - \vu_k} + \frac{1}{2}\|\vu - \vu_k\|^2\Big\},
\end{gathered}    
\end{equation}
\else
\begin{equation}\label{eq:mod-eg}\tag{\egp}
\begin{gathered}
    \vub_k = \argmin_{\vu \in \rr^d}\Big\{\frac{a_k}{\beta} \innp{F(\vu_k), \vu - \vu_k} + \frac{1}{2}\|\vu - \vu_k\|^2\Big\} = \vu_k - \frac{a_k}{\beta}F(\vu_k),\\
    \vu_{k+1} = \argmin_{\vu \in \rr^d} \Big\{a_k \innp{F(\vub_k), \vu - \vu_k} + \frac{1}{2}\|\vu - \vu_k\|^2\Big\} = \vu_k - a_k F(\vub_k),
\end{gathered}    
\end{equation}
\fi
where $\beta \in (0, 1]$ is a parameter of the algorithm and $a_k > 0$ is the step size. When $\beta = 1,$ we recover standard \eg.

The analysis relies on the following merit (or gap) function:
\begin{equation}\label{eq:gap-def-nmon}
    h_k :=  a_k \Big(\innp{F(\vub_k), \vub_k - \vu^*} + \frac{\rho}{2}\|F(\vub_k)\|^2\Big),
\end{equation}
for some $\vu^* \in \cu^*$ for which $F$ satisfies Assumption~\ref{assmpt:cohypo}. Then Assumption~\ref{assmpt:cohypo} implies that $h_k \geq 0,$ $\forall k.$

The first (and main) step is to bound all $h_k$'s above, as in the following lemma.

\begin{restatable}{lemma}{egpBndhk}\label{lemma:eg+bound-on-h-k}
Let $F:\rr^d\to\rr^d$ be an arbitrary $L$-Lipschitz operator that satisfies Assumption~\ref{assmpt:cohypo} for some $\vu^* \in \cu^*$. Given an arbitrary initial point $\vu_0,$ let the sequences of points $\{\vu_i\}_{i\geq 1}$, $\{\vub_i\}_{i\geq 0}$ evolve according to \eqref{eq:mod-eg} for some $\beta \in (0, 1]$ and positive step sizes $\{a_i\}_{i\geq 0}.$ Then, for any $\gamma > 0$ and any $k \geq 0,$ we have:
\ifaistats
\begin{equation}
    \begin{aligned}
        h_k \leq &\; \frac{1}{2}\|\vu^* - \vu_k\|^2 - \frac{1}{2}\|\vu^* - \vu_{k+1}\|^2\\
        &+ \frac{a_k}{2}\big( \rho - a_k(1-\beta)\big)\|F(\vub_k)\|^2\\
    &+ \frac{{a_k}^2}{2\beta^2}\big(a_kL\gamma - \beta\big)\|F(\vu_k)\|^2\\
    &+ \frac{1}{2}\Big(\frac{a_k L}{\gamma} - \beta\Big)\|\vub_k - \vu_{k+1}\|^2,
    \end{aligned}
\end{equation}
\else
\begin{equation}
    \begin{aligned}
        h_k \leq &\; \frac{1}{2}\|\vu^* - \vu_k\|^2 - \frac{1}{2}\|\vu^* - \vu_{k+1}\|^2 + \frac{a_k}{2}\big( \rho - a_k(1-\beta)\big)\|F(\vub_k)\|^2\\
    &+ \frac{{a_k}^2}{2\beta^2}\big(a_kL\gamma - \beta\big)\|F(\vu_k)\|^2
    + \frac{1}{2}\Big(\frac{a_k L}{\gamma} - \beta\Big)\|\vub_k - \vu_{k+1}\|^2,
    \end{aligned}
\end{equation}
\fi
where $h_k$ is defined as in Eq.~\eqref{eq:gap-def-nmon}.
\end{restatable}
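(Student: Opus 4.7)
The plan is to chain two applications of the polarization identity $2\innp{x,y}=\|x+y\|^2-\|x\|^2-\|y\|^2$ with a single use of Young's inequality, where each polarization is set up by invoking one of the two algorithmic identities implicit in \eqref{eq:mod-eg}, namely $a_k F(\vub_k)=\vu_k-\vu_{k+1}$ and $a_k F(\vu_k)=\beta(\vu_k-\vub_k)$. I would start by writing $2h_k = 2 a_k\innp{F(\vub_k),\,\vub_k-\vu^*} + a_k\rho\,\|F(\vub_k)\|^2$ and splitting $\vub_k-\vu^* = (\vu_{k+1}-\vu^*) + (\vub_k-\vu_{k+1})$. Applied to the $(\vu_{k+1}-\vu^*)$ piece, polarization combined with $a_k F(\vub_k)=\vu_k-\vu_{k+1}$ immediately gives
\[
2 a_k\innp{F(\vub_k),\,\vu_{k+1}-\vu^*} \;=\; \|\vu_k-\vu^*\|^2 - \|\vu_{k+1}-\vu^*\|^2 - \|\vu_k-\vu_{k+1}\|^2,
\]
which is the source of the telescoping difference in the target bound.

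For the residual $2 a_k\innp{F(\vub_k),\,\vub_k-\vu_{k+1}}$, I would further decompose $F(\vub_k)=F(\vu_k) + (F(\vub_k)-F(\vu_k))$. The $F(\vu_k)$ piece is treated exactly: using $a_k F(\vu_k)=\beta(\vu_k-\vub_k)$ and a second polarization (with $x=\vu_k-\vub_k$, $y=\vub_k-\vu_{k+1}$), it equals $\beta\|\vu_k-\vu_{k+1}\|^2 - \beta\|\vub_k-\vu_k\|^2 - \beta\|\vub_k-\vu_{k+1}\|^2$. The Lipschitz-controlled piece $F(\vub_k)-F(\vu_k)$ is bounded by Cauchy-Schwarz, the hypothesis \eqref{eq:lipschitzness}, and Young's inequality $2pq \leq \gamma p^2 + q^2/\gamma$:
\[
2 a_k\innp{F(\vub_k)-F(\vu_k),\,\vub_k-\vu_{k+1}} \leq 2 a_k L\,\|\vub_k-\vu_k\|\,\|\vub_k-\vu_{k+1}\| \leq a_k L\gamma\|\vub_k-\vu_k\|^2 + \tfrac{a_k L}{\gamma}\|\vub_k-\vu_{k+1}\|^2.
\]
This is the sole step in which the free parameter $\gamma$ and the Lipschitz constant $L$ enter.

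Summing the three contributions, the $\|\vu_k-\vu_{k+1}\|^2$ terms combine with coefficient $-(1-\beta)$, the $\|\vub_k-\vu_k\|^2$ terms with coefficient $(a_k L\gamma-\beta)$, and the $\|\vub_k-\vu_{k+1}\|^2$ terms with coefficient $(a_k L/\gamma-\beta)$. Substituting the algorithmic identities $\|\vu_k-\vu_{k+1}\|^2 = a_k^2\|F(\vub_k)\|^2$ and $\|\vub_k-\vu_k\|^2 = (a_k^2/\beta^2)\|F(\vu_k)\|^2$, adding back the $a_k\rho\|F(\vub_k)\|^2$ from the definition of $h_k$, and dividing by $2$ produces the inequality in the lemma. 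The only nonobvious step is the preliminary split $F(\vub_k)=F(\vu_k)+(F(\vub_k)-F(\vu_k))$ before applying Young's: it quarantines Lipschitzness to a single cross-term and is what produces the paired coefficients $(a_k L\gamma-\beta)$ and $(a_k L/\gamma-\beta)$ with the $\beta$-shifts landing in precisely the right places. Applying Young's (or Lipschitzness) directly to $\|\vub_k-\vu_{k+1}\|^2$—for instance via triangle inequality together with $\|F(\vub_k)-F(\vu_k)\|\leq L\|\vub_k-\vu_k\|$—does not yield the stated coefficient structure for arbitrary $\gamma>0$.
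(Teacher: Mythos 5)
Your proof is correct and follows essentially the same approach as the paper: the splitting $\vub_k-\vu^*=(\vu_{k+1}-\vu^*)+(\vub_k-\vu_{k+1})$ followed by $F(\vub_k)=F(\vu_k)+(F(\vub_k)-F(\vu_k))$ produces exactly the three-term decomposition the paper starts from, and the subsequent use of the two optimality identities, polarization, and a single Cauchy--Schwarz/Lipschitz/Young chain on the cross term is identical.
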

\ifaistats
The proof is provided in Appendix~\ref{appx:sec-egp}.
\else
\begin{proof}
Fix any $k \geq 0$ and write $h_k$ equivalently as
\begin{equation}\label{eq:mod-eg-change-in-gap}
\begin{aligned}
    h_k =& \;  a_k\innp{F(\vub_k), \vu_{k+1} - \vu^*} + a_k\innp{F(\vu_k), \vub_k - \vu_{k+1}} \\
    &+ a_k \innp{F(\vub_k) - F(\vu_k), \vub_k - \vu_{k+1}}
    + a_k \frac{\rho}{2}\|F(\vub_k)\|^2.
\end{aligned}
\end{equation}
The proof proceeds by bounding from above individual terms on the right-hand side of Eq.~\eqref{eq:mod-eg-change-in-gap}. For the first term, the first-order optimality in the definition of $\vu_{k+1}$ gives:
$$
    a_k F(\vub_k) + \vu_{k+1} - \vu_k = \zeros.
$$
Thus, we have
\begin{equation}\label{eq:mod-eg-ineq-1}
\begin{aligned}
    a_k\innp{F(\vub_k), \vu_{k+1} - \vu^*} &= -\innp{\vu_{k+1} - \vu_k, \vu_{k+1} - \vu^*}\\
    &= \frac{1}{2}\|\vu^* - \vu_k\|^2 - \frac{1}{2}\|\vu^* - \vu_{k+1}\|^2 - \frac{1}{2}\|\vu_k - \vu_{k+1}\|^2.
\end{aligned}
\end{equation}

For the second term on the right-hand side of Eq.~\eqref{eq:mod-eg-change-in-gap}, the first-order optimality in the definition of $\vub_k$ implies:
\begin{equation*}
    \frac{a_k}{\beta}\innp{F(\vu_k) + \vub_{k} - \vu_k, \vu_{k+1} - \vub_k} = 0,
\end{equation*}
which, similarly as for the first term, leads to:
\begin{equation}\label{eq:mod-eg-ineq-2}
    a_k\innp{F(\vu_k), \vub_k - \vu_{k+1}} = \frac{\beta}{2}\|\vu_k - \vu_{k+1}\|^2 - \frac{\beta}{2}\|\vu_k - \vub_k\|^2 - \frac{\beta}{2}\|\vu_{k+1} - \vub_k\|^2.
\end{equation}

For the third term on the right-hand side of Eq.~\eqref{eq:mod-eg-change-in-gap}, applying the Cauchy-Schwarz inequality, $L$-Lipschitzness of $F,$ and Young's inequality, respectively, we have:
\begin{align}
    a_k \innp{F(\vub_k) - F(\vu_k), \vub_k - \vu_{k+1}} & \leq a_k \|F(\vub_k) - F(\vu_k)\|\|\vub_k - \vu_{k+1}\|\notag\\
    &\leq a_k L \|\vub_k - \vu_k\|\|\vub_k - \vu_{k+1}\|\notag\\
    &\leq \frac{a_k L \gamma}{2}\|\vub_k - \vu_k\|^2 + \frac{a_k L}{2\gamma}\|\vub_k - \vu_{k+1}\|^2, \label{eq:mod-eg-ineq-3}
\end{align}
where the last inequality holds for any $\gamma > 0.$

Using the fact that $\vub_k - \vu_k = -\frac{a_k}{\beta}F(\vu_k)$, $\vu_{k+1} - \vu_k = -a_k F(\vub_k)$ and combining Eqs.~\eqref{eq:mod-eg-ineq-1}-\eqref{eq:mod-eg-ineq-3} with Eq.~\eqref{eq:mod-eg-change-in-gap}, we have:
\begin{align*}
    h_k \leq &\; \frac{1}{2}\|\vu^* - \vu_k\|^2 - \frac{1}{2}\|\vu^* - \vu_{k+1}\|^2 
    + \frac{a_k}{2}\big( \rho - a_k(1-\beta)\big)\|F(\vub_k)\|^2\\
    &+ \frac{{a_k}^2}{2\beta^2}\big(a_kL\gamma - \beta\big)\|F(\vu_k)\|^2
    + \frac{1}{2}\Big(\frac{a_k L}{\gamma} - \beta\Big)\|\vub_k - \vu_{k+1}\|^2,
\end{align*}
as claimed.
\end{proof}
\fi

Using Lemma~\ref{lemma:eg+bound-on-h-k}, we can now draw conclusions about the convergence of \egp~by choosing parameters $\beta, \gamma$ and the step sizes $a_k$ to guarantee that $h_k < \frac{1}{2}\|\vu^* - \vu_k\|^2 - \frac{1}{2}\|\vu^* - \vu_{k+1}\|^2$ as long as $\|F(\vub_k)\|\neq 0$. 

\begin{theorem}\label{thm:convergence-of-eg+}
Let $F:\rr^d\to\rr^d$ be an arbitrary $L$-Lipschitz operator that satisfies Assumption~\ref{assmpt:cohypo} for some $\vu^* \in \cu^*$. Given an arbitrary initial point $\vu_0 \in \rr^d,$ let the sequences of points $\{\vu_i\}_{i\geq 1}$, $\{\vub_i\}_{i\geq 0}$ evolve according to \eqref{eq:mod-eg} for $\beta = \frac{1}{2}$ and $a_k = \frac{1}{2L}.$ Then:
\begin{itemize}
    \item[(i)]  all accumulation points of $\{\vub_k\}_{k\geq 0}$ are  in $\cu^*$.
    \item[(ii)] for all $k\geq 1:$ 
    $$
        \frac{1}{k+1}\sum_{i=0}^k \|F(\vub_i)\|^2 \leq \frac{2L \|\vu_0 - \vu^*\|^2}{(k+1)(1/(4L) - \rho)}.
    $$
    In particular, we have that 
    $$
        \min_{0\leq i\leq k} \|F(\vub_i)\|^2 \leq \frac{2L \|\vu_0 - \vu^*\|^2}{(k+1)(1/(4L) - \rho)}
    $$
    and 
    $$
        \ee_{i \sim \mathrm{Unif}\{0, \dots, k\}}\big[\|F(\vub_i)\|^2\big] \leq \frac{2L \|\vu_0 - \vu^*\|^2}{(k+1)(  1/(4L) - \rho)},
    $$
    where $i \sim \mathrm{Unif}\{0, \dots, k\}$ denotes an index $i$ chosen uniformly at random from the set $\{0, \dots, k\}.$
\end{itemize}
\end{theorem}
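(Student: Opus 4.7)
The plan is to instantiate Lemma~\ref{lemma:eg+bound-on-h-k} with the algorithm's parameter choices $\beta = 1/2$ and $a_k = 1/(2L)$, and then pick the free parameter $\gamma = 1$. This value of $\gamma$ is engineered to simultaneously annihilate two of the error terms on the right-hand side of the lemma: $a_k L \gamma - \beta = \tfrac{1}{2} - \tfrac{1}{2} = 0$ kills the $\|F(\vu_k)\|^2$ contribution, while $\tfrac{a_k L}{\gamma} - \beta = \tfrac{1}{2} - \tfrac{1}{2} = 0$ kills the $\|\vub_k - \vu_{k+1}\|^2$ contribution. The remaining coefficient is $\rho - a_k(1-\beta) = \rho - \tfrac{1}{4L}$, which is strictly negative by Assumption~\ref{assmpt:cohypo}.

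Combined with the fact that $h_k \geq 0$ (which is precisely Assumption~\ref{assmpt:cohypo} applied at $\vub_k$, since $\vu^*$ is the weak-\eqref{eq:MVI} point), rearrangement gives the clean per-step recursion
$$
    \frac{1}{4L}\Big(\frac{1}{4L} - \rho\Big)\|F(\vub_k)\|^2 \;\leq\; \frac{1}{2}\|\vu^* - \vu_k\|^2 - \frac{1}{2}\|\vu^* - \vu_{k+1}\|^2.
$$
Summing this over $i = 0,\dots,k$ telescopes the right-hand side to at most $\tfrac{1}{2}\|\vu_0 - \vu^*\|^2$; dividing through by $(k+1)$ and by $\tfrac{1}{4L}(\tfrac{1}{4L} - \rho)$ yields the average bound in part (ii). The $\min$ and uniform-expectation bounds follow immediately, since both quantities are at most the running average.

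For part (i), the same per-step inequality forces $\|\vu^* - \vu_k\|$ to be non-increasing, so $\{\vu_k\}$ is bounded; since $\vub_k = \vu_k - \tfrac{1}{L}F(\vu_k)$ and $F$ is Lipschitz (hence bounded on bounded sets), $\{\vub_k\}$ is bounded as well. Taking $k \to \infty$ in the telescoped bound gives $\sum_{i\geq 0}\|F(\vub_i)\|^2 < \infty$, so in particular $\|F(\vub_i)\| \to 0$. For any subsequence $\vub_{k_j} \to \bar{\vu}$, continuity of $F$ then gives $F(\bar\vu) = \lim_j F(\vub_{k_j}) = \zeros$, which places $\bar\vu$ in $\cu^*$.

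The conceptual core of the argument is the precise coupling $(\beta,\gamma,a_k) = (\tfrac{1}{2}, 1, \tfrac{1}{2L})$ that eliminates the two potentially harmful error terms in Lemma~\ref{lemma:eg+bound-on-h-k} while leaving a strictly negative multiple of $\|F(\vub_k)\|^2$ (thanks to $\rho < \tfrac{1}{4L}$). I expect that step to be the main obstacle; once it is in hand, both the telescoping rate and the accumulation-point argument are standard.
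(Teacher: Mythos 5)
Your proposal is correct and follows essentially the same path as the paper's proof: instantiate Lemma~\ref{lemma:eg+bound-on-h-k} with $(\beta,\gamma,a_k)=(\tfrac12,1,\tfrac1{2L})$ so that the $\|F(\vu_k)\|^2$ and $\|\vub_k - \vu_{k+1}\|^2$ coefficients vanish, use $h_k\ge 0$ to isolate a strictly negative multiple of $\|F(\vub_k)\|^2$, telescope for part (ii), and combine monotonicity of $\|\vu_k-\vu^*\|$ with Lipschitzness of $F$ and $F(\vu^*)=\zeros$ for part (i). The only cosmetic difference is how boundedness of $\{\vub_k\}$ is justified (you cite boundedness of $F$ on bounded sets, the paper writes the explicit bound $\|\vub_k-\vu^*\|\le 2\|\vu_k-\vu^*\|$); these are equivalent.
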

\begin{proof}
Applying Lemma~\ref{lemma:eg+bound-on-h-k} with the choice of $a_k$ and $\beta$ from the theorem statement and with $\gamma = 1,$ we get
\ifaistats
\begin{align*}
    h_k \leq\;& \frac{1}{2}\|\vu^* - \vu_k\|^2 - \frac{1}{2}\|\vu^* - \vu_{k+1}\|^2\\
    &+ \frac{1}{4L}\Big(\rho - \frac{1}{4L}\Big)\|F(\vub_k)\|^2.
\end{align*}
\else
\begin{align*}
    h_k \leq \frac{1}{2}\|\vu^* - \vu_k\|^2 - \frac{1}{2}\|\vu^* - \vu_{k+1}\|^2
    + \frac{1}{4L}\Big(\rho - \frac{1}{4L}\Big)\|F(\vub_k)\|^2.
\end{align*}
\fi
By Assumption~\ref{assmpt:cohypo}, $\rho < \frac{1}{4L},$ and, thus, the constant multiplying $\|F(\vub_k)\|^2$ is strictly negative.

As $h_k \geq 0$ (by Assumption~\ref{assmpt:cohypo}), we can conclude that
\ifaistats
\begin{equation}\label{eq:eg+conv-main}
\begin{aligned}
   & \frac{1}{2}\|\vu^* - \vu_{k+1}\|^2 - \frac{1}{2}\|\vu^* - \vu_k\|^2\\
   &\hspace{1cm}\leq - \frac{1}{4L}\Big(\frac{1}{4L} - \rho\Big)\|F(\vub_k)\|^2 \leq 0.
\end{aligned}
\end{equation}
\else
\begin{equation}\label{eq:eg+conv-main}
    \frac{1}{2}\|\vu^* - \vu_{k+1}\|^2 - \frac{1}{2}\|\vu^* - \vu_k\|^2 \leq - \frac{1}{4L}\Big(\frac{1}{4L} - \rho\Big)\|F(\vub_k)\|^2 \leq 0.
\end{equation}
\fi
As $\frac{1}{4L}\Big(\frac{1}{4L} - \rho\Big) > 0,$ Eq.~\eqref{eq:eg+conv-main} implies that $\|F(\vub_k)\|$ converges to zero as $k \to \infty.$ Further, as $\vub_k - \vu_k = -\frac{a_k}{\beta}F(\vu_k),$ using triangle inequality and $F(\vu^*) = \zeros:$
\begin{equation}\label{eq:bound-on diam}
\begin{aligned}
    \|\vub_k - \vu^*\| &\leq \|\vu_k - \vu^*\| + \frac{a_k}{\beta}\|F(\vu_k) - F(\vu^*)\|\\
    &\leq \Big(1 + L\frac{a_k}{\beta}\Big)\|\vu_k - \vu^*\| = 2 \|\vu_k - \vu^*\|,
\end{aligned}
\end{equation}
where we have used that $F$ is $L$-Lipschitz. Now, as $\|\vu_k - \vu^*\|$ is bounded (by $\|\vu_0 - \vu^*\|,$ from Eq.~\eqref{eq:eg+conv-main}), it follows that the sequence $\{\vub_k\}$ is bounded as well, and thus has a converging subsequence. Let $\{\vub_{k_i}\}$ be any converging subsequence of $\{\vub_k\}$ and let $\vub^*$ be its corresponding accumulation point. Then, as $\|F(\vub_k)\|$ converges to zero as $k \to \infty,$ it follows that $\|F(\vub_{k_i})\|$ converges to zero as $i \to \infty,$ and so it must be $\vub^* \in \cu^*.$ 


For Part (ii), telescoping Eq.~\eqref{eq:eg+conv-main}, we get:
\ifaistats
\begin{align*}
    &\frac{1}{4L}\Big(\frac{1}{4L} - \rho\Big)\sum_{i=0}^k \|F(\vub_i)\|^2 \\
    &\hspace{.5cm}\leq \frac{1}{2}\|\vu_0 - \vu^*\|^2 - \frac{1}{2}\|\vu_{k+1} - \vu^*\|^2 
    \leq \frac{1}{2}\|\vu_0 - \vu^*\|^2.
\end{align*}
\else
\begin{align*}
    \frac{1}{4L}\Big(\frac{1}{4L} - \rho\Big)\sum_{i=0}^k \|F(\vub_i)\|^2 &\leq \frac{1}{2}\|\vu_0 - \vu^*\|^2 - \frac{1}{2}\|\vu_{k+1} - \vu^*\|^2 \\
    &\leq \frac{1}{2}\|\vu_0 - \vu^*\|^2.
\end{align*}
\fi
Rearranging the last inequality:
$$
    \frac{1}{k+1}\sum_{i=0}^k \|F(\vub_i)\|^2 \leq \frac{2L \|\vu_0 - \vu^*\|^2}{(k+1)(1/(4L) - \rho)}.
$$
It remains to observe that  $$\mathbb{E}_{i\sim\mathrm{Unif}\{0,...,k\}}[\|F(\vub_i)\|^2] = \frac{1}{k+1}\sum_{i=0}^k \|F(\vub_i)\|^2$$
and $\frac{1}{k+1}\sum_{i=0}^k \|F(\vub_i)\|^2 \geq \min_{0\leq i \leq k}\|F(\vub_i)\|^2$.
\end{proof}

\begin{remark}
Due to Eq.~\eqref{eq:bound-on diam}, we have that all the iterates of \egp~with the parameter setting as in Theorem~\ref{thm:convergence-of-eg+}
remain in the ball centered at $\vu^*$ and of radius at most $2\|\vu_0 - \vu^*\|$. Thus, Assumption~\ref{assmpt:cohypo} does not need to hold globally for the result to apply; it suffices that it only applies locally to points from the ball around $\vu^*$ with radius $2\|\vu_0 - \vu^*\|$.
\end{remark}

\begin{remark}
It is possible to obtain similar convergence results as those of Theorem~\ref{thm:convergence-of-eg+} under different parameter choices. In particular, for $\gamma \in (0, 1],$ it suffices that $a_k \leq \frac{\beta \gamma}{L}$ and $\rho < a_k(1-\beta).$ We settled on the choice made in Theorem~\ref{thm:convergence-of-eg+} as it is simple and requires tuning only one parameter, $L$.
\end{remark}

\begin{remark}\label{rem:SVI-existence}
Note that, in fact, we did not need to assume that $\vu^*$ from Assumption~\ref{assmpt:cohypo} is from $\cu^*;$ it could have been any point from $\rr^d$ for which Assumption~\ref{assmpt:cohypo} is satisfied. All that would change in the proof of Theorem~\ref{thm:convergence-of-eg+} is that in Eq.~\eqref{eq:bound-on diam}, using $\|F(\vu_k)\| \leq \|F(\vu_k) - F(\vu^*)\| + \|F(\vu^*)\|$ (by triangle inequality) we would have $2\|\vu_k - \vu^*\| + \frac{1}{L}\|F(\vu^*)\|$ on the right-hand side. Since $\vu^* \in \rr^d$ and $F$ is Lipschitz-continuous, if $F$ is bounded at any point $\vu \in \rr^d$, $\|F(\vu^*)\|$ is bounded as well. Thus, we can still conclude that $\|\vub_k - \vu^*\|$ is bounded and proceed with the rest of the proof. An interesting consequence of this observation and the proof of Theorem~\ref{thm:convergence-of-eg+} is that  Assumption~\ref{assmpt:cohypo} \emph{guarantees} existence of an \eqref{eq:SVI} solution. 
\end{remark}

\section{Extensions: $\ell_p$ Norms and Stochastic Setups}\label{sec:extensions}


In this section, we show how to extend the results of Section~\ref{sec:mod-eg} to non-Euclidean, $\ell_p$-normed setups (for $\rho = 0$)
and stochastic evaluations of $F$. In particular, we let $\|\cdot\| = \|\cdot\|_p$ for $p \in (1, \infty)$\footnote{Note that the norms $\|\cdot\|_1$ and $\|\cdot\|_{\infty}$ are within a constant factor of the $\ell_p$-norm for $p = 1 + \frac{1}{\log(d)}$ and $p = \log(d),$ respectively, and so taking $p \in (1, \infty)$ is w.l.o.g.---for any $p < 1 + \frac{1}{\log(d)}$ or $p > \log(d)$, we can run the algorithm with $p = 1 + \frac{1}{\log d}$ or $p = {\log d}$, losing at most a constant factor in the convergence bound.} and $p^* = \frac{p}{p-1}.$ Further, we let $\tF$ denote the stochastic estimate of $F$ that at iteration $k$ satisfies:
\ifaistats
\begin{equation}\label{eq:stoch-est-F}
    \begin{gathered}
        \ee[\tF(\vub_{k})| \cfb_k] = {F}(\vub_{k}), \\  \ee[\|\tF(\vub_{k}) - F(\vub_{k})\|_{p^*}^2 | \cfb_k] \leq {\bar{\sigma}_k}^2\\
        \ee[\tF(\vu_{k+1})| \cf_{k+1}] = F(\vu_{k+1}), \\ \ee[\|\tF(\vu_{k+1}) - F(\vu_{k+1})\|_{p^*}^2 |\cf_{k+1}] \leq {\sigma_{k+1}}^2,
    \end{gathered}
\end{equation}
\else
\begin{equation}\label{eq:stoch-est-F}
    \begin{gathered}
        \ee[\tF(\vub_{k})| \cfb_k] = F(\vub_{k}), \quad  \ee[\|\tF(\vub_{k}) - F(\vub_{k})\|_*^2 | \cfb_k] \leq {\bar{\sigma}_k}^2\\
        \ee[\tF(\vu_{k+1})| \cf_{k+1}] = F(\vu_{k+1}), \quad \ee[\|\tF(\vu_{k+1}) - F(\vu_{k+1})\|_*^2 |\cf_{k+1}] \leq {\sigma_{k+1}}^2,
    \end{gathered}
\end{equation}
\fi
where $\cf_k$ and $\cfb_k$ denote the natural filtrations, including all the randomness up to the construction of points $\vu_k$ and $\vub_k,$ respectively, and ${\bar{\sigma}_k}^2, {\sigma_{k+1}}^2$ are the variance constants. Observe that $\cf_k \subseteq \cfb_k$ and $\cfb_k \subseteq \cf_{k+1}.$ To simplify the notation, we denote:
\begin{equation}\label{eq:noise-vectors}
    \vetab_k = \tF(\vub_{k}) - F(\vub_{k}), \; \veta_{k+1} = \tF(\vu_{k+1}) - F(\vu_{k+1}).
\end{equation}

The variant of the method we consider here is stated as follows:
\begin{equation}\label{eq:egp+}\tag{\egpp}
    \begin{gathered}
        \vub_k = \argmin_{\vu \in \rr^d}\Big\{\frac{a_k}{\beta} \innp{\tF(\vu_k), \vu - \vu_k} +  \frac{1}{q}\|\vu - \vu_k\|_p^q\Big\},\\
        \vu_{k+1} = \argmin_{\vu \in \rr^d}\Big\{a_k \innp{ \tF(\vub_k), \vu - \vu_k} + \phi_p(\vu, \vu_k)\Big\},
    \end{gathered}
\end{equation}
where
\begin{equation}\label{eq:q-def}
    q = \begin{cases}
            2, &\text{ if } p \in (1, 2], \\
            p^* = \frac{p}{p-1}, &\text{ if } p \in (2, \infty)
    \end{cases}
\end{equation}
and 
\begin{equation}\label{eq:phi-p-def}
    \phi_p(\vu, \vu_k) = \begin{cases}
                            D_{\frac{1}{2}\|\cdot-\vu_0\|_p^2}(\vu, \vu_k), \text{ if } p \in (1, 2],\\
                            \frac{1}{p}\|\vu - \vu_k\|_p^p, \text{ if } p \in (2, \infty).
    \end{cases}
\end{equation}

Notice that for $p = 2$, \egpp~is equivalent to \egp. Thus, \egpp~generalizes \egp~to arbitrary $\ell_p$ norms. However, \egpp~is different from the standard Extragradient or Mirror-Prox, for two reasons. First is that, as is the case for \egp, the step sizes that determine $\vub_k$ and $\vu_{k+1}$ (i.e., $a_k/\beta$ and $a_k$) are not the same in general, as we could (and will) choose $\beta \neq 1.$ Second, unless $p = q = 2,$ the function $\frac{1}{q}\|\vu - \vu_k\|_p^q$ in the definition of the algorithm is \emph{not} a Bregman divergence between points $\vu$ and $\vu_k$ of any function $\psi.$ Further, when $p > 2,$ $\frac{1}{q}\|\vu - \vu_k\|_p^q$ is \emph{not} strongly convex. Instead, it is \emph{$p$-uniformly convex} with constant 1. Additionally, no function whose gap between the maximum and the minimum value is bounded by a constant on any ball of constant radius can have constant of strong convexity w.r.t.~$\|\cdot\|_p$ that is larger than $O(\frac{1}{d^{1-2/p}})$~\cite{d2018optimal}. When $p \in (1, 2],$ $\frac{1}{q}\|\vu - \vu_k\|_p^q$ is strongly convex with constant $p-1$~\cite{nemirovski2004regular}. We let $m_p$ denote the constant of strong/uniform convexity of $\frac{1}{q}\|\vu - \vu_k\|_p^q,$ that is:
\begin{equation}\label{eq:mp}
    m_p = \max\{p-1, 1\}.
\end{equation}
Observe that
\begin{equation}\label{eq:phi-p-unif-cvx}
    \phi_p(\vu, \vu_k) \geq \frac{m_p}{q}\|\vu - \vu_k\|_p^q.
\end{equation}
This is immediate for $p > 2,$ by the definition of $\phi_p$ and using that $q = p$ and $m_p = 1$ when $p > 2.$ For $p \in (1, 2],$ we have that $q = 2,$ and Eq.~\eqref{eq:phi-p-unif-cvx} follows by strong convexity of $\frac{1}{2}\|\cdot\|_p^2.$

As in the case of Euclidean norms, the analysis relies on the following merit function:
\begin{equation}\label{eq:h-k-egpp}
    h_k := a_k \Big(\innp{F(\vub_k), \vub_k - \vu^*} + \frac{\rho}{2}\|F(\vub_k)\|_{p^*}^2\Big).
\end{equation}
Moreover, as before, Assumption~\ref{assmpt:cohypo} guarantees that $h_k \geq 0,$ $\forall k.$
\ifaistats
Even though we only handle the case $\rho = 0$ for $p \neq 2$, the analysis is significantly more challenging than in the $\ell_2$ case, and, due to space constraints, we only state the main results here, while all the technical details are provided in Appendix~\ref{appx:sec-egpp}. 
\fi

\ifaistats
\else
We start by first proving a lemma that holds for generic choices of algorithm parameters $a_k$ and $\beta.$ We will then use this lemma to deduce the convergence bounds for different choices of $p > 1$ and both deterministic and the stochastic oracle access to $F.$

\begin{lemma}\label{lemma:bound-on-hk-egpp}
Let $p > 1$ and let $F:\rr^d\to\rr^d$ be an arbitrary $L$-Lipschitz operator w.r.t.~$\|\cdot\|_p$ that satisfies Assumption~\ref{assmpt:cohypo} for some $\vu^* \in \cu^*$. Given an arbitrary initial point $\vu_0,$ let the sequences of points $\{\vu_i\}_{i\geq 1}$, $\{\vub_i\}_{i\geq 0}$ evolve according to \eqref{eq:egp+} for some $\beta \in (0, 1]$ and positive step sizes $\{a_i\}_{i\geq 0}.$ Then, for any $\gamma > 0$ and any $k \geq 0$:
\begin{equation}\notag
    \begin{aligned}
        h_k \leq&\; -a_k \innp{\vetab_k, \vub_k - \vu^*} - a_k\innp{\vetab_k - \veta_k, \vub_k - \vu_{k+1}} + \frac{a_k \rho}{2}\|F(\vub_k)\|_{p^*}^2 \\
     &+ \phi_p(\vu^*, \vu_k) - \phi_p(\vu^*, \vu_{k+1}) +  \frac{\beta - m_p}{q}\|\vu_{k+1} - \vu_k\|_p^q\\
     &+ \frac{a_k \Lambda_k \gamma - \beta}{q}\|\vub_k - \vu_k\|_{p}^q + \frac{a_k \Lambda_k/\gamma - \beta m_p}{q}\|\vub_k - \vu_{k+1}\|_p^q + a_k \delta_k,
    \end{aligned}
\end{equation}
where $h_k$ is defined as in Eq.~\eqref{eq:h-k-egpp}, $\delta_k$ is any positive number, and $\Lambda_k = \Big(\frac{q-2}{\delta_k q }\Big)^{\frac{q-2}{2}}L^{q/2}$. When $q = 2,$ the statement also  holds with $\delta_k = 0$ and $\Lambda_k = L.$
\end{lemma}
\begin{proof}
We begin the proof by writing $h_k$ equivalently as:
\begin{equation}\label{eq:hk-equiv-egpp}
    \begin{aligned}
        h_k =&\; a_k \innp{\tF(\vub_k), \vub_k - \vu^*} -a_k \innp{\vetab_k, \vub_k - \vu^*} + \frac{a_k \rho}{2}\|F(\vub_k)\|_{p^*}^2\\
        =&\; a_k \innp{\tF(\vub_k), \vu_{k+1} - \vu^*} + a_k \innp{\tF(\vu_k), \vub_k - \vu_{k+1}} \\
        &+ a_k \innp{\tF(\vub_k) - \tF(\vu_k), \vub_k - \vu_{k+1}} -a_k \innp{\vetab_k, \vub_k - \vu^*} + \frac{a_k \rho}{2}\|F(\vub_k)\|_{p^*}^2.
    \end{aligned}
\end{equation}
The proof now proceeds by bounding individual terms on the right-hand side of the last equality.

Let $M_{k+1}(\vu) = a_k \innp{\nabla \tF(\vub_k), \vu - \vu_k} + \phi_p(\vu, \vu_k),$ so that $\vu_{k+1} = \argmin_{\vu \in \rr^d}M_{k+1}(\vu)$. By the definition of Bregman divergence of $M_{k+1}:$
$$
    M_{k+1}(\vu^*) = M_{k+1}(\vu_{k+1}) + \innp{\nabla M_{k+1}(\vu_{k+1}), \vu^* - \vu_{k+1}} + D_{M_{k+1}}(\vu^*, \vu_{k+1}).
$$
As $\vu_{k+1} = \argmin_{\vu \in \rr^d}M_{k+1}(\vu)$, we have $\nabla M_{k+1}(\vu_{k+1}) = \zeros.$ Further, $D_{M_{k+1}}(\vu^*, \vu_{k+1}) = D_{\phi_p(\cdot, \vu_k)}(\vu^*, \vu_{k+1}).$ When $p \leq 2,$ $\phi_p$ itself is a Bregman divergence, and we have  $D_{M_{k+1}}(\vu^*, \vu_{k+1}) = \phi_p(\vu^*, \vu_{k+1}).$ When $p > 2,$ $\phi_p(\vu, \vu_k) = \frac{1}{p}\|\vu - \vu_k\|_p^p$, and as $\phi_p$ is $p$-uniformly convex with constant 1, it follows that $D_{M_{k+1}}(\vu^*, \vu_{k+1}) \geq \frac{1}{p}\|\vu^* - \vu_{k+1}\|_p^p = \phi_p(\vu^*, \vu_{k+1}).$ Thus:
$$
    M_{k+1}(\vu^*) \geq M_{k+1}(\vu_{k+1}) + \phi_p(\vu^*, \vu_{k+1}). 
$$
Equivalently, applying the definition of $M_{k+1}(\cdot)$ to the last inequality:
\begin{equation}\label{eq:egpp-1}
\begin{aligned}
    a_k \innp{\nabla \tF(\vub_k), \vu_{k+1} - \vu^*} &\leq \phi_p(\vu^*, \vu_k) - \phi_p(\vu^*, \vu_{k+1}) - \phi_p(\vu_{k+1, \vu_k})\\
    &\leq \phi_p(\vu^*, \vu_k) - \phi_p(\vu^*, \vu_{k+1}) - \frac{m_p}{q}\|\vu_{k+1} - \vu_k\|_p^q,
\end{aligned}
\end{equation}
where the last inequality follows from Eq.~\eqref{eq:phi-p-unif-cvx}.

Now let $\bar{M}_k(\vu) = \frac{a_k}{\beta} \innp{\tF(\vu_k), \vu - \vu_k} +  \frac{1}{q}\|\vu - \vu_k\|_p^q$ so that $\vub_k = \argmin_{\vu \in \rr^d}\bar{M}_k(\vu).$ By similar arguments as above,
\begin{align*}
    \bar{M}_k(\vu_{k+1}) &= \bar{M}_k(\vub_k) + \innp{\nabla \bar{M}_k(\vub_k), \vu_{k+1} - \vub_k} + D_{M_k}(\vu_{k+1}, \vub_k)\\
    &\geq \bar{M}_k(\vub_k) + \frac{m_p}{q}\|\vu_{k+1} - \vub_k\|_p^q,
\end{align*}
where the inequality is by $\nabla \bar{M}_k(\vub_k) = \zeros$ and the fact that $\frac{1}{q}\|\cdot\|_p^q$ is $q$-uniformly convex w.r.t.~$\|\cdot\|_p$ with constant $m_p$, by the choice of $q$ from Eq.~\eqref{eq:q-def}. Applying the definition of $\bar{M}_k(\vu)$ to the last inequality:
\begin{equation}\label{eq:egpp-2}
    a_k\innp{\tF(\vu_k), \vub_k - \vu_{k+1}} \leq \frac{\beta}{q}\big( \|\vu_{k+1} - \vu_k\|_p^q - \|\vub_k - \vu_k\|_p^q - m_p\|\vu_{k+1} - \vub_k\|_p^q\big).
\end{equation}

The remaining term that we need to bound is $\innp{\tF(\vub_k) - \tF(\vu_k), \vub_k - \vu_{k+1}}.$ Using the definitions of $\vetab_k, \veta_k,$ we have:
\begin{align*}
    \innp{\tF(\vub_k) - \tF(\vu_k), \vub_k - \vu_{k+1}} &= \innp{F(\vub_k) - F(\vu_k), \vub_k - \vu_{k+1}} - \innp{\vetab_k - \veta_k, \vub_k - \vu_{k+1}}\\
    &\stackrel{(i)}{\leq} - \innp{\vetab_k - \veta_k, \vub_k - \vu_{k+1}} + \|F(\vub_k) - F(\vu_k)\|_{p^*}\|\vub_k - \vu_{k+1}\|_p\\
    &\stackrel{(ii)}{\leq} - \innp{\vetab_k - \veta_k, \vub_k - \vu_{k+1}} + L\|\vub_k - \vu_k\|_{p}\|\vub_k - \vu_{k+1}\|_p\\
    &\stackrel{(iii)}{\leq} - \innp{\vetab_k - \veta_k, \vub_k - \vu_{k+1}} + \frac{L \gamma}{2}\|\vub_k - \vu_k\|_{p}^2 + \frac{L}{2\gamma}\|\vub_k - \vu_{k+1}\|_p^2,
\end{align*}
where $(i)$ is by H\"{o}lder's inequality, $(ii)$ is by $L$-Lipschitzness of $F,$ and $(iii)$ is by Young's inequality, which holds for any $\gamma > 0.$ Now, let $\delta_k > 0$ and $\Lambda_k = \Big(\frac{2(q-\kappa)}{\delta_k q \kappa}\Big)^{\frac{q-\kappa}{\kappa}}L^{q/\kappa}.$ Then, applying Proposition~\ref{prop:smooth-ub} to the last two terms in the last inequality:
\begin{equation}\label{eq:egpp-3}
    \begin{aligned}
        \innp{\tF(\vub_k) - \tF(\vu_k), \vub_k - \vu_{k+1}} \leq &\; - \innp{\vetab_k - \veta_k, \vub_k - \vu_{k+1}}\\
        &+ \frac{\Lambda_k \gamma}{q}\|\vub_k - \vu_k\|_{p}^q + \frac{\Lambda_k}{q\gamma}\|\vub_k - \vu_{k+1}\|_p^q + \delta_k.
    \end{aligned}
\end{equation}
Observe that when $q = 2,$ there is no need to apply Proposition~\ref{prop:smooth-ub}, and the last inequality is satisfied with $\delta_k = 0$ and $\Lambda_k = L.$

Combining Eqs.~\eqref{eq:egpp-1}-\eqref{eq:egpp-3} with Eq.~\eqref{eq:hk-equiv-egpp}, we have:
\begin{align*}
     h_k \leq&\; -a_k \innp{\vetab_k, \vub_k - \vu^*} - a_k\innp{\vetab_k - \veta_k, \vub_k - \vu_{k+1}} + \frac{a_k \rho}{2}\|F(\vub_k)\|_{p^*}^2 \\
     &+ \phi_p(\vu^*, \vu_k) - \phi_p(\vu^*, \vu_{k+1}) +  \frac{\beta - m_p}{q}\|\vu_{k+1} - \vu_k\|_p^q\\
     &+ \frac{a_k \Lambda_k \gamma - \beta}{q}\|\vub_k - \vu_k\|_{p}^q + \frac{a_k \Lambda_k/\gamma - \beta m_p}{q}\|\vub_k - \vu_{k+1}\|_p^q + a_k \delta_k,
\end{align*}
as claimed.
\end{proof}

We are now ready to state and prove the main convergence bounds. For simplicity, we start with the case of exact oracle access to $F$. We then show that we can build on this result by separately bounding the error terms due to the variance of the stochastic estimates $\tF.$

\fi 

\paragraph{Deterministic oracle access.} The main result is summarized in the following theorem.

\begin{restatable}{theorem}{thmegppdet}\label{thm:egpp_deterministic}
Let $p > 1$ and let $F:\rr^d\to\rr^d$ be an arbitrary $L$-Lipschitz operator w.r.t.~$\|\cdot\|_p$ that satisfies Assumption~\ref{assmpt:cohypo} {with $\rho = 0$} for some $\vu^* \in \cu^*$. Assume that we are given oracle access to the exact evaluations of $F,$ i.e., $\vetab_i = \veta_i = \zeros,$ $\forall i.$ Given an arbitrary initial point $\vu_0 \in \rr^d,$ let the sequences of points $\{\vu_i\}_{i\geq 1}$, $\{\vub_i\}_{i\geq 0}$ evolve according to \eqref{eq:egp+} for  $\beta \in (0, 1]$ and step sizes $\{a_i\}_{i\geq 0}$ specified below. Then, we have:
\begin{itemize}
    \item[(i)] Let $p \in (1, 2]$. If $\beta = m_p = p-1,$ $a_k = \frac{{m_p}^{3/2}}{2 L}$, then 
    all accumulation points of $\{\vu_k\}_{k\geq 0}$ are in $\cu^*,$
    and, furthermore $\forall k \geq 0$:
    \begin{align*}
        &\frac{1}{k+1}\sum_{i=0}^k \|F(\vu_i)\|_{p^*}^2 \leq \frac{16 L^2 \phi_p(\vu^*, \vu_0)}{{m_p}^2 (k+1)}\\
        &\hspace{1.5cm}= O\Big(\frac{L^2\|\vu^* - \vu_0\|_p^2}{{(p-1)}^2(k+1)}\Big).
    \end{align*}
    In particular, within $k = O\big(\frac{L^2 \|\vu^* - \vu_0\|_p^2}{{(p-1)}^2\epsilon^2}\big)$  iterations \egpp~can output a point $\vu$ with $\|F(\vu)\|_{p^*}\leq \epsilon.$
    \item[(ii)] Let $p \in (2, \infty)$. If $\beta = \frac{1}{2},$ $\delta_k = \delta > 0$, $\Lambda = \big(\frac{q-2}{\delta q }\big)^{\frac{q-2}{2}}L^{q/2}$, and $a_k = \frac{1}{2\Lambda} = a$, then, 
    $\forall k \geq 0$:
    $$
        \frac{1}{k+1}\sum_{i=0}^k \|F(\vub_i)\|_{p^*}^{p^*} \leq \frac{2\|\vu^* - \vu_0\|_p^p}{a^{p^*}(k+1)} + \frac{2p \delta}{a^{p^* - 1}}.
    $$
    In particular, for any $\epsilon > 0,$ there is a choice of $\delta = \frac{\epsilon^2}{C_pL},$ where $C_p$ is a constant that only depends on $p$, such that \egpp~can output a point $\vu$ with $\|F(\vu)\|_{p^*} \leq \epsilon$ in at most
    $$
        k = O_p\bigg(\Big(\frac{L\|\vu^* - \vu_0\|_p}{\epsilon}\Big)^p\bigg)
    $$
    iterations. Here, the $O_p$ notation hides constants that only depend on $p.$
\end{itemize}
\end{restatable}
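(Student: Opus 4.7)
The plan is to invoke a generic one-step bound on the merit function $h_k$ (the lemma of the appendix, valid for any $\beta \in (0,1]$, any $\gamma > 0$ and any positive step sizes $a_k$), specialize it by setting $\rho = 0$ and zeroing the noise terms $\vetab_k, \veta_k$, and then tune $(\beta, \gamma, a_k)$ so that the three quadratic iterate-displacement terms on the right-hand side (those in $\|\vu_{k+1}-\vu_k\|_p^q$, $\|\vub_k-\vu_k\|_p^q$ and $\|\vub_k-\vu_{k+1}\|_p^q$) carry non-positive coefficients. What remains after telescoping is a $\phi_p$-potential difference plus at most one strictly negative squared-displacement term; the latter is converted into a gradient-norm bound by the conjugacy identity of Proposition~\ref{prop:step-to-grad}, and summing over $k$ uses the fact that $h_k \ge 0$ by Assumption~\ref{assmpt:cohypo} with $\rho = 0$ (i.e.\ an \eqref{eq:MVI} solution).

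For part (i), with $p \in (1,2]$ we have $q = 2$, $m_p = p-1$, $\Lambda_k = L$, and we may take $\delta_k = 0$. The choice $\beta = m_p$ zeros the coefficient of $\|\vu_{k+1}-\vu_k\|_p^2$; then with $\gamma = 1/\sqrt{m_p}$ and $a_k = m_p^{3/2}/(2L)$ a direct check shows the two remaining coefficients are exactly $-m_p/4$ and $-m_p^2/4$. Applying Proposition~\ref{prop:step-to-grad} to the first step of \eqref{eq:egp+} (where $\vz = (a_k/\beta)F(\vu_k)$ and $q = q^* = 2$) gives $\|\vub_k-\vu_k\|_p^2 = (a_k/\beta)^2\|F(\vu_k)\|_{p^*}^2 = (m_p/(4L^2))\|F(\vu_k)\|_{p^*}^2$, and telescoping from $0$ to $K$ yields the stated averaged bound. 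For the accumulation-point claim, strong convexity of $\tfrac12\|\cdot-\vu_0\|_p^2$ turns monotone decrease of $\phi_p(\vu^*,\vu_k)$ into a uniform bound on $\|\vu^*-\vu_k\|_p$, summability of $\|F(\vu_k)\|_{p^*}^2$ forces $\|F(\vu_k)\|_{p^*}\to 0$, and continuity of $F$ puts every cluster point of $\{\vu_k\}$ in $\cu^*$.

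For part (ii), with $p > 2$ we have $q = p$ and $m_p = 1$. The symmetric choice $\gamma = 1$, $\beta = 1/2$, $a = 1/(2\Lambda)$ (with $\Lambda$ coming from Proposition~\ref{prop:smooth-ub} at a fixed $\delta_k = \delta > 0$) makes the two coefficients of $\|\vub_k-\vu_k\|_p^p$ and $\|\vub_k-\vu_{k+1}\|_p^p$ vanish, while leaving a $-1/(2p)$ coefficient on $\|\vu_{k+1}-\vu_k\|_p^p$ and the additive residual $a\delta$. I then invoke Proposition~\ref{prop:step-to-grad} on the \emph{second} step of \eqref{eq:egp+} (whose prox is $\tfrac1p\|\cdot\|_p^p$, so $q = p$, $q^* = p^*$, and $\vz = aF(\vub_k)$) to get $\|\vu_{k+1}-\vu_k\|_p^p = a^{p^*}\|F(\vub_k)\|_{p^*}^{p^*}$. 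Using $h_k \ge 0$, telescoping, and $\phi_p(\vu^*,\vu_0) = \|\vu^*-\vu_0\|_p^p/p$ gives the averaged bound of the theorem. The iteration-complexity corollary is then obtained by balancing the two terms against $\epsilon^{p^*}$: picking $\delta = \Theta(\epsilon^2/L)$ and unwinding the definitions of $\Lambda$, $a$, and $p^*$ makes each term at most $\epsilon^{p^*}/2$ once $k = O_p((L\|\vu^*-\vu_0\|_p/\epsilon)^p)$, after routine conjugate-exponent algebra.

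The principal obstacle is the bookkeeping in part (ii): because $\tfrac1p\|\cdot-\vu_k\|_p^p$ is only $p$-uniformly convex (not strongly convex), the Lipschitz cross-term $\innp{F(\vub_k)-F(\vu_k),\vub_k-\vu_{k+1}}$ cannot be absorbed into the prox term by a single Young step as in the Euclidean case; it must instead be traded against the $q$-powered prox term through Proposition~\ref{prop:smooth-ub}, which is what introduces the free parameter $\delta$ (and the residual $a\delta$) into the $h_k$ bound. Verifying that the symmetric choice above actually zeros both relevant coefficients simultaneously, and that $\delta = \Theta(\epsilon^2/L)$ optimally balances the shrinking $\phi_p$ gap against this additive bias, is the delicate step and is what ultimately forces the slower $k^{-1/p}$ rate in place of the $k^{-1/2}$ Euclidean one.
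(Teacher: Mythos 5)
Your proposal is correct and follows essentially the same route as the paper: specialize the appendix lemma bounding $h_k$, choose $(\beta,\gamma,a_k)$ exactly as the paper does to zero or make negative the displacement coefficients, invoke Proposition~\ref{prop:step-to-grad} on the first prox step for $p\in(1,2]$ and the second for $p>2$, and telescope. Your coefficient checks ($-m_p/4$, $-m_p^2/4$ in part (i); the two vanishing coefficients and $-1/(2p)$ in part (ii)), the residual $a\delta$, and the $\delta=\Theta(\epsilon^2/L)$ balancing all match the paper's argument.
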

\ifaistats 
\else
\begin{proof}
Observe that, as $\vetab_i = \veta_i = \zeros,$ $\forall i \geq 0$ and $\rho = 0,$ Lemma~\ref{lemma:bound-on-hk-egpp} and the definition of $h_k$ give:
\begin{equation}\label{eq:egpp-det-h_k}
    \begin{aligned}
        0\leq h_k \leq&\; \phi_p(\vu^*, \vu_k) - \phi_p(\vu^*, \vu_{k+1}) +  \frac{\beta - m_p}{q}\|\vu_{k+1} - \vu_k\|_p^q \\
     &+ \frac{a_k \Lambda_k \gamma - \beta}{q}\|\vub_k - \vu_k\|_{p}^q + \frac{a_k \Lambda_k/\gamma - \beta m_p}{q}\|\vub_k - \vu_{k+1}\|_p^q + a_k \delta_k.
    \end{aligned}
\end{equation}

\noindent\textbf{Proof of Part (i).} In this case, we can set $\delta_k = 0$ (see Lemma~\ref{lemma:bound-on-hk-egpp}), $\Lambda_k = L,$  and $q = 2.$ Therefore, setting $\beta = m_p,$ $a_k = \frac{{m_p}^{3/2}}{2 L}$, and $\gamma = \frac{1}{\sqrt{m_p}}$ we get from Eq.~\eqref{eq:egpp-det-h_k} that
\begin{equation}
    \begin{aligned}\label{eq:h-k-egpp-rho-0}
        \phi_p(\vu^*, \vu_{k+1}) \leq \phi_p(\vu^*, \vu_k) - \frac{m_p}{4}\|\vub_k - \vu_k\|_p^2.
    \end{aligned}
\end{equation}
It follows that $\|\vub_k - \vu_k\|_p^2$ converges to zero as $k \to \infty.$ By the definition of $\vub_k$ and  Proposition~\ref{prop:step-to-grad}, $\frac{1}{2}\|\vub_k - \vu_k\|_p^2 = \frac{{a_k}^2}{2\beta^2}\|F(\vu_k)\|_{p^*}^2,$ and so $\|F(\vu_k)\|_{p^*}$ converges to zero as $k \to  \infty.$ Further, as $\phi_p(\vu^*, \vu_k) \leq \phi_p(\vu^*, \vu_0)< \infty$ and $\phi_p(\vu^*, \vu_k)\geq \frac{m_p}{2}\|\vu^* - \vu_k\|_p^2,$ $m_p>0,$ it follows that $\|\vu^* - \vu_k\|_p$ is bounded, and, thus, $\{\vu_k\}_{k\geq 0}$ is a bounded sequence. The proof that all accumulation points of $\{\vu_k\}_{k\geq 0}$ are in $\cu^*$ 
is standard and omitted (see the proof of Theorem~\ref{thm:convergence-of-eg+} for a similar argument). 

To bound $\frac{1}{k+1}\sum_{i=0}^k \|F(\vu_i)\|_{p^*}^2$, we  telescope the inequality from Eq.~\eqref{eq:h-k-egpp-rho-0} to get:
\begin{align*}
m_p\sum_{i=0}^k \|\vub_i - \vu_i\|_p^2 \leq 4(\phi_p(\vu^*, \vu_0) - \phi_p(\vu^*, \vu_{k+1}))\leq 4 \phi_p(\vu^*, \vu_0).    
\end{align*}
To complete the proof of this part, it remains to use the fact that $\|\vub_i - \vu_i\|_p^2 = \frac{{a_k}^2}{\beta^2}\|F(\vu_i)\|_{p^*}^2$ (already argued above), the definitions of $a_k$ and $\beta,$ and $m_p = p-1.$ The bound on $\phi_p(\vu^*, \vu_0)$ follows from the definition of $\phi_p$ in this case. In particular, if we denote $\psi(\vu) = \frac{1}{2}\|\vu - \vu_0\|_p^2,$ then $\phi_p(\vu^*, \vu_0) = D_{\psi}(\vu^*, \vu_0).$ Using the definition of Bregman divergence and the fact that, for this choice of $\psi,$ we have $\|\nabla \psi(\vu)\|_{p^*} = \|\vu - \vu_0\|_p,$ $\forall \vu \in \rr^d,$ (see the last part of  Proposition~\ref{prop:step-to-grad}) it follows that:
\begin{align*}
    \phi_p(\vu^*, \vu_0) &= \frac{1}{2}\|\vu^* - \vu_0\|_p^2 - \frac{1}{2}\|\vu_0 - \vu_0\|_p^2 - \innp{\nabla_{\vu} \Big(\frac{1}{2}\|\vu - \vu_0\|_p^2\Big)\Big|_{\vu = \vu_0}, \vu^* - \vu_0}\\
    &= \frac{1}{2}\|\vu^* - \vu_0\|_p^2.
\end{align*}

\noindent\textbf{Proof of Part (ii).} In this case, $q = p$, $\phi_p(\vu, \vv) = \frac{1}{p}\|\vu - \vv\|_p^p,$ and $m_p = 1.$  Using Proposition~\ref{prop:step-to-grad}, $\|\vu_k - \vub_k\|_p^p = \frac{{a_k}^{p^*}}{\beta^{p^*}}\| F(\vu_k)\|_{p^*}^{p^*}$ and $\|\vu_{k+1} - \vu_k\|_p^p = {a_k}^{p^*}\|F(\vub_k)\|_{p^*}^{p^*}.$ Combining with Eq.~\eqref{eq:egpp-det-h_k}, we have:
\begin{equation}\label{eq:h_k-p>2}
    \begin{aligned}
        0 \leq \;&\frac{1}{p}\|\vu^* - \vu_k\|_p^p - \frac{1}{p}\|\vu^* - \vu_{k+1}\|_p^p + \frac{(\beta - 1){a_k}^{p^*}}{p}\|F(\vub_k)\|_{p^*}^{p^*}\\ 
        &+ \frac{(a_k \Lambda_k \gamma - \beta){a_k}^{p^*}}{{p}\beta^{p^*}}\|F(\vu_k)\|_{p^*}^{p^*} + \frac{a_k \Lambda_k/\gamma-\beta}{p}\|\vub_k - \vu_{k+1}\|_p^p + a_k \delta_k.
    \end{aligned}
\end{equation}
Now let $\gamma = 1,$ $\beta = \frac{1}{2},$ $\delta_k = \delta > 0$, and $a_k = \frac{1}{2\Lambda_k} = \frac{1}{2\Lambda} = a$. Then $a_k \Lambda_k\gamma-\beta = a_k \Lambda_k/\gamma-\beta = 0$ and Eq.~\eqref{eq:h_k-p>2} simplifies to:
\begin{equation}\notag
    \frac{{a}^{p^*}}{2p}\|F(\vub_k)\|_{p^*}^{p^*} \leq \frac{1}{p}\|\vu^* - \vu_k\|_p^p - \frac{1}{p}\|\vu^* - \vu_{k+1}\|_p^p + a{\delta}.
\end{equation}
 Telescoping the last inequality and then dividing by $\frac{{a_k}^{p^*}(k+1)}{2p},$ we have:
\begin{equation}\label{eq:p>2-op-bnd}
    \frac{1}{k+1}\sum_{i=0}^k \|F(\vub_i)\|_{p^*}^{p^*} \leq \frac{2\|\vu^* - \vu_0\|_p^p}{a^{p^*}(k+1)} + \frac{2p \delta}{a^{p^* - 1}}. 
\end{equation}
Now, for \egpp~to be able to output a point $\vu$ with $\|F(\vu)\|_{p^*} \leq \epsilon,$ it suffices to show that for some choice of $\delta$ and $k$ we can make the right-hand side of Eq.~\eqref{eq:p>2-op-bnd} at most $\epsilon^{p^*}$. This is true because then \egpp~can output the point $\vub_i = \argmin_{0\leq i \leq k}\|F(\vub_i)\|_{p^*}.$ For stochastic setups, the guarantee would be in expectation, and \egpp~could output a point $\vub_i$ with $i$ chosen uniformly at random from $\{0,\dots, k\}$, as discussed in the proof of Theorem~\ref{thm:convergence-of-eg+}. 

Observe first that, as $\Lambda = \big(\frac{p - 2}{p\delta}\big)^{\frac{p-2}{2}}L^{p/2}$ and $p^* = \frac{p}{p-1},$ we have that:
\begin{align*}
    \frac{\delta}{{a}^{p^* - 1}} &= \delta (2\Lambda)^{p^* -1} = \delta 2^{\frac{1}{p-1}}\Lambda^{\frac{1}{p-1}}\\
    &= 2^{\frac{1}{p-1}}\delta^{\frac{p}{2(p-1)}}\Big(\frac{p-2}{p}\Big)^{\frac{p-2}{2(p-1)}}L^{\frac{p}{2(p-1)}}.
\end{align*}
Setting $\frac{2p \delta}{{a}^{p^* - 1}} \leq \frac{\epsilon^{p^*}}{2}$, recalling that $p^* = \frac{p}{p-1}$, and rearranging, we have:
$$
    \delta^{\frac{p^*}{2}} \leq \frac{\epsilon^{p^*}}{2^{\frac{2p-1}{p}}p}\Big(\frac{p}{p-2}\Big)^{\frac{p-2}{2p}p^*} L^{-p^*/2}.
$$
Equivalently:
$$
    \delta \leq \frac{\epsilon^2}{L \cdot 2^{\frac{2(2p-1)}{p}}p^{\frac{2(p-1)}{p}} (\frac{p-2}{p})^{\frac{p-2}{p}}}.
$$
It can be verified numerically that  $(\frac{p-2}{p})^{\frac{p-2}{p}}$ is a constant between $\frac{1}{e}$ and $1,$ while it is clear that  $2^{\frac{2(2p-1)}{p}}p^{\frac{2(p-1)}{p}} = O(p^2)$ is a constant that only depends on $p$. Hence, it suffices to set $\delta = \frac{\epsilon^2}{C_p L},$ where $C_p = 2^{\frac{2(2p-1)}{p}}p^{\frac{2(p-1)}{p}}.$

It remains to bound the number of iterations $k$ so that $\frac{2\|\vu^* - \vu_0\|_p^p}{a^{p^*}(k+1)} \leq \frac{\epsilon^{p^*}}{2}.$ Equivalently, we need $k+1 \geq \frac{4\|\vu^* - \vu_0\|_p^p}{a^{p^*}\epsilon^{p^*}}$. Plugging $\delta = \frac{\epsilon^2}{C_p L}$ into the definition of $\Lambda,$ using that $p^* = \frac{p}{p-1},$ and simplifying, we have:
\begin{align*}
    a^{p^*} &= (2\Lambda)^{p^*} = 2^{\frac{p}{p-1}} \Big(\frac{p-2}{p\delta}\Big)^{\frac{p-2}{2}\cdot\frac{p}{p-1}}L^{\frac{p}{2}\cdot\frac{p}{p-1}}\\
    &= O_p\bigg(\Big(\frac{1}{\epsilon}\Big)^{\frac{p(p-2)}{p-1}}L^p\bigg).
\end{align*}
Thus, 
\begin{align*}
    k = O_p\bigg(\Big(\frac{1}{\epsilon}\Big)^{\frac{p(p-2)}{p-1} + \frac{p}{p-1}}L^p\|\vu^* - \vu_0\|_p^p\bigg) = O_p\bigg(\Big(\frac{L\|\vu^* - \vu_0\|_p}{\epsilon}\Big)^p\bigg),
\end{align*}
as claimed.
\end{proof}
\fi

\begin{remark}
There are significant technical obstacles in generalizing the results from Theorem~\ref{thm:egpp_deterministic} to settings with $\rho > 0.$ In particular, when $p \in (1, 2),$ the proof fails because we take $\phi_p(\vu^*, \vu)$ to be the Bregman divergence of $\|\cdot - \vu_0\|_p^2$, and relating $\|\vub_k - \vu_k\|_p$ to $\|F(\vu_k)\|_{p^*}$ would require $\|\cdot\|_p^2$ to be smooth, which is not true. If we had, instead, used $\|\vu^* - \vu\|_p^2$ in place of $\phi_p(\vu^*, \vu)$, we would have incurred $\frac{1}{2}\|\vu^* - \vu_k\|_p^2 - \frac{m_p}{2}\|\vu^* - \vu_{k+1}\|_p^2$ in the upper bound on $h_k,$ which would not telescope, as in this case $m_p < 1.$ In the case of $p > 2,$ the challenges come from a delicate relationship between the step sizes $a_k$ and error terms $\delta_k.$ It turns out that it is possible to  guarantee local convergence (in the region where $\|F(\vub_k)\|_2$ is bounded by a constant less than 1) with $\rho > 0$, but $\rho$ would need to scale with $\mathrm{poly}(\epsilon)$ in this case.  As this is a weak result whose usefulness is unclear, we have omitted it.
\end{remark}

\paragraph{Stochastic oracle access.}
\ifaistats
To obtain results for the stochastic setups, we mainly need to bound stochastic error terms which decompose from the analysis of deterministic setups, as in the following lemma. 

\else
To obtain results for stochastic oracle access to $F,$ we only need to bound the terms $\mathcal{E}^s \defeq -a_k \innp{\vetab_k, \vub_k - \vu^*} - a_k\innp{\vetab_k - \veta_k, \vub_k - \vu_{k+1}}$ from Lemma~\ref{lemma:bound-on-hk-egpp} corresponding to the stochastic error in expectation, while for the rest of the analysis we can appeal to the results for the deterministic oracle access to $F.$ In the case of $p=2$, there is one additional term that appears in $h_k$ due to replacing $F(\vub_k)$ with $\tF(\vub_k).$ This term is simply equal to:
\begin{equation}
\begin{aligned}
    \frac{a_k\rho}{2}\ee[\|\tF(\vub_k)\|_2^2 - \|F(\vub_k)\|_2^2| \cfb_k]= \frac{a_k\rho}{2}\ee[\|F(\vub_k) + \vetab_k\|_2^2 - \|F(\vub_k)\|_2^2| \cfb_k] = \frac{a_k\rho}{2} {\bar{\sigma}_k}^2.
\end{aligned}
\end{equation}

We start by bounding the stochastic error $\mathcal{E}^s$ in expectation.
\fi

\begin{restatable}{lemma}{lemmastocherr}\label{lemma:stoch-err}
Let $\mathcal{E}^s = -a_k \innp{\vetab_k, \vub_k - \vu^*} - a_k\innp{\vetab_k - \veta_k, \vub_k - \vu_{k+1}}$, where $\vetab_k$ and $\veta_k$ are defined as in Eq.~\eqref{eq:noise-vectors} and all the assumptions of Theorem~\ref{thm:egpp-stochastic} below 
apply. Then, for $q$ defined by Eq.~\eqref{eq:q-def} and any $\tau>0$:
$$
    \ee[\mathcal{E}^s] \leq \frac{2^{q^*/2}{a_k}^{q^*}({\sigma_k}^2 + {\bar{\sigma}_k}^2)^{q^*/2}}{q^*\tau^{q^*}} + \ee\Big[\frac{\tau^q}{q}\| \vub_k - \vu_{k+1}\|_p^q\Big],
$$
where the expectation is w.r.t.~all the randomness in the algorithm.
\end{restatable}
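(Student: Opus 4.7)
The plan is to split $\mathcal{E}^s$ into the two inner-product terms and handle each separately, exploiting that $\vetab_k,\veta_k$ are mean-zero conditional on the appropriate filtrations. First I would observe that $\vub_k$ is $\cfb_k$-measurable, so that by the tower property and the oracle assumption $\ee[\vetab_k \mid \cfb_k] = \zeros$ (from \eqref{eq:stoch-est-F}),
\[
    \ee\bigl[-a_k\innp{\vetab_k,\vub_k - \vu^*}\bigr] \;=\; \ee\bigl[\ee[\,-a_k\innp{\vetab_k,\vub_k - \vu^*}\mid \cfb_k]\bigr] \;=\; 0.
\]
So only the second term contributes in expectation.

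For the second term I would apply H\"older's inequality followed by a scaled Young's inequality with exponents $q$ and $q^*$ (noting $1/q + 1/q^* = 1$). Concretely, for any $\tau>0$,
\[
    -a_k\innp{\vetab_k - \veta_k,\vub_k - \vu_{k+1}}
    \;\le\; a_k\|\vetab_k - \veta_k\|_{p^*}\,\|\vub_k - \vu_{k+1}\|_p
    \;\le\; \frac{a_k^{q^*}\|\vetab_k - \veta_k\|_{p^*}^{q^*}}{q^*\tau^{q^*}} + \frac{\tau^q}{q}\|\vub_k - \vu_{k+1}\|_p^q,
\]
where Young's is applied to the pair $(a_k\|\vetab_k-\veta_k\|_{p^*}/\tau,\tau\|\vub_k - \vu_{k+1}\|_p)$.

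Next I would bound $\ee[\|\vetab_k - \veta_k\|_{p^*}^{q^*}]$. Triangle inequality and the elementary bound $(a+b)^2 \le 2(a^2+b^2)$ give $\|\vetab_k - \veta_k\|_{p^*}^2 \le 2(\|\vetab_k\|_{p^*}^2 + \|\veta_k\|_{p^*}^2)$. Since $q \in \{p,2\}$ with $p>1$, we have $q^*\le 2$ in all cases covered by \eqref{eq:q-def} (with $q^*=2$ when $p\in(1,2]$ and $q^*=p^* \in (1,2)$ when $p>2$). So the map $t\mapsto t^{q^*/2}$ is concave on $[0,\infty)$ and Jensen's inequality combined with the oracle variance bounds in \eqref{eq:stoch-est-F} yields
\[
    \ee\bigl[\|\vetab_k - \veta_k\|_{p^*}^{q^*}\bigr]
    \;=\; \ee\bigl[(\|\vetab_k - \veta_k\|_{p^*}^2)^{q^*/2}\bigr]
    \;\le\; \bigl(\ee[\|\vetab_k - \veta_k\|_{p^*}^2]\bigr)^{q^*/2}
    \;\le\; 2^{q^*/2}\,({\sigma_k}^2 + {\bar\sigma_k}^2)^{q^*/2}.
\]
Assembling the two pieces produces exactly the claimed bound on $\ee[\mathcal{E}^s]$.

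The only subtlety, and what I'd call the main (mild) obstacle, is keeping the exponents and the constant $2^{q^*/2}$ consistent across the two regimes $p\in(1,2]$ (where $q^*=2$ and Jensen is tight/trivial) and $p>2$ (where $q^* \in (1,2)$ and the concavity of $t\mapsto t^{q^*/2}$ is essential). Everything else is a routine combination of H\"older, Young, and the tower property.
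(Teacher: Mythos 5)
Your proposal is correct and takes essentially the same approach as the paper's proof: the tower property to kill the $\innp{\vetab_k,\vub_k-\vu^*}$ term, H\"older followed by Young's inequality with exponents $(q,q^*)$ on the second term, and then triangle inequality plus Jensen's (using concavity of $t\mapsto t^{q^*/2}$ for $q^*\in(1,2]$) together with $(a+b)^2\le 2(a^2+b^2)$ and the variance bounds to control $\ee[\|\vetab_k-\veta_k\|_{p^*}^{q^*}]$. The only cosmetic difference is the order in which the triangle inequality and the pointwise squaring bound are applied; the resulting chain of inequalities and the final constant $2^{q^*/2}$ are identical to the paper's.
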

\ifaistats\else
\begin{proof}
Let us start by bounding $-a_k \innp{\vetab_k, \vub_k - \vu^*}$ first. Conditioning on $\cfb_k$, $\vetab_k$ is independent of $\vub_k$ and $\vu^*$, and, thus: 
\begin{align*}
    \ee[-a_k \innp{\vetab_k, \vub_k - \vu^*}] = \ee\big[ \ee[-a_k \innp{\vetab_k, \vub_k - \vu^*}|\cfb_k] \big] = 0.
\end{align*}
The second term, $- a_k\innp{\vetab_k - \veta_k, \vub_k - \vu_{k+1}},$ can be bounded using H\"{older}'s inequality and Young's inequality as follows:
\begin{align*}
    \ee\big[- a_k\innp{\vetab_k - \veta_k, \vub_k - \vu_{k+1}}\big] &\leq \ee\big[a_k\|\vetab_k - \veta_k\|_{p^*}\| \vub_k - \vu_{k+1}\|_p\big]\\
    &\leq \ee\Big[\frac{{a_k}^{q^*}\|\vetab_k - \veta_k\|_{p^*}^{q^*}}{q^*\tau^{q^*}}\Big] + \ee\Big[\frac{\tau^q}{q}\| \vub_k - \vu_{k+1}\|_p^q\Big].
\end{align*}
It remains to bound $\ee\big[\|\vetab_k - \veta_k\|_{p^*}^{q^*}\big].$ Using the triangle inequality, 
\begin{align*}
    \ee\big[\|\vetab_k - \veta_k\|_{p^*}^{q^*}\big] &\leq \ee\Big[\big(\|\vetab_k\|_{p^*} + \|\veta_k\|_{p^*}\big)^{q^*}\Big]\\
    &= \ee\Big[\big(\big(\|\vetab_k\|_{p^*} + \|\veta_k\|_{p^*}\big)^{2}\big)^{q^*/2}\Big]\\
    &\leq \Big(\ee\big[\big(\|\vetab_k\|_{p^*} + \|\veta_k\|_{p^*}\big)^{2}\big]\Big)^{q^*/2},
\end{align*}
where the last line is by Jensen's inequality, as $q^* \in (1, 2],$ and so $(\cdot)^{q^*/2}$ is concave. Using Young's inequality and linearity of expectation:
\begin{align*}
    \ee\big[\big(\|\vetab_k\|_{p^*} + \|\veta_k\|_{p^*}\big)^{2}\big] &\leq 2 \Big(\ee\big[\|\vetab_k\|_{p^*}^2\big] + \ee\big[\|\veta_k\|_{p^*}^2\big]\Big)\\
    &\leq 2({\sigma_k}^2 + {\bar{\sigma}_k}^2).
\end{align*}
Putting everything together:
\begin{align*}
    \ee\big[\|\vetab_k - \veta_k\|_{p^*}^{q^*}\big] \leq 2^{q^*/2}({\sigma_k}^2 + {\bar{\sigma}_k}^2)^{q^*/2} 
\end{align*}
and
\begin{align*}
\ee[\mathcal{E}^s] &= \ee\big[- a_k\innp{\vetab_k - \veta_k, \vub_k - \vu_{k+1}}\big]\\
&\leq \frac{2^{q^*/2}{a_k}^{q^*}({\sigma_k}^2 + {\bar{\sigma}_k}^2)^{q^*/2}}{q^*\tau^{q^*}} + \ee\Big[\frac{\tau^q}{q}\| \vub_k - \vu_{k+1}\|_p^q\Big],
\end{align*}
as claimed.
\end{proof}

We are now ready to bound the total oracle complexity of \egpp~(and its special case \egp), as follows.
\fi

\begin{restatable}{theorem}{thmegppstoch}\label{thm:egpp-stochastic}
Let $p > 1$ and let $F:\rr^d\to\rr^d$ be an arbitrary $L$-Lipschitz operator w.r.t.~$\|\cdot\|_p$ that satisfies Assumption~\ref{assmpt:cohypo} for some $\vu^* \in \cu^*$. Given an arbitrary initial point $\vu_0 \in \rr^d,$ let the sequences of points $\{\vu_i\}_{i\geq 1}$, $\{\vub_i\}_{i\geq 0}$ evolve according to \eqref{eq:egp+} for some $\beta \in (0, 1]$ and positive step sizes $\{a_i\}_{i\geq 0}.$ Let the variance of a single query to the stochastic oracle $\tF$ be bounded by some $\sigma^2 < \infty.$ 
\begin{itemize}
    \item[(i)] Let $p = 2$ and $\rho \in \big[0, \bar{\rho}\big),$ where $\bar{\rho} = \frac{1}{4\sqrt{2}L}.$ If $\beta = \frac{1}{2}$ and $a_k = \frac{1}{2\sqrt{2}L},$ then \egpp~can output a point $\vu$ with $\ee[\|\tF(\vu)\|_2] \leq \epsilon$ with at most
    $$
        O\Big(\frac{L\|\vu^* - \vu_0\|_2^2}{\epsilon^2(\bar{\rho} - \rho)} \Big(1 + \frac{\sigma^2}{L\epsilon^2(\bar{\rho} - \rho)}\Big)\Big)
    $$
    oracle queries to $\tF.$ 
    \item[(ii)] Let $p \in (1, 2]$ and $\rho = 0.$ If $a_k = \frac{{m_p}^{3/2}}{2L}$ and $\beta = m_p$, then \egpp~can output a point $\vu$ with $\ee[\|\tF(\vu)\|_{p^*}] \leq \epsilon$ with at most
    $$
        O\Big(\frac{L^2\|\vu^* - \vu_0\|_p^2}{{m_p}^2\epsilon^2}\Big(1 + \frac{\sigma^2}{m_p \epsilon^2}\Big)\Big)
    $$
    oracle queries to $\tF,$ where $m_p = p-1.$ 
    \item[(iii)] Let $p > 2$ and $\rho = 0.$ If $\beta = \frac{1}{2}$ and $a_k = a = \frac{1}{4\Lambda},$ then \egpp~can output a point $\vu$ with $\ee[\|\tF(\vu)\|_{p^*}] \leq \epsilon$ with at most
    $$
        O_p\bigg(\Big(\frac{L\|\vu^* - \vu_0\|_p}{\epsilon}\Big)^p\Big(1 + \Big(\frac{\sigma}{\epsilon}\Big)^{p^*}\Big)\bigg)
    $$
    oracle queries to $\tF,$ where $p^* = \frac{p}{p-1}.$
\end{itemize}
\end{restatable}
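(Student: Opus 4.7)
The plan is to extend the deterministic analysis of Theorem~\ref{thm:egpp_deterministic} by combining the generic inequality of Lemma~\ref{lemma:bound-on-hk-egpp} with the stochastic error bound of Lemma~\ref{lemma:stoch-err}. Taking expectations of the inequality in Lemma~\ref{lemma:bound-on-hk-egpp}, the two cross-terms involving $\vetab_k$ and $\veta_k$ are bounded by Lemma~\ref{lemma:stoch-err} by $\frac{2^{q^*/2}a_k^{q^*}(\sigma_k^2+\bar\sigma_k^2)^{q^*/2}}{q^*\tau^{q^*}}+\ee[\frac{\tau^q}{q}\|\vub_k-\vu_{k+1}\|_p^q]$ for any $\tau>0$. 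I choose $\tau^q$ as a fixed fraction of the negative coefficient $|\beta m_p-a_k\Lambda_k/\gamma|$ of $\|\vub_k-\vu_{k+1}\|_p^q$ that is created by the parameter choices of the theorem, so that the extra $\|\vub_k-\vu_{k+1}\|_p^q$ summand is absorbed. In part (i), $p=2$, the merit function~\eqref{eq:h-k-egpp} contains $\|F(\vub_k)\|^2$ rather than $\|\tF(\vub_k)\|^2$, which contributes an additional $\frac{a_k\rho}{2}\bar\sigma_k^2$ after taking expectation; this has the same structural form as the noise term and is absorbed identically. The step sizes stated in the theorem are slightly smaller than in Theorem~\ref{thm:egpp_deterministic} (shrunk by $\sqrt{2}$ in part (i), and by $2$ in part (iii)) precisely to create the margin needed for this absorption, while in part (ii) the deterministic step size already leaves sufficient negative margin on $\|\vub_k-\vu_{k+1}\|_p^2$.

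After this, telescoping from $k=0$ to $K-1$, using $\ee[h_k]\geq 0$ and $\phi_p(\vu^*,\vu_K)\geq 0$, and dividing by $K$, I obtain a bound of the schematic form
\begin{equation*}
    \frac{1}{K}\sum_{k=0}^{K-1}\ee\big[\|F(\vub_k)\|_{p^*}^{q^*}\big]\;\leq\;\frac{C_1\,\phi_p(\vu^*,\vu_0)}{K}\;+\;C_2\,\sigma^2,
\end{equation*}
with constants $C_1,C_2$ depending on $p,L,\beta$ (and, in part (i), on $\bar\rho-\rho$). Drawing the output index uniformly from $\{0,\ldots,K-1\}$ turns the left-hand side into $\ee[\|F(\vub_k)\|_{p^*}^{q^*}]$ with $k$ random. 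To pass from $F$ to $\tF$ at the output, I mini-batch each oracle call with $B$ independent samples so that the effective single-query variance becomes $\sigma^2/B$; then $\|\tF(\vu)\|_{p^*}^2\leq 2\|F(\vu)\|_{p^*}^2+2\|\vetab\|_{p^*}^2$ and Jensen's inequality give $\ee[\|\tF(\vu)\|_{p^*}]\leq\sqrt{2\ee[\|F(\vu)\|_{p^*}^2]+2\sigma^2/B}$. In part (iii), an additional application of Jensen with exponent $2/p^*$ is used to pass from $\ee[\|F(\vub_k)\|_{p^*}^{p^*}]$ to $\ee[\|F(\vub_k)\|_{p^*}^2]$. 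Choosing $K$ so that $C_1\phi_p(\vu^*,\vu_0)/K\leq\epsilon^{q^*}/4$ reproduces the deterministic iteration counts, and choosing $B$ so that the noise residual is at most $\epsilon^{q^*}/4$ yields $B=O(\sigma^2/\epsilon^2)$ in parts (i) and (ii) and $B=O_p((\sigma/\epsilon)^{p^*})$ in part (iii). The total oracle complexity $2KB$ then matches the three stated bounds.

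The main obstacle is part (iii). The deterministic bound there already required a delicate simultaneous choice of the regularization parameter $\delta$ in Proposition~\ref{prop:smooth-ub} and the step size $a=1/(2\Lambda(\delta))$; in the stochastic setting the absorption parameter $\tau$ and batch size $B$ become two further coupled knobs that must be balanced against the accuracy $\epsilon$, while the bound one is telescoping involves the non-quadratic quantity $\|F(\vub_k)\|_{p^*}^{p^*}$. I expect the right balance to be $\delta=\Theta(\epsilon^2/L)$ as in Theorem~\ref{thm:egpp_deterministic}(ii) (so $a=\Theta(1/\Lambda)$ up to the additional factor of $2$), a constant $\tau$ chosen once $\beta$ and $\gamma$ are fixed, and $B=\Theta((\sigma/\epsilon)^{p^*})$, so that the per-iteration noise contribution $a^{q^*}\sigma^2/B=O(\epsilon^{p^*})$ matches the order of the deterministic bound on $\ee[\|F(\vub_k)\|_{p^*}^{p^*}]$.
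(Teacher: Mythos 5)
Your overall structure matches the paper's proof: combine Lemma~\ref{lemma:bound-on-hk-egpp} with Lemma~\ref{lemma:stoch-err}, choose $\tau$ so that the $\|\vub_k - \vu_{k+1}\|_p^q$ contribution is absorbed into the negative coefficient created by the parameter choices, account for the extra $\frac{a_k\rho}{2}\bar\sigma_k^2$ in part~(i), telescope, use a uniformly random output index, and mini-batch with $n$ samples per iteration so that the per-call variance becomes $\sigma^2/n$. The observation that the step sizes in parts~(i) and~(iii) are shrunk (by $\sqrt{2}$ and $2$ respectively) to create slack, while part~(ii) already has slack from the deterministic analysis, is exactly what the paper does.

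Two remarks on details. First, the quantity that naturally appears on the left of the telescoped inequality is $\ee[\|\tF(\vub_k)\|_{p^*}^{q^*}]$ (or $\ee[\|\tF(\vu_k)\|_{p^*}^{2}]$ in part~(ii)), \emph{not} $\ee[\|F(\vub_k)\|_{p^*}^{q^*}]$: the algorithm updates use $\tF$, so $\|\vu_{k+1} - \vu_k\|$ and $\|\vub_k - \vu_k\|$ evaluate in terms of $\tF$ via Proposition~\ref{prop:step-to-grad}. This makes your extra ``pass from $F$ to $\tF$'' step unnecessary; the output guarantee $\ee[\|\tF(\vu)\|_{p^*}]\le\epsilon$ follows immediately from the telescoped bound by a single application of Jensen's inequality with the concave map $x\mapsto x^{1/q^*}$ (or $x\mapsto x^{1/2}$ when $q^*=2$). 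Second, your stated use of ``Jensen with exponent $2/p^*$ to pass from $\ee[\|F(\vub_k)\|_{p^*}^{p^*}]$ to $\ee[\|F(\vub_k)\|_{p^*}^2]$'' in part~(iii) goes in the wrong direction: since $2/p^*>1$, the map $x\mapsto x^{2/p^*}$ is convex, so Jensen gives $(\ee[X^{p^*}])^{2/p^*}\le\ee[X^2]$, a \emph{lower} bound on $\ee[X^2]$ rather than an upper bound. Fortunately this step is not needed --- once you have $\frac{1}{k+1}\sum_i\ee[\|\tF(\vub_i)\|_{p^*}^{p^*}]\le\epsilon^{p^*}$, drawing $i$ uniformly and applying Jensen with the \emph{concave} map $x\mapsto x^{1/p^*}$ gives $\ee[\|\tF(\vub_i)\|_{p^*}]\le\epsilon$ directly. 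With these two corrections the argument is the one the paper gives.
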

\ifaistats
\else
\begin{proof}
Combining Lemmas~\ref{lemma:bound-on-hk-egpp} and \ref{lemma:stoch-err}, we have, $\forall k \geq 0$:
\begin{equation}\label{eq:hk-egpp-stoch}
    \begin{aligned}
       0 \leq \ee[ h_k] \leq&\; \frac{2^{q^*/2}{a_k}^{q^*}({\sigma_k}^2 + {\bar{\sigma}_k}^2)^{q^*/2}}{q^*\tau^{q^*}} + \ee\Big[\frac{\tau^q}{q}\| \vub_k - \vu_{k+1}\|_p^q\Big] + \frac{a_k \rho \bar{\sigma_k}^2}{2} + \ee\Big[\frac{a_k \rho}{2}\|\tF(\vub_k)\|_{p^*}^2\Big] \\
     &+ \ee\Big[\phi_p(\vu^*, \vu_k) - \phi_p(\vu^*, \vu_{k+1}) +  \frac{\beta - m_p}{q}\|\vu_{k+1} - \vu_k\|_p^q\Big]\\
     &+ \ee\Big[\frac{a_k \Lambda_k \gamma - \beta}{q}\|\vub_k - \vu_k\|_{p}^q + \frac{a_k \Lambda_k/\gamma - \beta m_p}{q}\|\vub_k - \vu_{k+1}\|_p^q + a_k \delta_k\Big],
    \end{aligned}
\end{equation}

\noindent\textbf{Proof of Part (i).} In this case, $q = 2$, $m_p = 1,$ $\delta = 0,$ $\Lambda_k = L,$ and $\phi_p(\vu^*, \vu) = \frac{1}{2}\|\vu^* - \vu\|_2^2,$ and, further, $\vu_{k+1} - \vu_k = -a_k F(\vub_k),$ so Eq.~\eqref{eq:hk-egpp-stoch} simplifies to
\begin{equation}\notag
    \begin{aligned}
       0 \leq \ee[ h_k] \leq&\; \frac{2{a_k}^2({\bar{\sigma_k}}^2 + {\bar{\sigma}_k}^2)}{2\tau^2}  + \frac{a_k \rho {\sigma_k}^2}{2} \\
     &+ \ee\Big[\frac{1}{2}\|\vu^* - \vu_k\|_2^2 - \frac{1}{2}\|\vu^* - \vu_{k+1}\|_2^2 +  \frac{{a_k}^2(\beta - 1) + a_k\rho}{2}\|\tF(\vub_k)\|_2^2\Big]\\
     &+ \ee\Big[\frac{a_k L \gamma - \beta}{2}\|\vub_k - \vu_k\|_{2}^2 + \frac{a_k L/\gamma - \beta + \tau^2}{2}\|\vub_k - \vu_{k+1}\|_2^2\Big].
    \end{aligned}
\end{equation}
Taking $\beta = \frac{1}{2},$ $\tau^2 = \frac{1}{4},$ $\gamma = \sqrt{2},$ and $a_k = \frac{1}{2\sqrt{2}L},$ and recalling that $\bar{\rho} = \frac{1}{4\sqrt{2}L},$ we have:
\begin{equation}\notag
    a_k (\bar{\rho} - \rho)\ee\big[\|\tF(\vub_k)\|_2^2\big] \leq \ee\big[\|\vu^* - \vu_k\|_2^2 - \|\vu^* - \vu_{k+1}\|_2^2\big] + 4{{a_k}^2({\sigma_k}^2 + {\bar{\sigma}_k)}^2}  + \frac{a_k \rho \bar{\sigma_k}^2}{2}.
\end{equation}
Telescoping the last inequality and dividing both sides by $a_k (\bar{\rho} - \rho)(k+1),$ we get:
\begin{equation}\notag
    \frac{1}{k+1}\sum_{i=0}^k \ee\big[\|\tF(\vub_i)\|_2^2\big] \leq \frac{2\sqrt{2}L\|\vu^* - \vu_0\|_2^2}{(k+1)(\bar{\rho} - \rho)} + \frac{\sqrt{2}\sum_{i=0}^k ({\sigma_i}^2 + {\bar{\sigma}_i}^2)}{L(\bar{\rho} - \rho)(k+1)} + \frac{\rho \sum_{i=0}^k {\bar{\sigma}_i}^2}{2(k+1)(\bar{\rho} - \rho)}.
\end{equation}
In particular, if the variance of a single sample of $\tF$ evaluated at an arbitrary point is $\sigma^2$ and we take $n$ samples of $\tF$ in each iteration, then:
\begin{equation}\notag
    \frac{1}{k+1}\sum_{i=0}^k \ee\big[\|\tF(\vub_i)\|_2^2\big] \leq \frac{2\sqrt{2}L\|\vu^* - \vu_0\|_2^2}{(k+1)(\bar{\rho} - \rho)} + \frac{\sigma^2(4\sqrt{2}/L + \rho)}{2n(\bar{\rho} - \rho)}.
\end{equation}
To finish the proof of this part, we require that both terms on the right-hand side of the last inequality are bounded by $\frac{\epsilon^2}{2}.$ For the first term, this leads to:
$$
    k = \left\lceil \frac{4\sqrt{2}L\|\vu^* - \vu_0\|_2^2}{\epsilon^2(\bar{\rho} - \rho)} - 1\right\rceil = O\Big(\frac{L\|\vu^* - \vu_0\|_2^2}{\epsilon^2(\bar{\rho} - \rho)}\Big).
$$
For the second term, the bound is:
$$
    n = \left\lceil  \frac{2\sigma^2(4\sqrt{2}/L + \rho)}{\epsilon^2(\bar{\rho} - \rho)} \right \rceil = O\Big(\frac{\sigma^2}{L\epsilon^2(\bar{\rho} - \rho)}\Big).
$$
Thus, the total number of required oracle queries to $\tF$ is bounded by:
$$
    k(1 + n) = O\Big(\frac{L\|\vu^* - \vu_0\|_2^2}{\epsilon^2(\bar{\rho} - \rho)} \Big(1 + \frac{\sigma^2}{L\epsilon^2(\bar{\rho} - \rho)}\Big)\Big).
$$
As discussed before, $\vub_i$ with $i$ chosen uniformly at random from $\{0, \dots, k\}$ will satisfy $\|\tF(\vub_i)\|_2 \leq \epsilon$ in expectation.

\noindent\textbf{Proof of Part (ii).} In this case, $q = 2$, $m_p = p-1,$ $\delta = 0,$ $\Lambda_k = L,$ and $\rho = 0.$ Thus, Eq.~\eqref{eq:hk-egpp-stoch} simplifies to:
\begin{equation}\notag
    \begin{aligned}
       0 \leq \ee[ h_k] \leq&\; \frac{2{a_k}^2({\sigma_k}^2 + {\bar{\sigma}_k}^2)}{2\tau^2} \\
     &+ \ee\Big[\phi_p(\vu^*, \vu_k) - \phi_p(\vu^*, \vu_{k+1}) +  \frac{\beta - m_p}{2}\|\vu_{k+1} - \vu_k\|_p^2\Big]\\
     &+ \ee\Big[\frac{a_k L \gamma - \beta}{2}\|\vub_k - \vu_k\|_{p}^2 + \frac{a_k L/\gamma - \beta m_p + \tau^2}{2}\|\vub_k - \vu_{k+1}\|_p^2\Big].
    \end{aligned}
\end{equation}
In this case, the same choices for $a_k$ and $\beta$ as in the deterministic case suffice. In particular, let $a_k = \frac{{m_p}^{3/2}}{2L},$ $\beta = m_p$, $\gamma = \frac{1}{\sqrt{m_p}},$ and $\tau^2 = \frac{{m_p}^2}{2}$. Then, using the fact that $\frac{1}{2}\|\vub_k - \vu_k\|_p^2 = \frac{{a_k}^2}{2\beta^2}\|\tF(\vu_k)\|_{p^*}^2,$ from Proposition~\ref{prop:step-to-grad}, we have
\begin{equation}\notag
    \frac{{a_k}^2m_p}{4\beta^2}\ee\big[\|\tF(\vu_k)\|_{p^*}^2\big] \leq \ee\Big[\phi_p(\vu^*, \vu_k) - \phi_p(\vu^*, \vu_{k+1})\Big] + \frac{{a_k}^2({\sigma_k}^2 + {\bar{\sigma}_k}^2)}{\tau}.
\end{equation}
Telescoping the last inequality and dividing both sides by $(k+1)\frac{{a_k}^2m_p}{4\beta^2},$ we have:
\begin{equation}\label{eq:p<2-stoch}
    \frac{1}{k+1}\sum_{i=0}^k \ee\big[\|\tF(\vu_i)\|_{p^*}^2\big] \leq \frac{16 L^2\phi_p(\vu^*, \vu_0)}{(k+1){m_p}^2} + \frac{8\sum_{i=0}^k ({\sigma_i}^2 + {\bar{\sigma_i}}^2)}{(k+1)m_p}.
\end{equation}
Now let ${\sigma_i}^2 = {\bar{\sigma}_i}^2 = \sigma^2/n$, where $\sigma^2$ is the variance of a single sample of $\tF$ and $n$ is the number of samples taken per iteration. Then, similarly as in Part (i), to bound the total number of samples it suffices to bound each term on the right-hand side of Eq.~\eqref{eq:p<2-stoch} by $\frac{\epsilon^2}{2}.$ The first term was already bounded in Theorem~\ref{thm:egpp_deterministic}, and thus we obtain:
$$
    k = O\Big(\frac{L^2\|\vu^* - \vu_0\|_p^2}{{m_p}^2\epsilon^2}\Big).
$$
For the second term, it suffices that:
$$
    n = O\Big(\frac{\sigma^2}{m_p \epsilon^2}\Big),
$$
and the bound on the total number of samples follows.

\noindent\textbf{Proof of Part (iii).} In this case, $q = p,$ $m_p = 1,$ $\rho = 0,$ $\phi_p(\vu^*, \vu) = \frac{1}{p}\|\vu^* - \vu\|_p^p$, and we take $\delta_k = \delta > 0,$ $\Lambda_k = \Lambda = \big(\frac{p-2}{p\delta}\big)^{\frac{p-2}{2}}L^{\frac{p}{2}}.$ Eq.~\eqref{eq:hk-egpp-stoch} now simplifies to:
\begin{equation}\label{eq:hk-egpp-stoch-p>1}
    \begin{aligned}
       0 \leq \ee[ h_k] \leq&\; \frac{2^{p^*/2}{a_k}^{p^*}({\sigma_k}^2 + {\bar{\sigma}_k}^2)^{p^*/2}}{p^*\tau^{p^*}}  \\
     &+ \ee\Big[\frac{1}{p}\|\vu^* - \vu_k\|_p^p - \frac{1}{p}\|\vu^* - \vu_{k+1}\|_p^p +  \frac{\beta - 1}{p}\|\vu_{k+1} - \vu_k\|_p^p\Big]\\
     &+ \ee\Big[\frac{a_k \Lambda \gamma - \beta}{p}\|\vub_k - \vu_k\|_{p}^p + \frac{a_k \Lambda/\gamma + \tau^p - \beta }{p}\|\vub_k - \vu_{k+1}\|_p^p + a_k \delta\Big].
    \end{aligned}
\end{equation}
Recall that, by Proposition~\ref{prop:step-to-grad}, $\frac{1}{p}\|\vu_{k+1} - \vu_k\|_p^p = \frac{{a}^{p^*}}{p}\|\tF(\vub_k)\|_{p^*}^{p^*}.$ Let $\beta = \frac{1}{2},$ $a_k = a = \frac{1}{4\Lambda},$ $\tau^p = \frac{1}{4},$ and $\gamma = 1.$ Then $\beta - 1 = - \frac{1}{2},$ $a_k\Lambda \gamma - \beta = - \frac{1}{4}<0,$ and $a_k \Lambda/\gamma + \tau^p - \beta = 0,$ and Eq.~\eqref{eq:hk-egpp-stoch-p>1} leads to:
\begin{equation}\notag
    \frac{a^{p^*}}{2p}\ee\big[\|\tF(\vub_k)\|_{p^*}^{p^*}\big] \leq \ee\Big[\frac{1}{p}\|\vu^* - \vu_k\|_p^p - \frac{1}{p}\|\vu^* - \vu_{k+1}\|_p^p\Big] + \frac{2^{\frac{4+p}{2(p-1)}}{a}^{p^*}({\sigma_k}^2 + {\bar{\sigma}_k}^2)^{p^*/2}}{p^*} + a \delta.
\end{equation}
Telescoping the last inequality and then dividing both sides by $\frac{a^{p^*}}{2p}(k+1),$ we have:
\begin{equation}\notag
     \frac{1}{k+1}\sum_{i=0}^k \ee\big[\|\tF(\vub_i)\|_{p^*}^{p^*}\big] \leq \frac{2\|\vu^* - \vu_0\|_p^p}{a^{p^*}(k+1)} + \frac{2^{\frac{3p+2}{2(p-1)}}p \sum_{i=0}^k({\sigma_i}^2 + {\bar{\sigma}_i}^2)^{p^*/2}}{p^*(k+1)} + \frac{2p\delta}{{a}^{p^*} - 1}.
\end{equation}
Now let $\sigma^2$ be the variance of a single sample of $\tF$ and suppose that in each iteration we take $n$ samples to estimate $F(\vub_i)$ and $F(\vu_i).$ Then ${\sigma_i}^2 = {\bar{\sigma_i}}^2 = \frac{\sigma^2}{n},$ and the last equation simplifies to
\begin{equation}\notag
     \frac{1}{k+1}\sum_{i=0}^k \ee\big[\|\tF(\vub_i)\|_{p^*}^{p^*}\big] \leq \frac{2\|\vu^* - \vu_0\|_p^p}{a^{p^*}(k+1)} + \frac{2^{\frac{p+2}{p-1}}p \sigma^{p^*}}{p^*n} + \frac{2p\delta}{{a}^{p^*} - 1}.
\end{equation}
To complete the proof, as before it suffices to show that we can choose $k$ and $n$ so that $\frac{2p\|\vu^* - \vu_0\|_p^p}{a^{p^*}(k+1)} + \frac{2p\delta}{{a}^{p^*} - 1} \leq \frac{\epsilon^{p^*}}{2}$ and $\frac{2^{\frac{p+2}{p-1}}p \sigma^{p^*}}{p^*n} \leq \frac{\epsilon^{p^*}}{2}$. For the former, following the same argument as in the proof of Theorem~\ref{thm:egpp_deterministic}, Part (ii), it suffices to choose $\delta = O_p(\frac{\epsilon^2}{L})$, which leads to:
$$
    k = O_p\bigg(\Big(\frac{L\|\vu^* - \vu_0\|_p}{\epsilon}\Big)^p\bigg).
$$
For the latter, it suffices to choose:
$$
    n = \frac{2^{\frac{p+2}{p-1} + 1}p \sigma^{p^*}}{p^*\epsilon^{p^*}} = O\bigg(\frac{p \sigma^{p^*}}{\epsilon^{p^*}}\bigg). 
$$
The total number of queries to the stochastic oracle is then bounded by $k(1+n).$
\end{proof}
\fi

\section{Discussion}

We introduced a new class of structured nonconvex-nonconcave min-max optimization problems and proposed a new generalization of the extragradient method that provably converges to a stationary point in Euclidean setups. Our algorithmic results guarantee that problems in this class contain at least one stationary point (an \eqref{eq:SVI} solution, see Remark~\ref{rem:SVI-existence}). The class we introduced generalizes other important classes of structured nonconvex-nonconcave problems, such as those in which an \eqref{eq:MVI} solution exists. We further generalized our results to stochastic setups and $\ell_p$-normed setups in which an \eqref{eq:MVI} solution exists. 
An interesting direction for future research is to understand to what extent we can further relax the assumptions about the structure of nonconvex-nonconcave problems, while maintaining computational feasibility of algorithms that can address them.

\section*{Acknowledgements}

We wish to thank Steve Wright for a useful discussion regarding convergence of sequences. We also wish to thank the Simons Institute for the Theory of Computing where some of this work was conducted.

JD was supported by the Office of the Vice
Chancellor for Research and Graduate Education at the University of Wisconsin–Madison with funding from the
Wisconsin Alumni Research Foundation and by the NSF Award  CCF-2007757. CD was supported by NSF Awards IIS-1741137, CCF-1617730, and CCF-1901292, by a Simons Investigator Award, by the Simons Collaboration on the Theory of Algorithmic Fairness, and by the DOE PhILMs project (No. DE-AC05-76RL01830). MJ was supported in part by the Mathematical Data Science program of the
Office of Naval Research under grant number N00014-18-1-2764.


\ifaistats
\newpage
\balance
\bibliographystyle{apalike}
\else
\bibliographystyle{plainnat}
\fi
\bibliography{references,costisrefs}

\begin{thebibliography}{51}
\providecommand{\natexlab}[1]{#1}
\providecommand{\url}[1]{\texttt{#1}}
\expandafter\ifx\csname urlstyle\endcsname\relax
  \providecommand{\doi}[1]{doi: #1}\else
  \providecommand{\doi}{doi: \begingroup \urlstyle{rm}\Url}\fi

\bibitem[Abernethy et~al.(2019)Abernethy, Lai, and Wibisono]{abernethy2019last}
Jacob Abernethy, Kevin~A Lai, and Andre Wibisono.
\newblock Last-iterate convergence rates for min-max optimization.
\newblock \emph{arXiv preprint arXiv:1906.02027}, 2019.

\bibitem[Adolphs et~al.(2019)Adolphs, Daneshmand, Lucchi, and
  Hofmann]{adolphs2018local}
Leonard Adolphs, Hadi Daneshmand, Aurelien Lucchi, and Thomas Hofmann.
\newblock Local saddle point optimization: A curvature exploitation approach.
\newblock In \emph{Proc.~AISTATS'19}, 2019.

\bibitem[Alkousa et~al.(2019)Alkousa, Dvinskikh, Stonyakin, and
  Gasnikov]{alkousa2019accelerated}
Mohammad Alkousa, Darina Dvinskikh, Fedor Stonyakin, and Alexander Gasnikov.
\newblock Accelerated methods for composite non-bilinear saddle point problem.
\newblock \emph{arXiv preprint arXiv:1906.03620}, 2019.

\bibitem[Azizian et~al.(2020)Azizian, Mitliagkas, Lacoste-Julien, and
  Gidel]{AzizianMLG20}
Waïss Azizian, Ioannis Mitliagkas, Simon Lacoste-Julien, and Gauthier Gidel.
\newblock A tight and unified analysis of extragradient for a whole spectrum of
  differentiable games.
\newblock In \emph{Proc.~AISTATS'20}, 2020.

\bibitem[Bauschke et~al.(2020)Bauschke, Moursi, and
  Wang]{bauschke2020generalized}
Heinz~H Bauschke, Walaa~M Moursi, and Xianfu Wang.
\newblock Generalized monotone operators and their averaged resolvents.
\newblock \emph{Mathematical Programming}, pages 1--20, 2020.

\bibitem[Bertsekas(1971)]{bertsekas1971control}
Dimitri~P Bertsekas.
\newblock \emph{Control of uncertain systems with a set-membership description
  of the uncertainty.}
\newblock PhD thesis, MIT, 1971.

\bibitem[Bertsekas et~al.(2003)Bertsekas, Nedic, and Ozdaglar]{Bertsekas2003}
Dimitri~P Bertsekas, Angelia Nedic, and Asuman~E Ozdaglar.
\newblock \emph{{Convex Analysis and Optimization}}.
\newblock Athena Scientific, 2003.

\bibitem[Borwein et~al.(2009)Borwein, Guirao, H{\'a}jek, and
  Vanderwerff]{borwein2009uniformly}
J~Borwein, A~Guirao, Petr H{\'a}jek, and J~Vanderwerff.
\newblock Uniformly convex functions on {B}anach spaces.
\newblock \emph{Proceedings of the American Mathematical Society}, 137\penalty0
  (3):\penalty0 1081--1091, 2009.

\bibitem[Borwein and Zhu(2004)]{borwein2004techniques}
Jonathan~M Borwein and Qiji~J Zhu.
\newblock \emph{Techniques of Variational Analysis}.
\newblock Springer, 2004.

\bibitem[Boyd et~al.(2004)Boyd, Boyd, and Vandenberghe]{boyd2004convex}
Stephen Boyd, Stephen~P Boyd, and Lieven Vandenberghe.
\newblock \emph{Convex optimization}.
\newblock Cambridge university press, 2004.

\bibitem[Combettes and Pennanen(2004)]{combettes2004proximal}
Patrick~L Combettes and Teemu Pennanen.
\newblock Proximal methods for cohypomonotone operators.
\newblock \emph{SIAM journal on control and optimization}, 43\penalty0
  (2):\penalty0 731--742, 2004.

\bibitem[Dang and Lan(2015)]{dang2015convergence}
Cong~D Dang and Guanghui Lan.
\newblock On the convergence properties of non-euclidean extragradient methods
  for variational inequalities with generalized monotone operators.
\newblock \emph{Computational Optimization and applications}, 60\penalty0
  (2):\penalty0 277--310, 2015.

\bibitem[Daskalakis and Panageas(2018)]{daskalakis2018limit}
Constantinos Daskalakis and Ioannis Panageas.
\newblock The limit points of (optimistic) gradient descent in min-max
  optimization.
\newblock In \emph{Proc.~NeurIPS'18}, pages 9236--9246, 2018.

\bibitem[Daskalakis and Panageas(2019)]{daskalakis2019last}
Constantinos Daskalakis and Ioannis Panageas.
\newblock Last-iterate convergence: Zero-sum games and constrained min-max
  optimization.
\newblock \emph{Proc.~ITCS'19}, 2019.

\bibitem[Daskalakis et~al.(2018)Daskalakis, Ilyas, Syrgkanis, and
  Zeng]{daskalakis2017training}
Constantinos Daskalakis, Andrew Ilyas, Vasilis Syrgkanis, and Haoyang Zeng.
\newblock Training {GAN}s with optimism.
\newblock In \emph{Proc.~ICLR'18}, 2018.

\bibitem[Daskalakis et~al.(2020)Daskalakis, Foster, and
  Golowich]{daskalakis2021independent}
Constantinos Daskalakis, Dylan~J Foster, and Noah Golowich.
\newblock Independent policy gradient methods for competitive reinforcement
  learning.
\newblock In \emph{Proc.~NeurIPS'20}, 2020.

\bibitem[Daskalakis et~al.(2021)Daskalakis, Skoulakis, and
  Zampetakis]{daskalakis2020complexity}
Constantinos Daskalakis, Stratis Skoulakis, and Manolis Zampetakis.
\newblock The complexity of constrained min-max optimization.
\newblock In \emph{Proc.~STOC'21}, 2021.

\bibitem[d'Aspremont et~al.(2018)d'Aspremont, Guzman, and Jaggi]{d2018optimal}
Alexandre d'Aspremont, Cristobal Guzman, and Martin Jaggi.
\newblock Optimal affine-invariant smooth minimization algorithms.
\newblock \emph{SIAM Journal on Optimization}, 28\penalty0 (3):\penalty0
  2384--2405, 2018.

\bibitem[Devolder et~al.(2014)Devolder, Glineur, and
  Nesterov]{devolder2014first}
Olivier Devolder, Fran{\c{c}}ois Glineur, and Yurii Nesterov.
\newblock First-order methods of smooth convex optimization with inexact
  oracle.
\newblock \emph{Mathematical Programming}, 146\penalty0 (1-2):\penalty0 37--75,
  2014.

\bibitem[Diakonikolas(2020)]{diakonikolas2020halpern}
Jelena Diakonikolas.
\newblock Halpern iteration for near-optimal and parameter-free monotone
  inclusion and strong solutions to variational inequalities.
\newblock In \emph{Proc.~COLT'20}, 2020.

\bibitem[Gidel et~al.(2019)Gidel, Hemmat, Pezeshki, Le~Priol, Huang,
  Lacoste-Julien, and Mitliagkas]{gidel2019negative}
Gauthier Gidel, Reyhane~Askari Hemmat, Mohammad Pezeshki, R{\'e}mi Le~Priol,
  Gabriel Huang, Simon Lacoste-Julien, and Ioannis Mitliagkas.
\newblock Negative momentum for improved game dynamics.
\newblock In \emph{Proc.~AISTATS'19}, 2019.

\bibitem[Golowich et~al.(2020)Golowich, Pattathil, Daskalakis, and
  Ozdaglar]{GolowichPDO}
Noah Golowich, Sarath Pattathil, Constantinos Daskalakis, and Asuman~E.
  Ozdaglar.
\newblock Last iterate is slower than averaged iterate in smooth convex-concave
  saddle point problems.
\newblock In \emph{Proc.~COLT'2020}, 2020.

\bibitem[Hamedani and Aybat(2018)]{hamedani2018primal}
Erfan~Yazdandoost Hamedani and Necdet~Serhat Aybat.
\newblock A primal-dual algorithm for general convex-concave saddle point
  problems.
\newblock \emph{arXiv preprint arXiv:1803.01401}, 2018.

\bibitem[Hirsch et~al.(1989)Hirsch, Papadimitriou, and
  Vavasis]{hirsch1989exponential}
Michael~D Hirsch, Christos~H Papadimitriou, and Stephen~A Vavasis.
\newblock Exponential lower bounds for finding brouwer fix points.
\newblock \emph{Journal of Complexity}, 5\penalty0 (4):\penalty0 379--416,
  1989.

\bibitem[Jin et~al.(2020)Jin, Netrapalli, and Jordan]{jin2019minmax}
Chi Jin, Praneeth Netrapalli, and Michael~I Jordan.
\newblock What is local optimality in nonconvex-nonconcave minimax
  optimization?
\newblock In \emph{Proc.~ICML'20}, 2020.

\bibitem[Kinderlehrer and Stampacchia(2000)]{kinderlehrer2000introduction}
David Kinderlehrer and Guido Stampacchia.
\newblock \emph{An introduction to variational inequalities and their
  applications}.
\newblock SIAM, 2000.

\bibitem[Kong and Monteiro(2019)]{kong2019accelerated}
Weiwei Kong and Renato~DC Monteiro.
\newblock An accelerated inexact proximal point method for solving
  nonconvex-concave min-max problems.
\newblock \emph{arXiv preprint arXiv:1905.13433}, 2019.

\bibitem[Korpelevich(1976)]{korpelevich1976extragradient}
GM~Korpelevich.
\newblock The extragradient method for finding saddle points and other
  problems.
\newblock \emph{Matecon}, 12:\penalty0 747--756, 1976.

\bibitem[Liang and Stokes(2019)]{liang2019interaction}
Tengyuan Liang and James Stokes.
\newblock Interaction matters: A note on non-asymptotic local convergence of
  generative adversarial networks.
\newblock In \emph{Proc.~AISTATS'19}, 2019.

\bibitem[Lin et~al.(2018)Lin, Liu, Rafique, and Yang]{lin2018solving}
Qihang Lin, Mingrui Liu, Hassan Rafique, and Tianbao Yang.
\newblock Solving weakly-convex-weakly-concave saddle-point problems as
  successive strongly monotone variational inequalities.
\newblock \emph{arXiv preprint arXiv:1810.10207}, 2018.

\bibitem[Lin et~al.(2020{\natexlab{a}})Lin, Jin, and Jordan]{lin2020near}
Tianyi Lin, Chi Jin, and Michael Jordan.
\newblock Near-optimal algorithms for minimax optimization.
\newblock In \emph{Proc.~COLT'20}, 2020{\natexlab{a}}.

\bibitem[Lin et~al.(2020{\natexlab{b}})Lin, Jin, and Jordan]{lin2019gradient}
Tianyi Lin, Chi Jin, and Michael~I Jordan.
\newblock On gradient descent ascent for nonconvex-concave minimax problems.
\newblock In \emph{Proc.~ICML'20}, 2020{\natexlab{b}}.

\bibitem[Liu et~al.(2020)Liu, Mroueh, Ross, Zhang, Cui, Das, and
  Yang]{liu2019towards}
Mingrui Liu, Youssef Mroueh, Jerret Ross, Wei Zhang, Xiaodong Cui, Payel Das,
  and Tianbao Yang.
\newblock Towards better understanding of adaptive gradient algorithms in
  generative adversarial nets.
\newblock In \emph{Proc.~ICLR'20}, 2020.

\bibitem[Lu et~al.(2020)Lu, Tsaknakis, Hong, and Chen]{lu2019hybrid}
Songtao Lu, Ioannis Tsaknakis, Mingyi Hong, and Yongxin Chen.
\newblock Hybrid block successive approximation for one-sided non-convex
  min-max problems: Algorithms and applications.
\newblock \emph{IEEE Transactions on Signal Processing}, 68:\penalty0
  3676--3691, 2020.

\bibitem[Malitsky(2019)]{Malitsky2019}
Yura Malitsky.
\newblock Golden ratio algorithms for variational inequalities.
\newblock \emph{Mathematical Programming}, Jul 2019.

\bibitem[Mangoubi and Vishnoi(2021)]{MangoubiV20}
Oren Mangoubi and Nisheeth~K Vishnoi.
\newblock Greedy adversarial equilibrium: An efficient alternative to
  nonconvex-nonconcave min-max optimization.
\newblock In \emph{Proc.~ICLR'21}, 2021.

\bibitem[Mangoubi et~al.(2020)Mangoubi, Sachdeva, and Vishnoi]{MangoubiSV20}
Oren Mangoubi, Sushant Sachdeva, and Nisheeth~K Vishnoi.
\newblock A provably convergent and practical algorithm for min-max
  optimization with applications to {GANs}.
\newblock \emph{arXiv preprint arXiv:2006.12376}, 2020.

\bibitem[Mazumdar et~al.(2020)Mazumdar, Ratliff, and
  Sastry]{mazumdar2018convergence}
Eric Mazumdar, Lillian~J Ratliff, and S~Shankar Sastry.
\newblock On gradient-based learning in continuous games.
\newblock \emph{SIAM Journal on Mathematics of Data Science}, 2\penalty0
  (1):\penalty0 103--131, 2020.

\bibitem[Mertikopoulos et~al.(2018)Mertikopoulos, Papadimitriou, and
  Piliouras]{MertikopoulosPP18}
Panayotis Mertikopoulos, Christos~H. Papadimitriou, and Georgios Piliouras.
\newblock Cycles in adversarial regularized learning.
\newblock In \emph{Proc.~{ACM-SIAM} SODA'18}, 2018.

\bibitem[Mertikopoulos et~al.(2019)Mertikopoulos, Lecouat, Zenati, Foo,
  Chandrasekhar, and Piliouras]{mertikopoulos2018optimistic}
Panayotis Mertikopoulos, Bruno Lecouat, Houssam Zenati, Chuan-Sheng Foo, Vijay
  Chandrasekhar, and Georgios Piliouras.
\newblock Optimistic mirror descent in saddle-point problems: Going the extra
  (gradient) mile.
\newblock In \emph{Proc.~ICLR'19}, 2019.

\bibitem[Mokhtari et~al.(2020)Mokhtari, Ozdaglar, and
  Pattathil]{mokhtari2019unified}
Aryan Mokhtari, Asuman Ozdaglar, and Sarath Pattathil.
\newblock A unified analysis of extra-gradient and optimistic gradient methods
  for saddle point problems: Proximal point approach.
\newblock In \emph{Proc.~AISTATS'20}, 2020.

\bibitem[Nemirovski(2004)]{nemirovski2004regular}
Arkadi Nemirovski.
\newblock Regular {B}anach spaces and large deviations of random sums.
\newblock \emph{Unpublished, E-print:
  \url{https://www2.isye.gatech.edu/~nemirovs/LargeDev2004.pdf}}, 2004.

\bibitem[Nesterov(2015)]{nesterov2015universal}
Yu~Nesterov.
\newblock Universal gradient methods for convex optimization problems.
\newblock \emph{Mathematical Programming}, 152\penalty0 (1-2):\penalty0
  381--404, 2015.

\bibitem[Nouiehed et~al.(2019)Nouiehed, Sanjabi, Huang, Lee, and
  Razaviyayn]{nouiehed2019solving}
Maher Nouiehed, Maziar Sanjabi, Tianjian Huang, Jason~D Lee, and Meisam
  Razaviyayn.
\newblock Solving a class of non-convex min-max games using iterative first
  order methods.
\newblock In \emph{Proc.~NeurIPS'19}, 2019.

\bibitem[Papadimitriou(1994)]{papadimitriou1994complexity}
Christos~H Papadimitriou.
\newblock On the complexity of the parity argument and other inefficient proofs
  of existence.
\newblock \emph{Journal of Computer and system Sciences}, 48\penalty0
  (3):\penalty0 498--532, 1994.

\bibitem[Song et~al.(2020)Song, Zhou, Zhou, Jiang, and Ma]{song2020optimistic}
Chaobing Song, Yichao Zhou, Zhengyuan Zhou, Yong Jiang, and Yi~Ma.
\newblock Optimistic dual extrapolation for coherent non-monotone variational
  inequalities.
\newblock In \emph{Proc.~NeurIPS'20}, 2020.

\bibitem[Thekumparampil et~al.(2019)Thekumparampil, Jain, Netrapalli, and
  Oh]{thekumparampil2019efficient}
Kiran~K Thekumparampil, Prateek Jain, Praneeth Netrapalli, and Sewoong Oh.
\newblock Efficient algorithms for smooth minimax optimization.
\newblock In \emph{Proc.~NeurIPS'19}, 2019.

\bibitem[Wang et~al.(2019)Wang, Zhang, and Ba]{wang2019solving}
Yuanhao Wang, Guodong Zhang, and Jimmy Ba.
\newblock On solving minimax optimization locally: A follow-the-ridge approach.
\newblock In \emph{Proc.~ICLR'19}, 2019.

\bibitem[Yang et~al.(2020)Yang, Kiyavash, and He]{yang2020global}
Junchi Yang, Negar Kiyavash, and Niao He.
\newblock Global convergence and variance-reduced optimization for a class of
  nonconvex-nonconcave minimax problems.
\newblock In \emph{Proc.~NeurIPS'20}, 2020.

\bibitem[Zhao(2019)]{zhao2019optimal}
Renbo Zhao.
\newblock Optimal algorithms for stochastic three-composite convex-concave
  saddle point problems.
\newblock \emph{arXiv preprint arXiv:1903.01687}, 2019.

\bibitem[Zhou et~al.(2017)Zhou, Mertikopoulos, Bambos, Boyd, and
  Glynn]{zhou2017stochastic}
Zhengyuan Zhou, Panayotis Mertikopoulos, Nicholas Bambos, Stephen Boyd, and
  Peter~W Glynn.
\newblock Stochastic mirror descent in variationally coherent optimization
  problems.
\newblock In \emph{Proc.~NIPS'17}, 2017.

\end{thebibliography}

\ifaistats
\newpage
\onecolumn
\appendix
\section{Additional Background}\label{appx:background}
\begin{definition}[Convex conjugate]\label{def:cvx-conj}
Given a convex function $\psi: \rr^d \to \rr \cup \{+\infty\},$ its convex conjugate $\psi^*$ is defined by:
$$
    (\forall \vz \in \rr^d):\quad \psi^*(\vz) = \sup_{\vx \in \rr^d}\{\innp{\vz, \vx} - \psi(\vx)\}.
$$
\end{definition}

The following standard fact can be derived using Fenchel-Young inequality $\forall \vx, \vz \in \rr^d: \psi(\vx) + \psi^*(\vz) \geq \innp{\vz, \vx},$ and it is a simple corollary of Danskin's theorem (see, e.g.,~\citet{bertsekas1971control,Bertsekas2003}).

\begin{fact}\label{fact:danskin}
Let $\psi: \rr^d \to \rr \cup \{+\infty\}$ be a closed convex proper function and let $\psi^*$ be its convex conjugate. Then,  $\forall \vg \in \partial \psi^*(\vz),$
$$
     \vg \in \argsup_{\vx \in \rr^d}\{\innp{\vz, \vx} - \psi(\vx)\},
$$
where $\partial \psi^*(\vz)$ is the subdifferential set (the set of all subgradients) of $\psi^*$ at point $\vz$. In particular, if $\psi^*$ is differentiable, then $\argsup_{\vx \in \rr^d}\{\innp{\vz, \vx} - \psi(\vx)\}$ is a singleton set and $\nabla \psi^*(\vz)$ is its only element.
\end{fact}

\propsteptograd*
\begin{proof}
The statements in the proposition are simple corollaries of conjugacy of the functions $\psi(\vu) = \frac{1}{q}\|\vu\|_p^q$ and $\psi^*(\vz) = \frac{1}{q^{*}}\|\vz\|_{p^*}^{q^*}$. In particular, the first part follows from
$$
    \psi^*(\vz) = \sup_{\vv\in \rr^d}\{\innp{\vz, \vv} - \psi(\vv)\},
$$
by the definition of a convex conjugate and using that $ \frac{1}{q}\|\vu\|_p^q$ and $ \frac{1}{q^{*}}\|\vz\|_{p^*}^{q^*}$ are conjugates of each other, which are standard exercises in convex analysis for $q \in \{p, 2\}$ (see, e.g.,~\citep[Exercise 4.4.2]{borwein2004techniques} and \citep[Example 3.27]{boyd2004convex}). 

The second part follows by $\nabla \psi^*(\vz) = \arg\sup_{\vv\in \rr^d}\{\innp{\vz, \vv} - \psi(\vv)\},$ due to Fact~\ref{fact:danskin} ($\psi$ and $\psi^*$ are both continuously differentiable for $p \in (1, \infty)$). Lastly, $ \frac{1}{q}\|\vw - \vu\|_p^q = \frac{1}{q}\|\vz\|_{p^*}^{q^*}$ can be verified by setting $\vw = \vu - \nabla \big(\frac{1}{q^*}\|\vz\|_{p^*}^{q^*}\big).$
\end{proof}

\propsmoothub*
\begin{proof}
The proof is based on the Fenchel-Young inequality and the conjugacy of functions $\frac{|x|^r}{r}$ and $\frac{|y|^s}{s}$ for $r, s \geq 1$, $\frac{1}{r} + \frac{1}{s} = 1,$ which implies $xy \leq \frac{x^r}{r} + \frac{y^s}{s},$ $\forall x, y \geq 0.$ In particular, setting $r = q/\kappa,$ $s = q/(q-\kappa)$, and $x = t^{\kappa},$ we have
$$
    \frac{L}{\kappa}t^{\kappa} \leq \frac{L t^q}{q y} + \frac{L(q - \kappa)}{q\kappa} y^{\frac{\kappa}{q-\kappa}}.
$$
It remains to set $\frac{\delta}{2} = \frac{L(q - \kappa)}{q\kappa} y^{\frac{\kappa}{q-\kappa}},$ which, solving for $y,$ gives $y = \big(\frac{\delta q \kappa}{2 L (q - \kappa)}\big)^{q - \kappa}$, and verify that, under this choice, $\Lambda = \frac{L t^q}{q y}.$
\end{proof}
\section{Omitted Proofs from Section~\ref{sec:mod-eg}}\label{appx:sec-egp}

\egpBndhk*
\begin{proof}
Fix any $k \geq 0$ and write $h_k$ equivalently as
\begin{equation}\label{eq:mod-eg-change-in-gap}
\begin{aligned}
    h_k =& \;  a_k\innp{F(\vub_k), \vu_{k+1} - \vu^*} + a_k\innp{F(\vu_k), \vub_k - \vu_{k+1}} \\
    &+ a_k \innp{F(\vub_k) - F(\vu_k), \vub_k - \vu_{k+1}}
    + a_k \frac{\rho}{2}\|F(\vub_k)\|^2.
\end{aligned}
\end{equation}
The proof proceeds by bounding above individual terms on the right-hand side of Eq.~\eqref{eq:mod-eg-change-in-gap}. For the first term, the first-order optimality in the definition of $\vu_{k+1}$ gives:
$$
    a_k F(\vub_k) + \vu_{k+1} - \vu_k = \zeros.
$$
Thus, we have
\begin{equation}\label{eq:mod-eg-ineq-1}
\begin{aligned}
    a_k\innp{F(\vub_k), \vu_{k+1} - \vu^*} &= -\innp{\vu_{k+1} - \vu_k, \vu_{k+1} - \vu^*}\\
    &= \frac{1}{2}\|\vu^* - \vu_k\|^2 - \frac{1}{2}\|\vu^* - \vu_{k+1}\|^2 - \frac{1}{2}\|\vu_k - \vu_{k+1}\|^2.
\end{aligned}
\end{equation}

For the second term on the right-hand side of Eq.~\eqref{eq:mod-eg-change-in-gap}, the first-order optimality in the definition of $\vub_k$ implies:
\begin{equation*}
    \frac{a_k}{\beta}\innp{F(\vu_k) + \vub_{k} - \vu_k, \vu_{k+1} - \vub_k} = 0,
\end{equation*}
which, similarly as for the first term, leads to:
\begin{equation}\label{eq:mod-eg-ineq-2}
    a_k\innp{F(\vu_k), \vub_k - \vu_{k+1}} = \frac{\beta}{2}\|\vu_k - \vu_{k+1}\|^2 - \frac{\beta}{2}\|\vu_k - \vub_k\|^2 - \frac{\beta}{2}\|\vu_{k+1} - \vub_k\|^2.
\end{equation}

For the third term on the right-hand side of Eq.~\eqref{eq:mod-eg-change-in-gap}, applying Cauchy-Schwarz inequality, $L$-Lipschitzness of $F,$ and Young's inequality, respectively, we have:
\begin{align}
    a_k \innp{F(\vub_k) - F(\vu_k), \vub_k - \vu_{k+1}} & \leq a_k \|F(\vub_k) - F(\vu_k)\|\|\vub_k - \vu_{k+1}\|\notag\\
    &\leq a_k L \|\vub_k - \vu_k\|\|\vub_k - \vu_{k+1}\|\notag\\
    &\leq \frac{a_k L \gamma}{2}\|\vub_k - \vu_k\|^2 + \frac{a_k L}{2\gamma}\|\vub_k - \vu_{k+1}\|^2, \label{eq:mod-eg-ineq-3}
\end{align}
where the last inequality holds for any $\gamma > 0.$

Using that $\vub_k - \vu_k = -\frac{a_k}{\beta}F(\vu_k)$, $\vu_{k+1} - \vu_k = -a_k F(\vub_k)$ and combining Eqs.~\eqref{eq:mod-eg-ineq-1}-\eqref{eq:mod-eg-ineq-3} with Eq.~\eqref{eq:mod-eg-change-in-gap}, we have:
\begin{align*}
    h_k \leq &\; \frac{1}{2}\|\vu^* - \vu_k\|^2 - \frac{1}{2}\|\vu^* - \vu_{k+1}\|^2 
    + \frac{a_k}{2}\big( \rho - a_k(1-\beta)\big)\|F(\vub_k)\|^2\\
    &+ \frac{{a_k}^2}{2\beta^2}\big(a_kL\gamma - \beta\big)\|F(\vu_k)\|^2
    + \frac{1}{2}\Big(\frac{a_k L}{\gamma} - \beta\Big)\|\vub_k - \vu_{k+1}\|^2,
\end{align*}
as claimed.
\end{proof}

\section{Omitted Proofs from Section~\ref{sec:extensions}}\label{appx:sec-egpp}

We start by first proving the following lemma that holds for generic choices of algorithm parameters $a_k$ and $\beta.$ We will then use this lemma to deduce the convergence bounds for different choices of $p > 1$ and both the deterministic and the stochastic oracle access to $F.$

\begin{lemma}\label{lemma:bound-on-hk-egpp}
Let $p > 1$ and let $F:\rr^d\to\rr^d$ be an arbitrary $L$-Lipschitz operator w.r.t.~$\|\cdot\|_p$ that satisfies Assumption~\ref{assmpt:cohypo} for some $\vu^* \in \cu^*$. Given an arbitrary initial point $\vu_0,$ let the sequences of points $\{\vu_i\}_{i\geq 1}$, $\{\vub_i\}_{i\geq 0}$ evolve according to \eqref{eq:egp+} for some $\beta \in (0, 1]$ and positive step sizes $\{a_i\}_{i\geq 0}.$ Then, for any $\gamma > 0$ and any $k \geq 0$:
\begin{equation}\notag
    \begin{aligned}
        h_k \leq&\; -a_k \innp{\vetab_k, \vub_k - \vu^*} - a_k\innp{\vetab_k - \veta_k, \vub_k - \vu_{k+1}} + \frac{a_k \rho}{2}\|F(\vub_k)\|_{p^*}^2 \\
     &+ \phi_p(\vu^*, \vu_k) - \phi_p(\vu^*, \vu_{k+1}) +  \frac{\beta - m_p}{q}\|\vu_{k+1} - \vu_k\|_p^q\\
     &+ \frac{a_k \Lambda_k \gamma - \beta}{q}\|\vub_k - \vu_k\|_{p}^q + \frac{a_k \Lambda_k/\gamma - \beta m_p}{q}\|\vub_k - \vu_{k+1}\|_p^q + a_k \delta_k,
    \end{aligned}
\end{equation}
where $h_k$ is defined as in Eq.~\eqref{eq:h-k-egpp}, $\delta_k$ is any positive number, and $\Lambda_k = \Big(\frac{q-2}{\delta_k q }\Big)^{\frac{q-2}{2}}L^{q/2}$. When $q = 2,$ the statement also  holds with $\delta_k = 0$ and $\Lambda_k = L.$
\end{lemma}
\begin{proof}
We begin the proof by writing $h_k$ equivalently as:
\begin{equation}\label{eq:hk-equiv-egpp}
    \begin{aligned}
        h_k =&\; a_k \innp{\tF(\vub_k), \vub_k - \vu^*} -a_k \innp{\vetab_k, \vub_k - \vu^*} + \frac{a_k \rho}{2}\|F(\vub_k)\|_{p^*}^2\\
        =&\; a_k \innp{\tF(\vub_k), \vu_{k+1} - \vu^*} + a_k \innp{\tF(\vu_k), \vub_k - \vu_{k+1}} \\
        &+ a_k \innp{\tF(\vub_k) - \tF(\vu_k), \vub_k - \vu_{k+1}} -a_k \innp{\vetab_k, \vub_k - \vu^*} + \frac{a_k \rho}{2}\|F(\vub_k)\|_{p^*}^2.
    \end{aligned}
\end{equation}
The proof now proceeds by bounding individual terms on the right-hand side of the last equality.

Let $M_{k+1}(\vu) = a_k \innp{\nabla \tF(\vub_k), \vu - \vu_k} + \phi_p(\vu, \vu_k),$ so that $\vu_{k+1} = \argmin_{\vu \in \rr^d}M_{k+1}(\vu)$. By the definition of Bregman divergence of $M_{k+1}:$
$$
    M_{k+1}(\vu^*) = M_{k+1}(\vu_{k+1}) + \innp{\nabla M_{k+1}(\vu_{k+1}), \vu^* - \vu_{k+1}} + D_{M_{k+1}}(\vu^*, \vu_{k+1}).
$$
As $\vu_{k+1} = \argmin_{\vu \in \rr^d}M_{k+1}(\vu)$, we have $\nabla M_{k+1}(\vu_{k+1}) = \zeros.$ Further, $D_{M_{k+1}}(\vu^*, \vu_{k+1}) = D_{\phi_p(\cdot, \vu_k)}(\vu^*, \vu_{k+1}).$ When $p \leq 2,$ $\phi_p$ itself is a Bregman divergence, and we have  $D_{M_{k+1}}(\vu^*, \vu_{k+1}) = \phi_p(\vu^*, \vu_{k+1}).$ When $p > 2,$ $\phi_p(\vu, \vu_k) = \frac{1}{p}\|\vu - \vu_k\|_p^p$, and as $\phi_p$ is $p$-uniformly convex with constant 1, it follows that $D_{M_{k+1}}(\vu^*, \vu_{k+1}) \geq \frac{1}{p}\|\vu^* - \vu_{k+1}\|_p^p = \phi_p(\vu^*, \vu_{k+1}).$ Thus:
$$
    M_{k+1}(\vu^*) \geq M_{k+1}(\vu_{k+1}) + \phi_p(\vu^*, \vu_{k+1}). 
$$
Equivalently, applying the definition of $M_{k+1}(\cdot)$ to the last inequality:
\begin{equation}\label{eq:egpp-1}
\begin{aligned}
    a_k \innp{\nabla \tF(\vub_k), \vu_{k+1} - \vu^*} &\leq \phi_p(\vu^*, \vu_k) - \phi_p(\vu^*, \vu_{k+1}) - \phi_p(\vu_{k+1, \vu_k})\\
    &\leq \phi_p(\vu^*, \vu_k) - \phi_p(\vu^*, \vu_{k+1}) - \frac{m_p}{q}\|\vu_{k+1} - \vu_k\|_p^q,
\end{aligned}
\end{equation}
where the last inequality follows from Eq.~\eqref{eq:phi-p-unif-cvx}.

Now let $\bar{M}_k(\vu) = \frac{a_k}{\beta} \innp{\tF(\vu_k), \vu - \vu_k} +  \frac{1}{q}\|\vu - \vu_k\|_p^q$ so that $\vub_k = \argmin_{\vu \in \rr^d}\bar{M}_k(\vu).$ By similar arguments as above,
\begin{align*}
    \bar{M}_k(\vu_{k+1}) &= \bar{M}_k(\vub_k) + \innp{\nabla \bar{M}_k(\vub_k), \vu_{k+1} - \vub_k} + D_{M_k}(\vu_{k+1}, \vub_k)\\
    &\geq \bar{M}_k(\vub_k) + \frac{m_p}{q}\|\vu_{k+1} - \vub_k\|_p^q,
\end{align*}
where the inequality is by $\nabla \bar{M}_k(\vub_k) = \zeros$ and the fact that $\frac{1}{q}\|\cdot\|_p^q$ is $q$-uniformly convex w.r.t.~$\|\cdot\|_p$ with constant $m_p$, by the choice of $q$ from Eq.~\eqref{eq:q-def}. Applying the definition of $\bar{M}_k(\vu)$ to the last inequality:
\begin{equation}\label{eq:egpp-2}
    a_k\innp{\tF(\vu_k), \vub_k - \vu_{k+1}} \leq \frac{\beta}{q}\big( \|\vu_{k+1} - \vu_k\|_p^q - \|\vub_k - \vu_k\|_p^q - m_p\|\vu_{k+1} - \vub_k\|_p^q\big).
\end{equation}

The remaining term to bound is $\innp{\tF(\vub_k) - \tF(\vu_k), \vub_k - \vu_{k+1}}.$ Using the definitions of $\vetab_k, \veta_k,$ we have:
\begin{align*}
    \innp{\tF(\vub_k) - \tF(\vu_k), \vub_k - \vu_{k+1}} &= \innp{F(\vub_k) - F(\vu_k), \vub_k - \vu_{k+1}} - \innp{\vetab_k - \veta_k, \vub_k - \vu_{k+1}}\\
    &\stackrel{(i)}{\leq} - \innp{\vetab_k - \veta_k, \vub_k - \vu_{k+1}} + \|F(\vub_k) - F(\vu_k)\|_{p^*}\|\vub_k - \vu_{k+1}\|_p\\
    &\stackrel{(ii)}{\leq} - \innp{\vetab_k - \veta_k, \vub_k - \vu_{k+1}} + L\|\vub_k - \vu_k\|_{p}\|\vub_k - \vu_{k+1}\|_p\\
    &\stackrel{(iii)}{\leq} - \innp{\vetab_k - \veta_k, \vub_k - \vu_{k+1}} + \frac{L \gamma}{2}\|\vub_k - \vu_k\|_{p}^2 + \frac{L}{2\gamma}\|\vub_k - \vu_{k+1}\|_p^2,
\end{align*}
where $(i)$ is by H\"{o}lder's inequality, $(ii)$ is by $L$-Lipschitzness of $F,$ and $(iii)$ is by Young's inequality, which holds for any $\gamma > 0.$ Now, let $\delta_k > 0$ and $\Lambda_k = \Big(\frac{2(q-\kappa)}{\delta_k q \kappa}\Big)^{\frac{q-\kappa}{\kappa}}L^{q/\kappa}.$ Then, applying Proposition~\ref{prop:smooth-ub} to the last two terms in the last inequality:
\begin{equation}\label{eq:egpp-3}
    \begin{aligned}
        \innp{\tF(\vub_k) - \tF(\vu_k), \vub_k - \vu_{k+1}} \leq &\; - \innp{\vetab_k - \veta_k, \vub_k - \vu_{k+1}}\\
        &+ \frac{\Lambda_k \gamma}{q}\|\vub_k - \vu_k\|_{p}^q + \frac{\Lambda_k}{q\gamma}\|\vub_k - \vu_{k+1}\|_p^q + \delta_k.
    \end{aligned}
\end{equation}
Observe that when $q = 2,$ there is no need to apply Proposition~\ref{prop:smooth-ub}, and the last inequality is satisfied with $\delta_k = 0$ and $\Lambda_k = L.$

Combining Eqs.~\eqref{eq:egpp-1}-\eqref{eq:egpp-3} with Eq.~\eqref{eq:hk-equiv-egpp}, we have:
\begin{align*}
     h_k \leq&\; -a_k \innp{\vetab_k, \vub_k - \vu^*} - a_k\innp{\vetab_k - \veta_k, \vub_k - \vu_{k+1}} + \frac{a_k \rho}{2}\|F(\vub_k)\|_{p^*}^2 \\
     &+ \phi_p(\vu^*, \vu_k) - \phi_p(\vu^*, \vu_{k+1}) +  \frac{\beta - m_p}{q}\|\vu_{k+1} - \vu_k\|_p^q\\
     &+ \frac{a_k \Lambda_k \gamma - \beta}{q}\|\vub_k - \vu_k\|_{p}^q + \frac{a_k \Lambda_k/\gamma - \beta m_p}{q}\|\vub_k - \vu_{k+1}\|_p^q + a_k \delta_k,
\end{align*}
as claimed.
\end{proof}

We are now ready to state and prove the main convergence bounds. For simplicity, we first start with the case of exact oracle access to $F$. We then show that we can build on this result by separately bounding the error terms due to the variance of the stochastic estimates $\tF.$

\paragraph{Deterministic Oracle Access.} The main result is summarized in the following theorem.
\thmegppdet*
\begin{proof}
Observe that, as $\vetab_i = \veta_i = \zeros,$ $\forall i \geq 0$ and $\rho = 0,$ Lemma~\ref{lemma:bound-on-hk-egpp} and the definition of $h_k$ give:
\begin{equation}\label{eq:egpp-det-h_k}
    \begin{aligned}
        0\leq h_k \leq&\; \phi_p(\vu^*, \vu_k) - \phi_p(\vu^*, \vu_{k+1}) +  \frac{\beta - m_p}{q}\|\vu_{k+1} - \vu_k\|_p^q \\
     &+ \frac{a_k \Lambda_k \gamma - \beta}{q}\|\vub_k - \vu_k\|_{p}^q + \frac{a_k \Lambda_k/\gamma - \beta m_p}{q}\|\vub_k - \vu_{k+1}\|_p^q + a_k \delta_k,
    \end{aligned}
\end{equation}

\noindent\textbf{Proof of Part (i).} In this case, we can set $\delta_k = 0$ (see Lemma~\ref{lemma:bound-on-hk-egpp}), $\Lambda_k = L,$  and $q = 2.$ Therefore, setting $\beta = m_p,$ $a_k = \frac{{m_p}^{3/2}}{2 L}$, and $\gamma = \frac{1}{\sqrt{m_p}}$ we get from Eq.~\eqref{eq:egpp-det-h_k} that
\begin{equation}
    \begin{aligned}\label{eq:h-k-egpp-rho-0}
        \phi_p(\vu^*, \vu_{k+1}) \leq \phi_p(\vu^*, \vu_k) - \frac{m_p}{4}\|\vub_k - \vu_k\|_p^2.
    \end{aligned}
\end{equation}
It follows that $\|\vub_k - \vu_k\|_p^2$ converges to zero as $k \to \infty.$ By the definition of $\vub_k$ and  Proposition~\ref{prop:step-to-grad}, $\frac{1}{2}\|\vub_k - \vu_k\|_p^2 = \frac{{a_k}^2}{2\beta^2}\|F(\vu_k)\|_{p^*}^2,$ and, so, $\|F(\vu_k)\|_{p^*}$ converges to zero as $k \to  \infty.$ Further, as $\phi_p(\vu^*, \vu_k) \leq \phi_p(\vu^*, \vu_0)< \infty$ and $\phi_p(\vu^*, \vu_k)\geq \frac{m_p}{2}\|\vu^* - \vu_k\|_p^2,$ $m_p>0,$ it follows that $\|\vu^* - \vu_k\|_p$ is bounded, and, thus, $\{\vu_k\}_{k\geq 0}$ is a bounded sequence. The proof that all accumulation points of $\{\vu_k\}_{k\geq 0}$ are in $\cu^*$ 
is standard and omitted (see the proof of Theorem~\ref{thm:convergence-of-eg+} for a similar argument). 

To bound $\frac{1}{k+1}\sum_{i=0}^k \|F(\vu_i)\|_{p^*}^2$, we  telescope the inequality from Eq.~\eqref{eq:h-k-egpp-rho-0} to get:
\begin{align*}
m_p\sum_{i=0}^k \|\vub_i - \vu_i\|_p^2 \leq 4(\phi_p(\vu^*, \vu_0) - \phi_p(\vu^*, \vu_{k+1}))\leq 4 \phi_p(\vu^*, \vu_0).    
\end{align*}
To complete the proof of this part, it remains to use that $\|\vub_i - \vu_i\|_p^2 = \frac{{a_k}^2}{\beta^2}\|F(\vu_i)\|_{p^*}^2$ (already argued above), the definitions of $a_k$ and $\beta,$ and $m_p = p-1.$ The bound on $\phi_p(\vu^*, \vu_0)$ follows from the definition of $\phi_p$ in this case. In particular, if we denote $\psi(\vu) = \frac{1}{2}\|\vu - \vu_0\|_p^2,$ then $\phi_p(\vu^*, \vu_0) = D_{\psi}(\vu^*, \vu_0).$ Using the definition of Bregman divergence and the fact that, for this choice of $\psi,$ we have $\|\nabla \psi(\vu)\|_{p^*} = \|\vu - \vu_0\|_p,$ $\forall \vu \in \rr^d,$ (see the last part of  Proposition~\ref{prop:step-to-grad}) it follows that:
\begin{align*}
    \phi_p(\vu^*, \vu_0) &= \frac{1}{2}\|\vu^* - \vu_0\|_p^2 - \frac{1}{2}\|\vu_0 - \vu_0\|_p^2 - \innp{\nabla_{\vu}\Big(\frac{1}{2}\|\vu - \vu_0\|_p^2\Big)\Big|_{\vu = \vu_0}, \vu^* - \vu_0}
    &= \frac{1}{2}\|\vu^* - \vu_0\|_p^2.
\end{align*}

\noindent\textbf{Proof of Part (ii).} In this case, $q = p$, $\phi_p(\vu, \vv) = \frac{1}{p}\|\vu - \vv\|_p^p,$ and $m_p = 1.$  Using Proposition~\ref{prop:step-to-grad}, $\|\vu_k - \vub_k\|_p^p = \frac{{a_k}^{p^*}}{\beta^{p^*}}\| F(\vu_k)\|_{p^*}^{p^*}$ and $\|\vu_{k+1} - \vu_k\|_p^p = {a_k}^{p^*}\|F(\vub_k)\|_{p^*}^{p^*}.$ Combining with Eq.~\eqref{eq:egpp-det-h_k}, we have:
\begin{equation}\label{eq:h_k-p>2}
    \begin{aligned}
        0 \leq \;&\frac{1}{p}\|\vu^* - \vu_k\|_p^p - \frac{1}{p}\|\vu^* - \vu_{k+1}\|_p^p + \frac{(\beta - 1){a_k}^{p^*}}{p}\|F(\vub_k)\|_{p^*}^{p^*}\\ 
        &+ \frac{(a_k \Lambda_k \gamma - \beta){a_k}^{p^*}}{{p}\beta^{p^*}}\|F(\vu_k)\|_{p^*}^{p^*} + \frac{a_k \Lambda_k/\gamma-\beta}{p}\|\vub_k - \vu_{k+1}\|_p^p + a_k \delta_k.
    \end{aligned}
\end{equation}
Now let $\gamma = 1,$ $\beta = \frac{1}{2},$ $\delta_k = \delta > 0$, and $a_k = \frac{1}{2\Lambda_k} = \frac{1}{2\Lambda} = a$. Then $a_k \Lambda_k\gamma-\beta = a_k \Lambda_k/\gamma-\beta = 0$ and Eq.~\eqref{eq:h_k-p>2} simplifies to:
\begin{equation}\notag
    \frac{{a}^{p^*}}{2p}\|F(\vub_k)\|_{p^*}^{p^*} \leq \frac{1}{p}\|\vu^* - \vu_k\|_p^p - \frac{1}{p}\|\vu^* - \vu_{k+1}\|_p^p + a{\delta}.
\end{equation}
 Telescoping the last inequality and then dividing it by $\frac{{a_k}^{p^*}(k+1)}{2p},$ we have:
\begin{equation}\label{eq:p>2-op-bnd}
    \frac{1}{k+1}\sum_{i=0}^k \|F(\vub_i)\|_{p^*}^{p^*} \leq \frac{2\|\vu^* - \vu_0\|_p^p}{a^{p^*}(k+1)} + \frac{2p \delta}{a^{p^* - 1}}. 
\end{equation}
Now, for \egpp~to be able to output a point $\vu$ with $\|F(\vu)\|_{p^*} \leq \epsilon,$ it suffices to show that for some choice of $\delta$ and $k$ we can make the right-hand side of Eq.~\eqref{eq:p>2-op-bnd} at most $\epsilon^{p^*}$. This is true because then \egpp~can output the point $\vub_i = \argmin_{0\leq i \leq k}\|F(\vub_i)\|_{p^*}.$ For stochastic setups, the guarantee would be in expectation, and \egpp~could output a point $\vub_i$ with $i$ chosen uniformly at random from $\{0,\dots, k\}$, similarly as discussed in the proof of Theorem~\ref{thm:convergence-of-eg+}. 

Observe first that, as $\Lambda = \big(\frac{p - 2}{p\delta}\big)^{\frac{p-2}{2}}L^{p/2}$ and $p^* = \frac{p}{p-1},$ we have that:
\begin{align*}
    \frac{\delta}{{a}^{p^* - 1}} &= \delta (2\Lambda)^{p^* -1} = \delta 2^{\frac{1}{p-1}}\Lambda^{\frac{1}{p-1}}\\
    &= 2^{\frac{1}{p-1}}\delta^{\frac{p}{2(p-1)}}\Big(\frac{p-2}{p}\Big)^{\frac{p-2}{2(p-1)}}L^{\frac{p}{2(p-1)}}.
\end{align*}
Setting $\frac{2p \delta}{{a}^{p^* - 1}} \leq \frac{\epsilon^{p^*}}{2}$, recalling that $p^* = \frac{p}{p-1}$, and rearranging, we have:
$$
    \delta^{\frac{p^*}{2}} \leq \frac{\epsilon^{p^*}}{2^{\frac{2p-1}{p}}p}\Big(\frac{p}{p-2}\Big)^{\frac{p-2}{2p}p^*} L^{-p^*/2}.
$$
Equivalently:
$$
    \delta \leq \frac{\epsilon^2}{L \cdot 2^{\frac{2(2p-1)}{p}}p^{\frac{2(p-1)}{p}} (\frac{p-2}{p})^{\frac{p-2}{p}}}.
$$
It can be verified numerically that  $(\frac{p-2}{p})^{\frac{p-2}{p}}$ is a constant between $\frac{1}{e}$ and $1,$ while it is clear that  $2^{\frac{2(2p-1)}{p}}p^{\frac{2(p-1)}{p}} = O(p^2)$ is a constant that only depends on $p$. Hence, it suffices to set $\delta = \frac{\epsilon^2}{C_p L},$ where $C_p = 2^{\frac{2(2p-1)}{p}}p^{\frac{2(p-1)}{p}}.$

It remains to bound the number of iterations $k$ so that $\frac{2\|\vu^* - \vu_0\|_p^p}{a^{p^*}(k+1)} \leq \frac{\epsilon^{p^*}}{2}.$ Equivalently, we need $k+1 \geq \frac{4\|\vu^* - \vu_0\|_p^p}{a^{p^*}\epsilon^{p^*}}$. Plugging $\delta = \frac{\epsilon^2}{C_p L}$ into the definition of $\Lambda,$ using that $p^* = \frac{p}{p-1},$ and simplifying, we have:
\begin{align*}
    a^{p^*} &= (2\Lambda)^{p^*} = 2^{\frac{p}{p-1}} \Big(\frac{p-2}{p\delta}\Big)^{\frac{p-2}{2}\cdot\frac{p}{p-1}}L^{\frac{p}{2}\cdot\frac{p}{p-1}}\\
    &= O_p\bigg(\Big(\frac{1}{\epsilon}\Big)^{\frac{p(p-2)}{p-1}}L^p\bigg).
\end{align*}
Thus, 
\begin{align*}
    k = O_p\bigg(\Big(\frac{1}{\epsilon}\Big)^{\frac{p(p-2)}{p-1} + \frac{p}{p-1}}L^p\|\vu^* - \vu_0\|_p^p\bigg) = O_p\bigg(\Big(\frac{L\|\vu^* - \vu_0\|_p}{\epsilon}\Big)^p\bigg),
\end{align*}
as claimed.
\end{proof}

\paragraph{Stochastic Oracle Access.} 
To obtain results for stochastic oracle access to $F,$ we only need to bound the terms $\mathcal{E}^s \defeq -a_k \innp{\vetab_k, \vub_k - \vu^*} - a_k\innp{\vetab_k - \veta_k, \vub_k - \vu_{k+1}}$ from Lemma~\ref{lemma:bound-on-hk-egpp} corresponding to the stochastic error in expectation, while for the rest of the analysis we can appeal to the results for the deterministic oracle access to $F.$ In the case of $p=2$, there is one additional term that appears in $h_k$ due to replacing $F(\vub_k)$ with $\tF(\vub_k).$ This term is simply equal to:
\begin{equation}
\begin{aligned}
    \frac{a_k\rho}{2}\ee[\|\tF(\vub_k)\|_2^2 - \|F(\vub_k)\|_2^2| \cfb_k]= \frac{a_k\rho}{2}\ee[\|F(\vub_k) + \vetab_k\|_2^2 - \|F(\vub_k)\|_2^2| \cfb_k] = \frac{a_k\rho}{2} {\bar{\sigma}_k}^2.
\end{aligned}
\end{equation}

We start by bounding the stochastic error $\mathcal{E}^s$ in expectation.

\lemmastocherr*
%
\begin{proof}
Let us start by bounding $-a_k \innp{\vetab_k, \vub_k - \vu^*}$ first. Conditioning on $\cfb_k$, $\vetab_k$ is independent of $\vub_k$ and $\vu^*$, and, thus: 
\begin{align*}
    \ee[-a_k \innp{\vetab_k, \vub_k - \vu^*}] = \ee\big[ \ee[-a_k \innp{\vetab_k, \vub_k - \vu^*}|\cfb_k] \big] = 0.
\end{align*}
The second term, $- a_k\innp{\vetab_k - \veta_k, \vub_k - \vu_{k+1}},$ can be bounded using H\"{older}'s inequality and Young's inequality as follows:
\begin{align*}
    \ee\big[- a_k\innp{\vetab_k - \veta_k, \vub_k - \vu_{k+1}}\big] &\leq \ee\big[a_k\|\vetab_k - \veta_k\|_{p^*}\| \vub_k - \vu_{k+1}\|_p\big]\\
    &\leq \ee\Big[\frac{{a_k}^{q^*}\|\vetab_k - \veta_k\|_{p^*}^{q^*}}{q^*\tau^{q^*}}\Big] + \ee\Big[\frac{\tau^q}{q}\| \vub_k - \vu_{k+1}\|_p^q\Big].
\end{align*}
It remains to bound $\ee\big[\|\vetab_k - \veta_k\|_{p^*}^{q^*}\big].$ Using triangle inequality, 
\begin{align*}
    \ee\big[\|\vetab_k - \veta_k\|_{p^*}^{q^*}\big] &\leq \ee\Big[\big(\|\vetab_k\|_{p^*} + \|\veta_k\|_{p^*}\big)^{q^*}\Big]\\
    &= \ee\Big[\big(\big(\|\vetab_k\|_{p^*} + \|\veta_k\|_{p^*}\big)^{2}\big)^{q^*/2}\Big]\\
    &\leq \Big(\ee\big[\big(\|\vetab_k\|_{p^*} + \|\veta_k\|_{p^*}\big)^{2}\big]\Big)^{q^*/2},
\end{align*}
where the last line is by Jensen's inequality, as $q^* \in (1, 2],$ and so $(\cdot)^{q^*/2}$ is concave. Using Young's inequality and linearity of expectation:
\begin{align*}
    \ee\big[\big(\|\vetab_k\|_{p^*} + \|\veta_k\|_{p^*}\big)^{2}\big] &\leq 2 \Big(\ee\big[\|\vetab_k\|_{p^*}^2\big] + \ee\big[\|\veta_k\|_{p^*}^2\big]\Big)\\
    &\leq 2({\sigma_k}^2 + {\bar{\sigma}_k}^2).
\end{align*}
Putting everything together:
\begin{align*}
    \ee\big[\|\vetab_k - \veta_k\|_{p^*}^{q^*}\big] \leq 2^{q^*/2}({\sigma_k}^2 + {\bar{\sigma}_k}^2)^{q^*/2} 
\end{align*}
and
\begin{align*}
\ee[\mathcal{E}^s] &= \ee\big[- a_k\innp{\vetab_k - \veta_k, \vub_k - \vu_{k+1}}\big]\\
&\leq \frac{2^{q^*/2}{a_k}^{q^*}({\sigma_k}^2 + {\bar{\sigma}_k}^2)^{q^*/2}}{q^*\tau^{q^*}} + \ee\Big[\frac{\tau^q}{q}\| \vub_k - \vu_{k+1}\|_p^q\Big],
\end{align*}
as claimed.
\end{proof}

We are now ready to bound the total oracle complexity of \egpp~(and its special case \egp), as follows.

\thmegppstoch*
\begin{proof}
Combining Lemmas~\ref{lemma:bound-on-hk-egpp} and \ref{lemma:stoch-err}, we have, $\forall k \geq 0$:
\begin{equation}\label{eq:hk-egpp-stoch}
    \begin{aligned}
       0 \leq \ee[ h_k] \leq&\; \frac{2^{q^*/2}{a_k}^{q^*}({\sigma_k}^2 + {\bar{\sigma}_k}^2)^{q^*/2}}{q^*\tau^{q^*}} + \ee\Big[\frac{\tau^q}{q}\| \vub_k - \vu_{k+1}\|_p^q\Big] + \frac{a_k \rho \bar{\sigma_k}^2}{2} + \ee\Big[\frac{a_k \rho}{2}\|\tF(\vub_k)\|_{p^*}^2\Big] \\
     &+ \ee\Big[\phi_p(\vu^*, \vu_k) - \phi_p(\vu^*, \vu_{k+1}) +  \frac{\beta - m_p}{q}\|\vu_{k+1} - \vu_k\|_p^q\Big]\\
     &+ \ee\Big[\frac{a_k \Lambda_k \gamma - \beta}{q}\|\vub_k - \vu_k\|_{p}^q + \frac{a_k \Lambda_k/\gamma - \beta m_p}{q}\|\vub_k - \vu_{k+1}\|_p^q + a_k \delta_k\Big],
    \end{aligned}
\end{equation}

\noindent\textbf{Proof of Part (i).} In this case, $q = 2$, $m_p = 1,$ $\delta = 0,$ $\Lambda_k = L,$ and $\phi_p(\vu^*, \vu) = \frac{1}{2}\|\vu^* - \vu\|_2^2,$ and, further, $\vu_{k+1} - \vu_k = -a_k F(\vub_k),$ so Eq.~\eqref{eq:hk-egpp-stoch} simplifies to
\begin{equation}\notag
    \begin{aligned}
       0 \leq \ee[ h_k] \leq&\; \frac{2{a_k}^2({\bar{\sigma_k}}^2 + {\bar{\sigma}_k}^2)}{2\tau^2}  + \frac{a_k \rho {\sigma_k}^2}{2} \\
     &+ \ee\Big[\frac{1}{2}\|\vu^* - \vu_k\|_2^2 - \frac{1}{2}\|\vu^* - \vu_{k+1}\|_2^2 +  \frac{{a_k}^2(\beta - 1) + a_k\rho}{2}\|\tF(\vub_k)\|_2^2\Big]\\
     &+ \ee\Big[\frac{a_k L \gamma - \beta}{2}\|\vub_k - \vu_k\|_{2}^2 + \frac{a_k L/\gamma - \beta + \tau^2}{2}\|\vub_k - \vu_{k+1}\|_2^2\Big],
    \end{aligned}
\end{equation}
Taking $\beta = \frac{1}{2},$ $\tau^2 = \frac{1}{4},$ $\gamma = \sqrt{2},$ and $a_k = \frac{1}{2\sqrt{2}L},$ and recalling that $\bar{\rho} = \frac{1}{4\sqrt{2}L},$ we have:
\begin{equation}\notag
    a_k (\bar{\rho} - \rho)\ee\big[\|\tF(\vub_k)\|_2^2\big] \leq \ee\big[\|\vu^* - \vu_k\|_2^2 - \|\vu^* - \vu_{k+1}\|_2^2\big] + 4{{a_k}^2({\sigma_k}^2 + {\bar{\sigma}_k)}^2}  + \frac{a_k \rho \bar{\sigma_k}^2}{2}.
\end{equation}
Telescoping the last inequality and dividing both sides by $a_k (\bar{\rho} - \rho)(k+1),$ we get:
\begin{equation}\notag
    \frac{1}{k+1}\sum_{i=0}^k \ee\big[\|\tF(\vub_i)\|_2^2\big] \leq \frac{2\sqrt{2}L\|\vu^* - \vu_0\|_2^2}{(k+1)(\bar{\rho} - \rho)} + \frac{\sqrt{2}\sum_{i=0}^k ({\sigma_i}^2 + {\bar{\sigma}_i}^2)}{L(\bar{\rho} - \rho)(k+1)} + \frac{\rho \sum_{i=0}^k {\bar{\sigma}_i}^2}{2(k+1)(\bar{\rho} - \rho)}.
\end{equation}
In particular, if variance of a single sample of $\tF$ evaluated at an arbitrary point is $\sigma^2$ and we take $n$ samples of $\tF$ in each iteration, then:
\begin{equation}\notag
    \frac{1}{k+1}\sum_{i=0}^k \ee\big[\|\tF(\vub_i)\|_2^2\big] \leq \frac{2\sqrt{2}L\|\vu^* - \vu_0\|_2^2}{(k+1)(\bar{\rho} - \rho)} + \frac{\sigma^2(4\sqrt{2}/L + \rho)}{2n(\bar{\rho} - \rho)}.
\end{equation}
To finish the proof of this part, we require that both terms on the right-hand side of the last inequality are bounded by $\frac{\epsilon^2}{2}.$ For the first term, this leads to:
$$
    k = \left\lceil \frac{4\sqrt{2}L\|\vu^* - \vu_0\|_2^2}{\epsilon^2(\bar{\rho} - \rho)} - 1\right\rceil = O\Big(\frac{L\|\vu^* - \vu_0\|_2^2}{\epsilon^2(\bar{\rho} - \rho)}\Big).
$$
For the second term, the bound is:
$$
    n = \left\lceil  \frac{2\sigma^2(4\sqrt{2}/L + \rho)}{\epsilon^2(\bar{\rho} - \rho)} \right \rceil = O\Big(\frac{\sigma^2}{L\epsilon^2(\bar{\rho} - \rho)}\Big).
$$
Thus, the total number of required oracle queries to $\tF$ is bounded by:
$$
    k(1 + n) = O\Big(\frac{L\|\vu^* - \vu_0\|_2^2}{\epsilon^2(\bar{\rho} - \rho)} \Big(1 + \frac{\sigma^2}{L\epsilon^2(\bar{\rho} - \rho)}\Big)\Big).
$$
As discussed before, $\vub_i$ with $i$ chosen uniformly at random from $\{0, \dots, k\}$ will satisfy $\|F(\vub_i)\|_2 \leq \epsilon$ in expectation.

\noindent\textbf{Proof of Part (ii).} In this case, $q = 2$, $m_p = p-1,$ $\delta = 0,$ $\Lambda_k = L,$ and $\rho = 0.$ Thus, Eq.~\eqref{eq:hk-egpp-stoch} simplifies to:
\begin{equation}\notag
    \begin{aligned}
       0 \leq \ee[ h_k] \leq&\; \frac{2{a_k}^2({\sigma_k}^2 + {\bar{\sigma}_k}^2)}{2\tau^2} \\
     &+ \ee\Big[\phi_p(\vu^*, \vu_k) - \phi_p(\vu^*, \vu_{k+1}) +  \frac{\beta - m_p}{2}\|\vu_{k+1} - \vu_k\|_p^2\Big]\\
     &+ \ee\Big[\frac{a_k L \gamma - \beta}{2}\|\vub_k - \vu_k\|_{p}^2 + \frac{a_k L/\gamma - \beta m_p + \tau^2}{2}\|\vub_k - \vu_{k+1}\|_p^2\Big].
    \end{aligned}
\end{equation}
In this case, the same choices for $a_k$ and $\beta$ as in the deterministic case suffice. In particular, let $a_k = \frac{{m_p}^{3/2}}{2L},$ $\beta = m_p$, $\gamma = \frac{1}{\sqrt{m_p}},$ and $\tau^2 = \frac{{m_p}^2}{2}$. Then, using that, from Proposition~\ref{prop:step-to-grad}, $\frac{1}{2}\|\vub_k - \vu_k\|_p^2 = \frac{{a_k}^2}{2\beta^2}\|\tF(\vu_k)\|_{p^*}^2,$ we have
\begin{equation}\notag
    \frac{{a_k}^2m_p}{4\beta^2}\ee\big[\|\tF(\vu_k)\|_{p^*}^2\big] \leq \ee\Big[\phi_p(\vu^*, \vu_k) - \phi_p(\vu^*, \vu_{k+1})\Big] + \frac{{a_k}^2({\sigma_k}^2 + {\bar{\sigma}_k}^2)}{\tau}.
\end{equation}
Telescoping the last inequality and dividing both sides by $(k+1)\frac{{a_k}^2m_p}{4\beta^2},$ we have:
\begin{equation}\label{eq:p<2-stoch}
    \frac{1}{k+1}\sum_{i=0}^k \ee\big[\|\tF(\vu_i)\|_{p^*}^2\big] \leq \frac{16 L^2\phi_p(\vu^*, \vu_0)}{(k+1){m_p}^2} + \frac{8\sum_{i=0}^k ({\sigma_i}^2 + {\bar{\sigma_i}}^2)}{(k+1)m_p}.
\end{equation}
Now let ${\sigma_i}^2 = {\bar{\sigma}_i}^2 = \sigma^2/n$, where $\sigma^2$ is the variance of a single sample of $\tF$ and $n$ is the number of samples taken per iteration. Then, similarly as in Part (i), to bound the total number of samples, it suffices to bound each term on the right-hand side of Eq.~\eqref{eq:p<2-stoch} by $\frac{\epsilon^2}{2}.$ The first term was already bounded in Theorem~\ref{thm:egpp_deterministic}, and it leads to:
$$
    k = O\Big(\frac{L^2\|\vu^* - \vu_0\|_p^2}{{m_p}^2\epsilon^2}\Big).
$$
For the second term, it suffices that:
$$
    n = O\Big(\frac{\sigma^2}{m_p \epsilon^2}\Big),
$$
and the bound on the total number of samples follows.

\noindent\textbf{Proof of Part (iii).} In this case, $q = p,$ $m_p = 1,$ $\rho = 0,$ $\phi_p(\vu^*, \vu) = \frac{1}{p}\|\vu^* - \vu\|_p^p$, and we take $\delta_k = \delta > 0,$ $\Lambda_k = \Lambda = \big(\frac{p-2}{p\delta}\big)^{\frac{p-2}{2}}L^{\frac{p}{2}}.$ Eq.~\eqref{eq:hk-egpp-stoch} now simplifies to:
\begin{equation}\label{eq:hk-egpp-stoch-p>1}
    \begin{aligned}
       0 \leq \ee[ h_k] \leq&\; \frac{2^{p^*/2}{a_k}^{p^*}({\sigma_k}^2 + {\bar{\sigma}_k}^2)^{p^*/2}}{p^*\tau^{p^*}}  \\
     &+ \ee\Big[\frac{1}{p}\|\vu^* - \vu_k\|_p^p - \frac{1}{p}\|\vu^* - \vu_{k+1}\|_p^p +  \frac{\beta - 1}{p}\|\vu_{k+1} - \vu_k\|_p^p\Big]\\
     &+ \ee\Big[\frac{a_k \Lambda \gamma - \beta}{p}\|\vub_k - \vu_k\|_{p}^p + \frac{a_k \Lambda/\gamma + \tau^p - \beta }{p}\|\vub_k - \vu_{k+1}\|_p^p + a_k \delta\Big].
    \end{aligned}
\end{equation}
Recall that, by Proposition~\ref{prop:step-to-grad}, $\frac{1}{p}\|\vu_{k+1} - \vu_k\|_p^p = \frac{{a}^{p^*}}{p}\|\tF(\vub_k)\|_{p^*}^{p^*}.$ Let $\beta = \frac{1}{2},$ $a_k = a = \frac{1}{4\Lambda},$ $\tau^p = \frac{1}{4},$ and $\gamma = 1.$ Then $\beta - 1 = - \frac{1}{2},$ $a_k\Lambda \gamma - \beta = - \frac{1}{4}<0,$ and $a_k \Lambda/\gamma + \tau^p - \beta = 0,$ and Eq.~\eqref{eq:hk-egpp-stoch-p>1} leads to:
\begin{equation}\notag
    \frac{a^{p^*}}{2p}\ee\big[\|\tF(\vub_k)\|_{p^*}^{p^*}\big] \leq \ee\Big[\frac{1}{p}\|\vu^* - \vu_k\|_p^p - \frac{1}{p}\|\vu^* - \vu_{k+1}\|_p^p\Big] + \frac{2^{\frac{4+p}{2(p-1)}}{a}^{p^*}({\sigma_k}^2 + {\bar{\sigma}_k}^2)^{p^*/2}}{p^*} + a \delta.
\end{equation}
Telescoping the last inequality and then dividing both sides by $\frac{a^{p^*}}{2p}(k+1),$ we have:
\begin{equation}\notag
     \frac{1}{k+1}\sum_{i=0}^k \ee\big[\|\tF(\vub_i)\|_{p^*}^{p^*}\big] \leq \frac{2\|\vu^* - \vu_0\|_p^p}{a^{p^*}(k+1)} + \frac{2^{\frac{3p+2}{2(p-1)}}p \sum_{i=0}^k({\sigma_i}^2 + {\bar{\sigma}_i}^2)^{p^*/2}}{p^*(k+1)} + \frac{2p\delta}{{a}^{p^*} - 1}.
\end{equation}
Now let $\sigma^2$ be the variance of a single sample of $\tF$ and suppose that in each iteration we take $n$ samples to estimate $F(\vub_i)$ and $F(\vu_i).$ Then ${\sigma_i}^2 = {\bar{\sigma_i}}^2 = \frac{\sigma^2}{n},$ and the last equation simplifies to
\begin{equation}\notag
     \frac{1}{k+1}\sum_{i=0}^k \ee\big[\|\tF(\vub_i)\|_{p^*}^{p^*}\big] \leq \frac{2\|\vu^* - \vu_0\|_p^p}{a^{p^*}(k+1)} + \frac{2^{\frac{p+2}{p-1}}p \sigma^{p^*}}{p^*n} + \frac{2p\delta}{{a}^{p^*} - 1}.
\end{equation}
To complete the proof, similarly as before, it suffices to show that we can choose $k$ and $n$ so that $\frac{2p\|\vu^* - \vu_0\|_p^p}{a^{p^*}(k+1)} + \frac{2p\delta}{{a}^{p^*} - 1} \leq \frac{\epsilon^{p^*}}{2}$ and $\frac{2^{\frac{p+2}{p-1}}p \sigma^{p^*}}{p^*n} \leq \frac{\epsilon^{p^*}}{2}$. For the former, following the same argument as in the proof of Theorem~\ref{thm:egpp_deterministic}, Part (ii), it suffices to choose $\delta = O_p(\frac{\epsilon^2}{L})$, which leads to:
$$
    k = O_p\bigg(\Big(\frac{L\|\vu^* - \vu_0\|_p}{\epsilon}\Big)^p\bigg).
$$
For the latter, it suffices to choose:
$$
    n = \frac{2^{\frac{p+2}{p-1} + 1}p \sigma^{p^*}}{p^*\epsilon^{p^*}} = O\bigg(\frac{p \sigma^{p^*}}{\epsilon^{p^*}}\bigg). 
$$
The total number of queries to the stochastic oracle is then bounded by $k(1+n).$
\end{proof}
\fi
\end{document}